\documentclass[10pt]{amsart}
\usepackage[a4paper,margin=22mm]{geometry}
\usepackage{amssymb}
\usepackage{booktabs}
\usepackage{array}
\usepackage{xcolor}
\usepackage[protrusion=true,expansion=false]{microtype}
\usepackage[hidelinks,bookmarks=false]{hyperref}
\newtheorem{theorem}{Theorem}[section]
\newtheorem{proposition}[theorem]{Proposition}
\newtheorem{lemma}[theorem]{Lemma}
\theoremstyle{definition}
\newtheorem{definition}[theorem]{Definition}
\newtheorem{certificate}[theorem]{Proposition}
\theoremstyle{remark}

\newcommand{\dd}{\,\mathrm d}
\newcommand{\cC}{\mathcal C}

\title[Beyond Liu's threshold]{Beyond Liu's $0.382709$ threshold\\
for the union-closed sets conjecture}
\author{Simone Costa}
\address{DICATAM, Universit\`a degli Studi di Brescia, Via Branze 43, 25123 Brescia, Italy}
\email{simone.costa@unibs.it}
\author{Ankan Sadhu}
\address{Government College of Engineering and Ceramic Technology, Kolkata, India}
\email{ankan-sadhu-br24-3003@gcect.ac.in}
\date{}
\hypersetup{
 pdftitle={Beyond Liu's 0.382709 threshold for the union-closed sets conjecture},
 pdfauthor={Simone Costa and Ankan Sadhu},
 pdfsubject={Union-closed sets conjecture, entropy inequalities and rigorous interval certificates},
 pdfkeywords={union-closed sets, entropy method, conditionally IID coupling, Liu threshold, interval arithmetic, MPFR}
}

\begin{document}
\begin{abstract}

The union-closed sets conjecture asks whether every finite union-closed family containing a nonempty set has an element contained in at least half of its members.  Let $c_{\mathrm{UC}}$ denote the largest universal lower bound on this proportion.  The best previously proved lower bound is due to Liu and is above $0.3823455$; his own numerical optimization suggested a stronger threshold $c_{\mathrm L}>0.382709087918735$.

The main argument of this paper is to use different protocols (suitable families of probability measures) within Liu's framework.  In particular, we combine the independent protocol with Liu's Example~5 to prove the conjectured bound
\[
 c_{\mathrm{UC}}\ge c_{\mathrm L}.
\]
We then vary the parameter in Example~5 and optimize the choice of the two coefficients in the combination to obtain
\[
 c_{\mathrm{UC}}\ge0.3828852549667978.
\]
We also show that the inequality used in this proof cannot give a threshold larger than $0.3828852599667\ldots$.

\end{abstract}

\maketitle

\section{Introduction}

A finite family $\mathcal F$ of sets is \emph{union-closed} if $A\cup B\in\mathcal F$ whenever $A,B\in\mathcal F$.  The union-closed sets conjecture, attributed to Frankl, asserts that every finite union-closed family containing a nonempty set has an element that belongs to at least half of its members; see the survey of Bruhn and Schaudt~\cite{bruhn-schaudt}.  For such a family, define
\[
 \operatorname{freq}(\mathcal F):=
 \max_{i\in\bigcup\mathcal F}
 \frac{|\{A\in\mathcal F:i\in A\}|}{|\mathcal F|}
\]
and
\[
 c_{\mathrm{UC}}:=\inf_{\mathcal F}\operatorname{freq}(\mathcal F),
\]
where the infimum ranges over all finite union-closed families containing a nonempty set.  The family $\{\varnothing,\{1\}\}$ gives $c_{\mathrm{UC}}\le1/2$, so the conjecture is equivalent to $c_{\mathrm{UC}}=1/2$.

Recent progress on $c_{\mathrm{UC}}$ is based on entropy inequalities.  Gilmer~\cite{gilmer} proved the first absolute constant.  The sharp one-coordinate i.i.d.\ inequality proved independently by Chase--Lovett~\cite{chase-lovett}, Alweiss--Huang--Sellke~\cite{ahs}, and Sawin~\cite{sawin} yields
\[
 c_{\mathrm{UC}}\ge\frac{3-\sqrt5}{2}>0.381966.
\]
Sawin showed that dependence gives a strict improvement, and Yu~\cite{yu} and Cambie~\cite{cambie} made this quantitative, obtaining a value about $0.3823455$.  Liu~\cite{liu} introduced a broader class of conditionally i.i.d.\ couplings and analytically improved this value.  His subsequent finite-dimensional numerical optimization suggested a stronger threshold near $0.382709$.

The main argument of this paper is to use different protocols together within Liu's framework.  This general strategy of combining complementary estimates is also used by Della Fiore--Costa--Dalai~\cite[Section~5]{della-fiore-costa-dalai} in the study of $(b,k)$-hashing.  We combine the independent protocol with Liu's Example~5 to prove Liu's conjectured threshold.  We then vary the parameter in Example~5 and choose the two coefficients in the combination to obtain the stronger bound below.

Throughout, $\log$ denotes the natural logarithm and $H$ denotes the binary entropy function
\[
 H(x):=-x\log x-(1-x)\log(1-x),\qquad 0\le x\le1,
\]
{with the convention that $0\log0=0$.}

Let
\[
 p(x):=x^4-2x^3+3x^2-1.
\]
Since
\[
 p'(x)=2x(2x^2-3x+3)>0\qquad(0<x\le1),
\]
the polynomial $p$ has a unique zero $x_*\in(0,1)$.  Define
\[
 c_{\mathrm L}:=
 1-\frac{x_*H(x_*)}{H(x_*^2)}.
\]
This is the value corresponding to Liu's optimizer.  {Our first theorem proves this conjectured bound.}

\begin{theorem}
\label{thm:liu-bound}
\[
 c_{\mathrm{UC}}\ge c_{\mathrm L}>0.382709087918735.
\]
\end{theorem}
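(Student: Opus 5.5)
We follow the standard entropy framework of Gilmer, as refined by Liu. Let $\mathcal F$ be union-closed with $\operatorname{freq}(\mathcal F)<c$, and let $A$ be uniform on $\mathcal F$, viewed as a vector $A\in\{0,1\}^n$. For any coupling $(A,B)$ in which $A$ and $B$ both have the law of $A$, the set $A\cup B$ lies in $\mathcal F$. Hence $H(A\cup B)\le\log|\mathcal F|=H(A)$. By the chain rule, it therefore suffices to exhibit, for every $c<c_{\mathrm L}$, a family of couplings for which the coordinatewise conditional entropy increment is strictly positive. The increment in question is
\[
 H(A_i\cup B_i\mid A_{<i},B_{<i})-\tfrac12\bigl(H(A_i\mid A_{<i})+H(B_i\mid B_{<i})\bigr).
\]
Liu's conditionally i.i.d.\ framework reduces this to a one-dimensional inequality. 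For each conditioning state, with marginal parameter $x$ (the conditional probability that $i\notin A$), a protocol $\mu$ produces a lower bound of the form $\mathbb E_\mu H(\cdot)-\mathbb E H(x)\ge0$. This bound must be verified for all probability measures on $[0,1]$ whose mean of $1-x$ is below $c$.

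\textbf{Combining the two protocols.} The key new step is that the independent protocol and Liu's Example~5 can be used simultaneously. Since $H(A\cup B)\le H(A)$ holds for each coupling separately, any convex combination $\lambda\,\Delta_{\mathrm{ind}}+(1-\lambda)\,\Delta_{\mathrm{Ex5}}$ of the two resulting functionals is also forced to be $\le 0$. It therefore suffices to show that some combination is positive. In the independent protocol the local gain is $H(x^2)-xH(x)$ type, integrated against the law $\nu$ of $x$. Example~5 contributes a term that compensates precisely where the independent gain is small, namely near the two-point extremal measures. So I would reduce the problem, as in Chase--Lovett/Sawin, to showing
\[
 \inf_{\nu:\ \mathbb E_\nu[1-x]\le c}\Bigl[\lambda\,\Phi_{\mathrm{ind}}(\nu)+(1-\lambda)\,\Phi_{\mathrm{Ex5}}(\nu)\Bigr]>0
\]
for all $c<c_{\mathrm L}$.

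\textbf{Reduction to finitely supported measures.} For each fixed $\nu$ the functional is linear in $\nu$, or bilinear with Example~5's pairing, and the constraint is a single linear moment. A Carathéodory-type argument in the linear case, or a direct analysis of extremizers in the bilinear case, therefore reduces the infimum to measures supported on at most two points, one of which is $x=1$ (empty-type states). This yields an explicit two-variable function of the mass and the location $x$. Optimizing over the mass gives the threshold expression $1-xH(x)/H(x^2)$. Stationarity in $x$, once the Example~5 correction cancels the obstruction that previously pinned the optimum at $(3-\sqrt5)/2$, should produce exactly $p(x)=0$, so the extremal point is $x_*$ and the threshold is $c_{\mathrm L}$. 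The numerical inequality $c_{\mathrm L}>0.382709087918735$ then follows by isolating $x_*$ in a rational interval and evaluating with interval arithmetic, using the monotonicity of $p$ shown above.

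\textbf{Main obstacle.} The hard step is choosing $\lambda$ so that the combined functional is nonnegative globally and not merely at the critical two-point configuration. I would first fix $\lambda$ by the tangency condition at $x_*$. I would then verify the resulting one- or two-variable inequality on a compact domain by splitting it into boxes and applying certified interval bounds, with separate Taylor estimates near the endpoints $x=0,1$ where $H$ is singular and where boxes degenerate. Near $x_*$ itself the inequality is tight, so there I would use a second-order expansion instead of boxes.
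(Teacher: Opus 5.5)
\textbf{There is a genuine gap, and it is in your ``Reduction to finitely supported measures'' step.}

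First, the Example~5 term in Liu's criterion is not a bilinear functional of the marginal. In complemented coordinates it is $\inf_{\rho\in\cC_3(P)}\iint H(r(x,y))\,\rho(\dd x,\dd y)$. This is an infimum over all conditionally i.i.d.\ couplings, i.e.\ over the closed convex hull of the measures $Q^{\otimes2}$ with the prescribed marginal, so it is the lower convex envelope of $Q\mapsto\iint H(r)\,\dd Q\,\dd Q$. You cannot evaluate it by analysing extremal marginals; you need a convex minorant valid for every such $\rho$, and nothing in your plan supplies one.

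The paper builds this minorant in two steps. The first is the pointwise product bound $H(r(x,y))^2\ge H(g(x))H(g(y))$ (Lemma~\ref{lem:R}). This is a nontrivial log-concavity statement: when $g(y)<1/2$ it is proved from the sign of the mixed derivative of $\log H(r)$, and otherwise from the monotonicity of $uH'(u)/H(u)$. The second is the positive-correlation property of $\cC_3$ (Lemma~\ref{lem:positive-correlation}, which is Jensen applied to mixtures of $Q^{\otimes2}$). Together they give $\iint H(r)\,\dd\rho\ge\bigl(\int\sqrt{H(g)}\,\dd P\bigr)^2$ for every admissible $\rho$.

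Second, even after this step the functional is quadratic in $P$. The set $\{P: m(P)\ge 1-c\}$ has two-atom extreme points, but a non-concave quadratic functional need not attain its minimum there. Copositivity of a kernel is not decided by two-point measures, since it is not decided by $2\times2$ principal submatrices. So checking the family $(1-\eta)\delta_1+\eta\delta_x$ would not prove the inequality for all $\mu$.

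The paper avoids any reduction of measures. It replaces the constant $\gamma$ by the $P$-dependent factor $m(P)/m_*\ge(1-c)/m_*$ and uses
\[
 m(P)\int H\,\dd P=\iint\tfrac12\bigl(xH(y)+yH(x)\bigr)\,P(\dd x)P(\dd y).
\]
Everything then becomes one symmetric kernel, and it suffices to prove the pointwise two-variable inequality of Lemma~\ref{lem:S}. That is the inequality the paper certifies, using small-$t$, edge and tail estimates, mean-value boxes, and strict convexity of $F$ near the tangency point $(t_*,0)$.

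Relatedly, your account of where $c_{\mathrm L}$ comes from is off. The tight configuration is the Dirac mass $\delta_{x_*}$, not a two-point measure with an atom at $1$. The condition $p(x_*)=0$ is exactly $x_*^2+g(x_*)=1$, hence $H(g(x_*))=H(x_*^2)$. So at $\delta_{x_*}$ the combined inequality reads $m_*H(x_*^2)\ge x_*H(x_*)$ whatever the weights. This is the origin of $m_*$, not a stationarity computation. Your choice of the weights by tangency at $x_*$ does match the paper's choice of $\alpha,\beta$, and your treatment of the numerical inequality $c_{\mathrm L}>0.382709087918735$ by interval arithmetic is fine.
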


Allowing the parameter in Example~5 to vary in the combined construction gives our main quantitative improvement.

\begin{theorem}
\label{thm:main}
\[
 c_{\mathrm{UC}}\ge0.3828852549667978.
\]
\end{theorem}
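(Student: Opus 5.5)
The plan is to work inside Liu's entropy framework. Suppose $\mathcal F$ is union-closed and every element has frequency below $c$. Let $A$ be uniform on $\mathcal F$. For any coupling $(A,B)$ with both marginals uniform on $\mathcal F$, the inequality $H(A\cup B)\le\log|\mathcal F|=H(A)$ holds. The entropy method lower-bounds $H(A\cup B)-H(A)$, coordinate by coordinate, through a chain rule in which each coordinate contributes a function of the conditional marginal probabilities $p_i=\Pr[i\in A\mid A_{<i}]$. For a protocol $\pi$, a family of conditionally i.i.d. couplings indexed by the conditional law, Liu's framework reduces the contradiction to a pointwise inequality
\[
 g_\pi(\mu)\ \ge\ \lambda_\pi\, \mathbb E_\mu[H(p)] \qquad\text{whenever } \mathbb E_\mu[p]\le c,
\]
for all probability measures $\mu$ on $[0,1]$. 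Here $g_\pi(\mu)$ is the entropy of the OR of two coupled coordinates under protocol $\pi$, averaged suitably.

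The new idea is that a convex combination of protocols is again a valid coupling: with probability $t$ use the independent protocol, and with probability $1-t$ use Liu's Example~5 with parameter $s$. The left side is concave in the mixture of couplings, since mixing only increases conditional entropy. So we may replace the requirement by
\[
 t\,g_{\mathrm{ind}}(\mu)+(1-t)\,g_{5,s}(\mu)-\mathbb E_\mu[H(p)]\ \ge\ 0 .
\]
More flexibly, we can use the two estimates $H(A\cup B)\ge\ldots$ separately with coefficients $\alpha,\beta\ge0$, $\alpha+\beta=1$, so that each protocol's deficiency is compensated by the other.

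The proof then proceeds in the following steps.

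\begin{enumerate}
\item Write out the per-coordinate functional for the independent protocol, $\mathbb E_{\mu\otimes\mu}H(1-(1-p)(1-q))$, and for Example~5 at a general parameter $s$.
\item Use Liu-type reductions, namely convexity and the support-size arguments used for the i.i.d.\ case, to show that the worst $\mu$ is supported on at most two or three points, one of which is $0$ or $1$.
\item Reduce the combined inequality, for fixed $(s,\alpha)$ and the target $c=0.3828852549667978$, to the nonnegativity of an explicit function on a compact box of low dimension.
\item Choose $(s,\alpha)$ by numerical optimization, then certify nonnegativity rigorously by interval arithmetic with MPFR on a subdivision of the box.
\item Near points where the inequality is tight, replace the interval bisection by a local Taylor or monotonicity argument. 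The reason is that the margin $0.38288526-0.3828852549\approx5\cdot10^{-9}$ is tiny, and naive intervals will not close there.
\end{enumerate}

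I expect the main obstacle to be the reduction in step 2 together with the certification in step 5. The combined functional is no longer a single i.i.d. expression, so the usual two-point reduction must be re-proved for the mixture. It is also not clear how to handle the tight boundary region, especially $p\to0$, where $H$ has infinite slope. There I would first try asymptotic expansions to dispose of a neighbourhood of the degenerate endpoints analytically. Only after that would I run the interval certificate on the remaining compact region, with the optimized parameters rounded to rationals.
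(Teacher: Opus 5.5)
Your outer frame matches the paper: Liu's criterion applied to the independent protocol and Example~5 at a tuned parameter, with two optimized weights, followed by an interval certificate and a local convexity argument at the tangency point. The gap is in Steps~1--3, and that is where the paper's actual ideas are.

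First, for Example~5 the quantity you write as $g_{5,s}(\mu)$ is not a function of $\mu$. The joint law of the two conditional probabilities is an unknown element of $\cC_3(\mu)$, so the criterion needs a lower bound for $\inf_{\rho\in\cC_3(\mu)}\iint H(\Pi_{s,t}(0,0))\,\rho(\dd s,\dd t)$, and you never say how this infimum is controlled. The paper handles it with two ingredients:
\begin{itemize}
\item the pointwise product bound $H(r_{\ell_0}(x,y))\ge\sqrt{H(g_{\ell_0}(x))H(g_{\ell_0}(y))}$ of Lemma~\ref{lem:tuned-product}, proved through the sign of the mixed derivative of $\log H(r_{\ell_0})$;
\item the Jensen-type Lemma~\ref{lem:positive-correlation}.
\end{itemize}
Together they give $\iint H(r_{\ell_0})\,\dd\rho\ge\bigl(\int\psi\,\dd P\bigr)^2$ with $\psi=\sqrt{H\circ g_{\ell_0}}$, uniformly in $\rho\in\cC_3(P)$.

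Second, Step~2 (the worst $\mu$ has two or three support points) is only asserted. As you note yourself, it would have to be re-proved for the mixed functional, and you give no mechanism for doing so. It is not even clear that it is true once the Example~5 term is an infimum over couplings that depend on $\mu$.

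The paper needs no support reduction at all. Because $m(P)\ge1-c>m_0$, it suffices to prove the inequality with right-hand side
\[
 \frac{m(P)}{m_0}\int H\,\dd P=\frac1{2m_0}\iint\bigl(xH(y)+yH(x)\bigr)\,P(\dd x)P(\dd y),
\]
which is the Boppana/Chase--Lovett linearization. After this step every term is an integral against $P\otimes P$, including $\bigl(\int\psi\,\dd P\bigr)^2=\iint\psi(x)\psi(y)\,P(\dd x)P(\dd y)$. Hence the measure inequality follows from the single pointwise two-variable inequality of Lemma~\ref{lem:tuned-second} on $[0,1]^2$.

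The linearization also yields the strict constant $\gamma=(1-c)/m_0>1$ that Liu's criterion requires. Your version, with $-\mathbb E_\mu[H(p)]\ge0$, lacks this.

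Your Steps~4--5 are then essentially what the paper does for that two-variable function: analytic treatment of the degenerate regimes, an interval cover of the compact remainder, and a local Hessian argument near the contact point.
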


{Section~7 shows that the inequality used in the proof of Theorem~\ref{thm:liu-bound} cannot give a threshold larger than
\[
 \widehat c=0.3828852599667\ldots.
\]
Thus the bound in Theorem~\ref{thm:main} is within $5.1\cdot10^{-9}$ of this upper limit.}

\medskip
\noindent\textbf{Roadmap.}
Section~2 states Liu's criterion, defines the probability measures associated with his conditionally i.i.d.\ protocols, and introduces the parameterized form of Example~5.  Section~3 proves Theorem~\ref{thm:liu-bound} from two inequalities in two real variables, established in Sections~4 and~5.  {Section~6 proves Theorem~\ref{thm:main} by varying the parameter in Example~5 within the combined construction.  Section~7 studies limitations of the product estimates used in these arguments.}  Appendix~A states every computer-assisted assertion used in the proofs.  The computational supplement, together with the complete ancillary package of verification programs and outputs, is available in the Zenodo record associated with this work~\cite{joint-zenodo}.

\section{Liu's criterion and Liu's Example~5}

The goal of this section is to state Liu's one-coordinate criterion, record the positive-correlation property of conditionally i.i.d.\ induced laws, and introduce the parameterized form of Example~5 used later.

A \emph{coupling} of probability measures $\nu_1$ and $\nu_2$ on $[0,1]$ is a probability measure $\rho$ on $[0,1]^2$ whose first marginal is $\nu_1$ and whose second marginal is $\nu_2$.  Write $\nu_1\otimes\nu_2$ for the product measure and $\nu^{\otimes2}:=\nu\otimes\nu$.  For $s\in[0,1]$, $\operatorname{Ber}(s)$ denotes the Bernoulli distribution on $\{0,1\}$ with probability $s$ at $1$.

\begin{definition}
Following Liu~\cite[Definition~1]{liu}, a \emph{protocol} is a family
\[
 \Pi=(\Pi_{s,t})_{(s,t)\in[0,1]^2}
\]
such that $\Pi_{s,t}$ is a coupling of $\operatorname{Ber}(s)$ and $\operatorname{Ber}(t)$.
\end{definition}

In Liu's sequential construction~\cite[Definition~1]{liu}, a protocol is applied coordinate by coordinate to generate two copies of a prescribed $\{0,1\}^n$-valued random vector.  At a fixed coordinate, the conditional probabilities of the two output bits have the same marginal distribution, say $\mu$, and their joint distribution is the induced law at that coordinate.  Conditional on values $s,t$, the probability that the union bit equals $0$ is $\Pi_{s,t}(0,0)$.

The following is Proposition~3 of Liu~\cite{liu}, written with natural-logarithm entropy.

\begin{proposition}[Liu]
\label{prop:liu-criterion}
Let $c>0$.  For $j=1,\ldots,N$, let $\Pi^{(j)}$ be a protocol and let $w_j\ge0$ with $\sum_jw_j=1$.  For every probability measure $\mu$ on $[0,1]$, let $\mathcal K_j(\mu)$ be a nonempty set of couplings of $\mu$ with itself containing every induced law produced by $\Pi^{(j)}$ with common marginal $\mu$.

Suppose that there is $\gamma>1$ such that, for every probability measure $\mu$ on $[0,1]$ satisfying
\[
 \int s\,\mu(\dd s)\le c,
\]
one has
\[
 \sum_{j=1}^N w_j
 \inf_{\rho\in\mathcal K_j(\mu)}
 \iint H\bigl(\Pi^{(j)}_{s,t}(0,0)\bigr)\,\rho(\dd s,\dd t)
 \ge \gamma\int H(s)\,\mu(\dd s).
\]
Then $c\le c_{\mathrm{UC}}$.
\end{proposition}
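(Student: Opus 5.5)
The plan is to re-derive Liu's criterion (Proposition~\ref{prop:liu-criterion}) by Gilmer's contradiction argument, using a mixture of the protocols in place of the single i.i.d.\ coupling.

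\textbf{Step 1 (setup).} Suppose $\mathcal F\subseteq 2^{[n]}$ is union-closed, contains a nonempty set, and every element has frequency $<c$. Let $A$ be uniform on $\mathcal F$; then $H(A)=\log|\mathcal F|>0$, and every coordinate satisfies $\Pr(A_i=1)<c$.

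\textbf{Step 2 (the coupling).} Choose $J$ with $\Pr(J=j)=w_j$. Given $J=j$, generate two copies $A,B$ of $A$ sequentially with protocol $\Pi^{(j)}$, as in Liu's Definition~1. At coordinate $i$, set $s=\Pr(A_i=1\mid A_{<i})$ and $t=\Pr(B_i=1\mid B_{<i})$. The protocol draws $(A_i,B_i)$ from $\Pi^{(j)}_{s,t}$, so the marginals stay correct. Since $A\cup B\in\mathcal F$, we get $H(A\cup B)\le\log|\mathcal F|=H(A)$ for each $j$.

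\textbf{Step 3 (chain rule).} By the chain rule and the fact that conditioning reduces entropy,
\[
H(A\cup B\mid J=j)\ge\sum_i H\bigl((A\cup B)_i\mid A_{<i},B_{<i},J=j\bigr)=\sum_i\mathbb E\,H\bigl(\Pi^{(j)}_{s_i,t_i}(0,0)\bigr).
\]
The pair $(s_i,t_i)$ is distributed according to the induced law $\rho_{i,j}$. Its common marginal is $\mu_i$, the law of $\Pr(A_i=1\mid A_{<i})$, and $\rho_{i,j}\in\mathcal K_j(\mu_i)$. Hence the right-hand side is at least $\inf_{\rho\in\mathcal K_j(\mu_i)}\iint H(\Pi^{(j)}_{s,t}(0,0))\,\dd\rho$.

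\textbf{Step 4 (combining the protocols).} Average over $j$ with weights $w_j$ and apply the hypothesis coordinatewise. This is allowed because $\int s\,\mu_i(\dd s)=\Pr(A_i=1)\le c$. We obtain
\[
H(A)\ge\sum_j w_jH(A\cup B\mid J=j)\ge\gamma\sum_i\int H(s)\,\mu_i(\dd s)=\gamma\sum_i H(A_i\mid A_{<i})=\gamma H(A).
\]
Since $\gamma>1$ and $H(A)>0$, this is a contradiction, so $\operatorname{freq}(\mathcal F)\ge c$. Taking the infimum over all $\mathcal F$ gives $c\le c_{\mathrm{UC}}$.

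\textbf{Main obstacle.} The delicate step is Step~2: one must check that the sequential construction really produces marginals equal to the law of $A$ at every step. This requires each $\Pi_{s,t}$ to be a coupling of $\operatorname{Ber}(s)$ and $\operatorname{Ber}(t)$ given the respective pasts. Along with this, one must check that the induced law at coordinate $i$ has both marginals equal to $\mu_i$, and that $\mathcal K_j(\mu)$ contains it by assumption. The other steps are routine entropy manipulations.
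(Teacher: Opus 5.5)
Your argument is correct and is the standard Gilmer-type derivation behind Liu's Proposition~3. The paper gives no proof of its own and simply cites Liu. The key moves are to fix the protocol index $j$ for the whole sequential construction, bound $H(A\cup B)\le\log|\mathcal F|$, lower-bound via the chain rule, apply the hypothesis to each $\mu_i$ (which does not depend on $j$), and average with weights $w_j$. The one slip is the claim $H(A)>0$ in Step~1, which fails for a singleton family $\{S\}$; this only matters when $c>1$, and running your argument on $\{\varnothing,\{1\}\}$ shows the hypothesis forces $c\le 1/2$, so such families satisfy $\operatorname{freq}=1\ge c$.
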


A protocol is \emph{conditionally i.i.d.} in the sense of Liu~\cite[Definition~2]{liu} if the two output bits become independent after conditioning on an auxiliary variable.  Liu shows that every induced law of such a protocol with common marginal $P$ belongs to
\begin{equation}
 \cC_3(P):=
 \left\{\rho:
 \begin{array}{l}
 \rho\text{ is a coupling of }P\text{ with itself, and}\\
 \rho\in\overline{\operatorname{conv}}
 \{Q^{\otimes2}:Q\text{ is a Borel probability measure on }[0,1]\}
 \end{array}
 \right\}.
 \label{eq:C3-def}
\end{equation}
Here the closure is taken with respect to weak convergence of probability measures on $[0,1]^2$.

\begin{lemma}
\label{lem:positive-correlation}
Let $P$ be a probability measure on $[0,1]$, let $\rho\in\cC_3(P)$, and let $\psi:[0,1]\to\mathbb R$ be continuous.  Then
\[
 \iint \psi(x)\psi(y)\,\rho(\dd x,\dd y)
 \ge\left(\int\psi\,\dd P\right)^2.
\]
\end{lemma}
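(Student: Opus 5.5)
The plan is to prove the inequality first for the extreme points $Q^{\otimes2}$, then pass to convex combinations, and finally to weak limits. The key quantity is the functional
\[
 F(\rho):=\iint \psi(x)\psi(y)\,\rho(\dd x,\dd y)-\Bigl(\int\psi(x)\,\rho(\dd x,\dd y)\Bigr)^2,
\]
where the last integral is taken against the first marginal of $\rho$. We want to show $F(\rho)\ge0$ for every $\rho$ in $\overline{\operatorname{conv}}\{Q^{\otimes2}\}$. For $\rho\in\cC_3(P)$ the first marginal is $P$, so this is exactly the claimed inequality.

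\emph{Step 1 (product measures).} For $\rho=Q^{\otimes2}$, Fubini gives $\iint\psi(x)\psi(y)\,Q(\dd x)Q(\dd y)=(\int\psi\,\dd Q)^2$. Hence $F(Q^{\otimes2})=0$.

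\emph{Step 2 (finite mixtures).} Let $\rho=\sum_i\lambda_iQ_i^{\otimes2}$ with $\lambda_i\ge0$ and $\sum\lambda_i=1$, and put $a_i:=\int\psi\,\dd Q_i$. The first marginal of $\rho$ is $\sum_i\lambda_iQ_i$. Therefore
\[
 \iint\psi\otimes\psi\,\dd\rho=\sum_i\lambda_ia_i^2\ge\Bigl(\sum_i\lambda_ia_i\Bigr)^2=\Bigl(\int\psi\,\dd\textstyle\sum_i\lambda_iQ_i\Bigr)^2,
\]
where the inequality is Jensen's (equivalently Cauchy--Schwarz, or nonnegativity of a variance). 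So $F\ge0$ on the convex hull.

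\emph{Step 3 (weak closure).} Since $\psi$ is continuous on the compact set $[0,1]$, both $(x,y)\mapsto\psi(x)\psi(y)$ and $(x,y)\mapsto\psi(x)$ are bounded continuous functions on $[0,1]^2$. Both integrals in $F$ are therefore weakly continuous in $\rho$, and so is $F$ (the square of a continuous function is continuous). The set $\{F\ge0\}$ is thus weakly closed. It contains the convex hull by Step 2, so it contains $\overline{\operatorname{conv}}$.

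Step 3 is the only place needing care, and it is where the continuity hypothesis on $\psi$ is essential. It is also why we phrase the argument through the first marginal of $\rho$ rather than through a fixed $P$. The approximating mixtures need not have marginal exactly $P$; only in the limit do their marginals converge weakly to $P$, and this suffices because marginalization is weakly continuous.
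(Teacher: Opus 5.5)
Your proof is correct and is essentially the paper's argument: Jensen's inequality on finite mixtures $\sum_i\lambda_iQ_i^{\otimes2}$, followed by passage to weak limits, using that $(x,y)\mapsto\psi(x)\psi(y)$ and $(x,y)\mapsto\psi(x)$ are bounded continuous test functions on $[0,1]^2$. The only cosmetic difference is that you phrase the limit step as weak closedness of the set $\{F\ge0\}$, whereas the paper chooses an explicit approximating sequence $\rho_n\to\rho$ and notes that its first marginals satisfy $P_n\to P$.
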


\begin{proof}
Choose finite mixtures
\[
 \rho_n=\sum_i\lambda_{i,n}Q_{i,n}^{\otimes2}
\]
converging weakly to $\rho$, as permitted by \eqref{eq:C3-def}, and put $P_n:=\sum_i\lambda_{i,n}Q_{i,n}$.  The first marginal of $\rho_n$ is $P_n$, so $P_n$ converges weakly to $P$.  For every $n$,
\[
 \iint\psi(x)\psi(y)\,\rho_n(\dd x,\dd y)
 =\sum_i\lambda_{i,n}\left(\int\psi\,\dd Q_{i,n}\right)^2
 \ge\left(\int\psi\,\dd P_n\right)^2
\]
by Jensen's inequality.  Since $\psi$ is continuous and bounded on $[0,1]$, weak convergence allows passage to the limit.
\end{proof}

\medskip
\noindent\textbf{Independent construction.}
The independent protocol is
\[
 \Pi^{(0)}_{s,t}:=\operatorname{Ber}(s)\otimes\operatorname{Ber}(t),
 \qquad
 \Pi^{(0)}_{s,t}(0,0)=(1-s)(1-t).
\]

\medskip
\noindent\textbf{Parameterized Example~5.}
Liu's Example~5~\cite[Example~5]{liu} is specified by a measurable function $f:[0,1]\to\mathbb R$ satisfying
\[
 0\le f(x)\le\min\{x,1-x\}.
\]
For $0\le\ell\le1$, the choice
\[
 f_\ell(x):=\ell x(1-x)
\]
is admissible because $x(1-x)\le\min\{x,1-x\}$.  In complemented coordinates $x=1-s$, $y=1-t$, its entropy integrand is
\[
 H(r_\ell(x,y)),
\]
where
\begin{equation}
 r_\ell(x,y):=
 xy+\ell^2x(1-x)y(1-y)
 =xy\bigl(1+\ell^2(1-x)(1-y)\bigr),
 \label{eq:r-ell}
\end{equation}
and
\begin{equation}
 g_\ell(x):=r_\ell(x,x)
 =x^2\bigl(1+\ell^2(1-x)^2\bigr).
 \label{eq:g-ell}
\end{equation}
For $\ell=1$ we write
\[
 r:=r_1,\qquad g:=g_1.
\]

For a probability measure $P$ on $[0,1]$, write
\[
 m(P):=\int x\,P(\dd x).
\]
If $S$ has distribution $\mu$ and $P$ is the distribution of $1-S$, then
\begin{equation}
 m(P)=\int x\,P(\dd x)
 =1-\int s\,\mu(\dd s).
 \label{eq:mean-reflection}
\end{equation}
Thus $\int s\,\mu(\dd s)\le c$ is equivalent to $m(P)\ge1-c$, and
\[
 H\bigl(\Pi^{(0)}_{s,t}(0,0)\bigr)=H(xy).
\]

\section{Liu's conjectured threshold}

The goal of this section is to reduce Theorem~\ref{thm:liu-bound} to two pointwise inequalities for the independent protocol and Example~5 at $\ell=1$.  Sections~4 and~5 prove these inequalities.

Define
\begin{equation}
 m_*:=1-c_{\mathrm L}
 =\frac{x_*H(x_*)}{H(x_*^2)}.
 \label{eq:mstar}
\end{equation}
Since
\[
 g(x)=2x^2-2x^3+x^4
\]
and $p(x_*)=0$, we have $x_*^2+g(x_*)=1$.  Hence
\begin{equation}
 H(g(x_*))=H(x_*^2).
 \label{eq:H-equality-star}
\end{equation}

Choose $\alpha+\beta=1$ so that, on the diagonal $x=y$, the two sides of
\[
 m_*\bigl[\alpha H(x^2)+\beta H(g(x))\bigr]\ge xH(x)
\]
have the same derivative at $x=x_*$.  Equality of the values at $x_*$ follows from \eqref{eq:mstar} and \eqref{eq:H-equality-star}.  With
\[
 A_0(x):=H(x^2),\qquad B_0(x):=H(g(x)),
\]
the derivative condition gives
\begin{equation}
 \beta:=
 \frac{
 A_0(x_*)\left(\dfrac1{x_*}+\dfrac{H'(x_*)}{H(x_*)}\right)-A_0'(x_*)
 }{B_0'(x_*)-A_0'(x_*)},
 \qquad
 \alpha:=1-\beta.
 \label{eq:alpha-beta}
\end{equation}
Proposition~\ref{prop:two-construction-certificate} gives $\alpha,\beta\in(0,1)$.

\begin{lemma}
\label{lem:R}
For every $x,y\in[0,1]$,
\begin{equation}
 H(r(x,y))^2\ge H(g(x))H(g(y)).
 \label{eq:R}
\end{equation}
\end{lemma}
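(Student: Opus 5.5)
The plan is to work with the logarithmic defect
\[
 F(x,y):=2\log H(r(x,y))-\log H(g(x))-\log H(g(y))
\]
on $(0,1)^2$. The inequality \eqref{eq:R} is exactly $F\ge0$ there. On the boundary of $[0,1]^2$ it holds trivially: if $x$ or $y$ is $0$ or $1$, then $g(0)=0$ and $g(1)=1$ force the right-hand side of \eqref{eq:R} to vanish. Since $r$ is symmetric, $F(x,y)=F(y,x)$, so it suffices to treat $y\le x$. On the diagonal $r(x,x)=g(x)$, so $F(x,x)=0$. By symmetry $\nabla F$ is orthogonal to the diagonal there, so $\partial_xF(x,x)=-\partial_yF(x,x)$, and in fact $\partial_x F(x,x)=0$. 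So equality holds along the whole diagonal, and the proof is necessarily a second-order statement near it.

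First I would look for a structural argument, expecting it to cover only part of the square. One ingredient is Cauchy--Schwarz: with $a=1-x$, $b=1-y$,
\[
 r(x,y)^2=x^2y^2(1+ab)^2\le x^2y^2(1+a^2)(1+b^2)=g(x)g(y),
\]
so $r\le\sqrt{g(x)g(y)}$. The other is that $u\mapsto\log H(e^u)$ is concave on $(-\infty,0]$. This concavity gives $H(\sqrt{g(x)g(y)})^2\ge H(g(x))H(g(y))$. Unfortunately, where $H$ is increasing (below $1/2$) the bound $r\le\sqrt{gg}$ points the wrong way. I would therefore try to pair $r$ with a lower mean of $g(x),g(y)$ in the region where everything lies below $1/2$. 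In the region where everything exceeds $1/2$, I would use $H(t)=H(1-t)$ and compare $1-r$ with the geometric mean of $1-g(x),1-g(y)$. I do not expect this to close the mixed region, where $g(y)<1/2<g(x)$, which is where the real difficulty lies.

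The robust plan is therefore a rigorous interval-arithmetic branch-and-bound on $\{0<y\le x<1\}$, in the spirit of the certificates in Appendix~A, split into three zones.

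\emph{Near the corners and edges.} Here I would use asymptotics instead of $F$. As $y\to0$ we have $r\approx xy(2-x)$, so
\[
 H(r)^2\asymp y^2\log^2(1/y),\qquad H(g(x))H(g(y))\asymp y^2\log(1/y).
\]
The left side wins by a factor $\log(1/y)$, and this can be made explicit with elementary bounds such as $H(t)\ge t\log(1/t)$ on a strip $y\le\delta$. The edge $x\to1$, where $g(x)\to1$, is handled in the same way after reflecting $t\mapsto1-t$.

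\emph{Near the diagonal.} Set $y=x-h$ with $0<h\le\eta$. I would enclose $F(x,x-h)/h^2$ by a Taylor expansion with an interval remainder, and verify that the second transverse derivative $-\tfrac12\partial_h^2F(x,x)$ is positive, with margin, uniformly in $x$. This is possibly only non-strict at the endpoints, which the edge analysis absorbs.

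\emph{Away from both.} On the remaining compact region $F$ is bounded below by a positive constant. Adaptive subdivision with interval enclosures of $H\circ r$ and $H\circ g$ then certifies $F>0$ on every box.

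The main obstacle is the diagonal zone. Because equality holds identically along $x=y$, a naive box enclosure can never certify positivity there. What is needed is a uniform lower bound on the quadratic coefficient, together with a remainder estimate good enough to match the strip width $\eta$ against the off-diagonal verification. If that coefficient degenerates somewhere in the interior, a higher-order expansion there would be needed. I would first compute it symbolically along $x\in(0,1)$ to check that this does not happen.
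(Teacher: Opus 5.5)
\paragraph{The gap: the two corners of the diagonal.} Neither of your zones covers a neighbourhood of $(0,0)$ or $(1,1)$, and your claim that the possible degeneracy at the endpoints is absorbed by the edge analysis is where the plan breaks.

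Your edge heuristic, that the left side wins by a factor $\log(1/y)$, holds only for $x$ bounded away from $0$. Near $(0,0)$ one has $r\approx 2xy$, $g(x)\approx 2x^2$ and $H(t)\approx t(\log(1/t)+1)$. After dividing by $4x^2y^2$, the comparison becomes $\bigl(\frac{A+B}2\bigr)^2\ge AB$ with $A=\log\frac1{2x^2}+1$ and $B=\log\frac1{2y^2}+1$. This is an AM--GM inequality that is an equality on the diagonal. Its relative margin is $(A-B)^2/(4AB)\approx\varepsilon^2/(4\log^2(1/x))$ for $y=x(1-\varepsilon)$. Any estimate with a fixed relative error, such as $H(t)\ge t\log(1/t)$ (relative loss about $1/\log(1/t)$), is swamped by this in a sector around the diagonal. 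So the strip $y\le\delta$ cannot be certified near the corner in the way you describe.

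The diagonal zone does not reach the corner either. The expansion of $F(x,x-h)$ is controlled only for $h$ small compared with $x$, so no uniform width $\eta$ exists. In logarithmic coordinates a neighbourhood of the corner is unbounded, so no finite subdivision terminates. The same happens at $(1,1)$: with $a=1-x$, $b=1-y$, the leading comparison is again $(a+b)^2$ against $4ab$, an equality on the diagonal. What is missing is a second-order analytic argument valid uniformly up to the boundary. Also, there is a sign slip: the quantity that must be positive is $\tfrac12\partial_h^2F(x,x)$, which equals $-\partial_x\partial_y\log H(r(x,y))$ at $y=x$, not its negative.

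\paragraph{How the paper supplies it.} Supplying that argument is the content of the paper's proof, and with it no branch-and-bound is needed. With $\mathcal L=\log H(r)$, the paper proves $\mathcal L_2(x,y)>\mathcal L_2(y,y)$ for $0<x<y<1$, which integrates to $F>0$ off the diagonal. There are two cases.

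\emph{Case $g(y)<1/2$} (a region containing $(0,0)$). The paper shows $\mathcal L_{12}<0$ on the whole region, not just on the diagonal; in particular this gives positivity of your transverse coefficient. It combines three ingredients:
\begin{itemize}
\item $-\Lambda''/\Lambda'>1/u+1/(1-u)$ for $u<1/2$ (Lemma~\ref{lem:entropy-one-variable});
\item the identity $r_{xy}/(r_xr_y)=1/r+1/(WUV)$ (Lemma~\ref{lem:r-identity});
\item the exact Bernstein bound $WUV\ge 1-r$ on $[0,43/64]^2$ (Lemma~\ref{lem:WUV}).
\end{itemize}

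\emph{Case $g(y)\ge1/2$} (which contains $(1,1)$ and the mixed region you found troublesome). The paper writes $\mathcal L_2=A_x(y)M(r)$ with $A_x(y)=r_y/r$. It then uses $0<A_x(y)<A_y(y)$, $r(x,y)<g(y)$, and the fact that $M(u)=uH'(u)/H(u)$ is strictly decreasing (Lemma~\ref{lem:M-decreasing}).

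That last fact is equivalent to your concavity of $u\mapsto\log H(e^u)$. The paper applies it to the derivative $\mathcal L_2$, not to a comparison of $r$ with a mean of $g(x)$ and $g(y)$. That is why it never meets the wrong-direction problem you ran into with $r\le\sqrt{g(x)g(y)}$.
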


\begin{lemma}
\label{lem:S}
For every $x,y\in[0,1]$,
\begin{equation}
 2\alpha H(xy)+2\beta\sqrt{H(g(x))H(g(y))}
 \ge\frac{xH(y)+yH(x)}{m_*}.
 \label{eq:S}
\end{equation}
\end{lemma}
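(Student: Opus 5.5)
The statement to prove is Lemma~\ref{lem:S}, a two-variable inequality for fixed numerical constants $\alpha,\beta,m_*$. By construction equality holds at $x=y=x_*$, where it is tangent along the diagonal, so no crude argument will work near that point. The plan is to prove the inequality rigorously by combining analytic reductions with an interval-arithmetic certificate. An analytic local argument covers a neighbourhood of the equality point.

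\textbf{Step 1: symmetrize and reduce the square root.} Both sides are symmetric in $(x,y)$. Write $B_0(x)=H(g(x))$. Since $2\sqrt{B_0(x)B_0(y)}$ is awkward for interval arithmetic near $0$ and $1$, it helps to use the equivalent form
\[
 \Phi(x,y):=2\alpha H(xy)+2\beta\sqrt{B_0(x)B_0(y)}-\frac{xH(y)+yH(x)}{m_*}\ge0 .
\]
One may also use the substitution $u=\sqrt{B_0}$ for evaluation. On the diagonal, $\Phi(x,x)=2\bigl[\alpha H(x^2)+\beta H(g(x))-xH(x)/m_*\bigr]$. This is twice the one-variable function whose value and derivative vanish at $x_*$ by \eqref{eq:mstar}, \eqref{eq:H-equality-star} and \eqref{eq:alpha-beta}.

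\textbf{Step 2: boundary and corner regions.} Near $x=0$ or $y=0$, expand. $H(xy)\approx xy\log(1/(xy))$ and $H(g(x))\approx 2x^2\log(1/x)$, while the right-hand side is $\approx (x H(y)+yH(x))/m_*$. Positivity there needs care: if $x\to0$ with $y$ fixed, the left-hand side is $\sim 2\alpha xy\log(1/x)+2\beta x\sqrt{2\log(1/x)B_0(y)}$. The right-hand side is $\sim xH(y)/m_*$, and it is dominated because of the $\log(1/x)$ factors. I will make this explicit with monotone bounds valid for $x\le\delta$, and similarly near $x=1$. There $H(x)\to0$ and $g(1)=1$, so I will check the sign of the leading terms. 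At $(1,1)$ both sides vanish, so a first-order expansion is needed.

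\textbf{Step 3: the equality point.} On a small box around $(x_*,x_*)$, prove that $\Phi(x_*,x_*)=0$ and $\nabla\Phi(x_*,x_*)=0$. The gradient vanishes by symmetry plus the diagonal derivative condition. Then certify with interval arithmetic that the Hessian of $\Phi$ is positive definite on the whole box. Taylor's theorem then gives $\Phi\ge0$ there.

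\textbf{Step 4: the rest of the square.} On the compact remainder, $\Phi$ is bounded below by a positive constant. This can be certified by adaptive subdivision with interval (MPFR) evaluation of $\Phi$, refining boxes where the enclosure is not positive. This is recorded as a computer-assisted assertion in the appendix.

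\textbf{Main obstacle.} I expect the hardest part to be the interface between Step 3 and the other regions. Because equality is attained, the margin of $\Phi$ is quadratically small near $(x_*,x_*)$, so the Hessian box must be large enough that the margin outside it exceeds the interval overestimation. The endpoint $(1,1)$ is a second degenerate point and needs the same explicit local treatment.
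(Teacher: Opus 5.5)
Your architecture matches the paper's: analytic control of the degenerate boundary, a certified positive-definite Hessian on a box around the tangency point, and an interval cover of the rest. Step~3 is sound, since symmetry plus the diagonal derivative condition does give $\nabla\Phi(x_*,x_*)=0$. The gap is in Step~2, whose stated mechanism is wrong.

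For $x\to0$ with $y\in(0,1)$ fixed, the right-hand side is not $\sim xH(y)/m_*$. It contains $yH(x)/m_*=xy\log(1/x)/m_*+O(x)$. This term dominates $xH(y)/m_*$ and has exactly the order of the leading left-hand term $2\alpha H(xy)=2\alpha xy\log(1/x)+O(x)$. The $\beta$-term is only of order $x\sqrt{\log(1/x)}$. Thus $\Phi=(2\alpha-1/m_*)xy\log(1/x)+O\bigl(x\sqrt{\log(1/x)}\bigr)$. Positivity therefore comes not from log factors but from the strict inequality $2\alpha m_*>1$, i.e.\ $c_0=m_*\alpha>1/2$ (numerically $c_0\approx0.5555$).

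This single margin, which your proposal never identifies, governs every degenerate part of the boundary:
\begin{itemize}
\item On the edge $y=1$ one has exactly $\Phi(x,1)=(2\alpha-1/m_*)H(x)$.
\item Near $(0,0)$ and $(0,1)$ the leading comparison is again $2\alpha$ against $1/m_*$.
\item Near $(1,1)$, $\Phi$ is not differentiable (it has terms like $a\log(1/a)$ with $a=1-x$, $b=1-y$), so no first-order expansion exists. Positivity comes from $(2\alpha-1/m_*)\bigl(a\log\frac1a+b\log\frac1b\bigr)$ beating an $O(a+b)$ error.
\end{itemize}
Note also that $\Phi$ vanishes identically on both axes $x=0$ and $y=0$, not just at $(1,1)$. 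So your near-axis bounds must be uniform in the other variable up to the corners, where fixed-$y$ asymptotics gives nothing. For instance, near $(0,1)$ the $\beta$-term disappears because $H(g(y))\to0$.

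The paper sidesteps all of this with a normalization worth adopting. In the coordinates $x=e^{-(t+d)}$, $y=e^{-(t-d)}$, multiplying by $m_*/(2xy)$ turns $\Phi$ into $F(t,d)=c_0G(2t)+m_*\beta\Gamma_5(t+d)\Gamma_5(t-d)-\frac12\bigl(G(t+d)+G(t-d)\bigr)$. Here $G(u)=e^uH(e^{-u})$ is increasing and concave. This has three effects:
\begin{itemize}
\item The axis degeneracy disappears: $F\ge c_0(2t+1-e^{-2t})-(t+1)$, which grows like $(2c_0-1)t$.
\item $F$ is even in $d$.
\item The remaining degenerate regions become short consequences of $c_0>1/2$. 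Concavity gives $F\ge c_0G(2t)-G(t)$, which is positive for $t\le0.002$ (the corner $(1,1)$). Near the edge $y=1$ one uses $F\ge(c_0-\frac12)G(2t)-\frac12G(t-d)$, discarding the nonnegative $\sqrt{H(g)}$ product. Its unbounded derivative is exactly the interval-arithmetic nuisance you flagged.
\end{itemize}
With Step~2 replaced in this way, your plan coincides with the paper's proof.
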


Put
\[
 b(x):=\sqrt{H(g(x))}.
\]
Lemma~\ref{lem:R}, Lemma~\ref{lem:positive-correlation}, and Lemma~\ref{lem:S} give the estimate for a prescribed marginal.

\begin{proposition}
\label{prop:fixed-two-construction}
Let $P$ be a probability measure on $[0,1]$ and let $\rho\in\cC_3(P)$.  Then
\begin{equation}
 \alpha\iint H(xy)\,P(\dd x)P(\dd y)
 +\beta\iint H(r(x,y))\,\rho(\dd x,\dd y)
 \ge \frac{m(P)}{m_*}\int H(x)\,P(\dd x).
 \label{eq:fixed-two-construction}
\end{equation}
\end{proposition}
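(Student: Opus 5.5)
We chain Lemma~\ref{lem:R}, Lemma~\ref{lem:positive-correlation} and Lemma~\ref{lem:S}. First we bound the second term on the left of \eqref{eq:fixed-two-construction} from below using only the marginal $P$.

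\emph{Step 1: the $\beta$-term.} By Lemma~\ref{lem:R}, $H(r(x,y))\ge\sqrt{H(g(x))H(g(y))}=b(x)b(y)$ for all $x,y\in[0,1]$, since both sides are nonnegative. Integrating against $\rho$ gives
\[
 \iint H(r(x,y))\,\rho(\dd x,\dd y)\ge\iint b(x)b(y)\,\rho(\dd x,\dd y).
\]
The function $b=\sqrt{H\circ g}$ is continuous on $[0,1]$, because $g$ maps $[0,1]$ continuously into $[0,1]$ and $H$ is continuous and nonnegative there. So Lemma~\ref{lem:positive-correlation} with $\psi=b$ applies and yields
\[
 \iint b(x)b(y)\,\rho(\dd x,\dd y)\ge\Bigl(\int b\,\dd P\Bigr)^2=\iint b(x)b(y)\,P(\dd x)P(\dd y).
\]
Since $\beta>0$ by Proposition~\ref{prop:two-construction-certificate}, the left side of \eqref{eq:fixed-two-construction} is therefore at least
\[
 \iint\bigl[\alpha H(xy)+\beta b(x)b(y)\bigr]\,P(\dd x)P(\dd y).
\]

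\emph{Step 2: the pointwise inequality.} We symmetrize this integrand and apply Lemma~\ref{lem:S}. The integral equals half the $P\otimes P$-integral of $2\alpha H(xy)+2\beta\sqrt{H(g(x))H(g(y))}$, and this expression is at least $(xH(y)+yH(x))/m_*$ pointwise. Integrating over $P\otimes P$ gives
\[
 \frac1{2m_*}\iint\bigl(xH(y)+yH(x)\bigr)\,P(\dd x)P(\dd y)=\frac{1}{2m_*}\cdot 2\,m(P)\int H\,\dd P,
\]
which is exactly the right-hand side of \eqref{eq:fixed-two-construction}.

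\emph{Checks and main obstacle.} The only points to verify are the signs and the hypotheses. We need $\alpha\ge0$, so that the $\alpha$-term can be kept as it is; we need $\beta\ge0$, so that the lower bound in Step~1 can be multiplied by $\beta$; and we need $m_*>0$. Finally, $b$ must be continuous, which holds as noted in Step~1. The substantive difficulty does not lie in this proposition, which is a formal assembly of earlier results. It lies in Lemmas~\ref{lem:R} and~\ref{lem:S} themselves, which are deferred to Sections~4 and~5.
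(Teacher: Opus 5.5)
Your proposal is correct and matches the paper's proof. You take square roots in Lemma~\ref{lem:R} and apply Lemma~\ref{lem:positive-correlation} to bound the Example~5 term below by $\bigl(\int b\,\dd P\bigr)^2$, then integrate Lemma~\ref{lem:S} against $P\otimes P$, halve, and use $\beta\ge0$ to combine; the sign check on $\alpha$ is superfluous, since that term passes through unchanged.
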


\begin{proof}
Taking square roots in \eqref{eq:R} and applying Lemma~\ref{lem:positive-correlation} gives
\[
 \iint H(r(x,y))\,\rho(\dd x,\dd y)
 \ge\left(\int b\,\dd P\right)^2.
\]
Next integrate \eqref{eq:S} with respect to $P(\dd x)P(\dd y)$.  The right-hand side becomes, by Fubini,
\[
 \frac{2m(P)}{m_*}\int H\,\dd P.
\]
Dividing by $2$ gives
\[
 \alpha\iint H(xy)\,\dd P\dd P
 +\beta\left(\int b\,\dd P\right)^2
 \ge\frac{m(P)}{m_*}\int H\,\dd P.
\]
Replacing the squared integral by the preceding lower bound for the Example~5 integral proves \eqref{eq:fixed-two-construction}.
\end{proof}

\begin{proof}[Proof of Theorem~\ref{thm:liu-bound}]
Fix $0<c<c_{\mathrm L}$ and let $\mu$ be a probability measure on $[0,1]$ with $\int s\,\mu(\dd s)\le c$.  Let $S$ be a $[0,1]$-valued random variable with distribution $\mu$, and let $P$ be the distribution of $1-S$.  By \eqref{eq:mean-reflection},
\[
 m(P)\ge1-c>m_*.
\]
Set
\[
 \gamma:=\frac{1-c}{m_*}>1.
\]
Apply Proposition~\ref{prop:liu-criterion} to the independent protocol and Example~5 with weights $\alpha,\beta$, taking
\[
 \mathcal K_0(\mu)=\{\mu\otimes\mu\},
 \qquad
 \mathcal K_5(\mu)=\cC_3(\mu).
\]
For any $\rho\in\cC_3(\mu)$, reflect both coordinates: if $(X,Y)$ has joint law $\rho$, consider the law of $(1-X,1-Y)$.  Its two marginals are $P$, and it belongs to $\cC_3(P)$ by \eqref{eq:C3-def}: each product measure $Q^{\otimes2}$ is sent to the square of the one-dimensional law of $1-X$ when $X$ has distribution $Q$, and convex combinations and weak limits are preserved.  Moreover $H(1-s)=H(s)$.  Proposition~\ref{prop:fixed-two-construction} therefore bounds the left-hand side in Liu's criterion from below by
\[
 \frac{m(P)}{m_*}\int H\,\dd P
 \ge\gamma\int H\,\dd\mu.
\]
Thus $c\le c_{\mathrm{UC}}$.  Letting $c\uparrow c_{\mathrm L}$ proves the theorem.
\end{proof}

\section{The first pointwise inequality at Liu's parameter}

The goal of this section is to prove Lemma~\ref{lem:R}.  The proof splits according to the value of $g(y)$.  When $g(y)<1/2$, Lemmas~\ref{lem:entropy-one-variable}, \ref{lem:r-identity}, and~\ref{lem:WUV} show that a mixed derivative of $\log H(r(x,y))$ is negative.  When $g(y)\ge 1/2$, Lemma~\ref{lem:M-decreasing} gives the needed comparison directly.  We establish these four ingredients first and then prove Lemma~\ref{lem:R}.

For $u\in(0,1)$, where $H(u)>0$, set
\[
 \Lambda(u):=\log H(u),
 \qquad
 M(u):=\frac{uH'(u)}{H(u)}.
\]

The following one-variable inequality for $H$ will be used in the case $g(y)<1/2$.

\begin{lemma}
\label{lem:entropy-one-variable}
For $0<u<1/2$,
\begin{equation}
 -\frac{\Lambda''(u)}{\Lambda'(u)}
 >\frac1u+\frac1{1-u}.
 \label{eq:entropy-one-variable}
\end{equation}
\end{lemma}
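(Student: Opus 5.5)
The plan is to rewrite \eqref{eq:entropy-one-variable} in terms of $H,H',H''$, reduce it to one explicit inequality in the log-odds variable, and prove that by elementary estimates. For $0<u<1/2$ we have $H(u)>0$ and
\[
 L:=H'(u)=\log\frac{1-u}{u}>0,\qquad H''(u)=-\frac1{u(1-u)}.
\]
Since $\Lambda'=H'/H$ and $\Lambda''=H''/H-(H'/H)^2$,
\[
 -\frac{\Lambda''}{\Lambda'}=\frac{1}{u(1-u)L}+\frac{L}{H}.
\]
Because $\frac1u+\frac1{1-u}=\frac1{u(1-u)}$, multiplying by $u(1-u)LH>0$ shows that \eqref{eq:entropy-one-variable} is equivalent to
\[
 u(1-u)L^2>(L-1)H(u).
\]
When $L\le1$ this holds trivially, since the left side is positive and the right side is nonpositive. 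So only $L>1$, that is $u<1/(1+e)$, remains.

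For $L>1$ I will change variables to $v:=e^{-L}\in(0,1/e)$, so that $u=v/(1+v)$ and $u(1-u)=v/(1+v)^2$. A direct computation gives
\[
 H(u)=\log(1+v)+\frac{Lv}{1+v}.
\]
The target becomes
\[
 \frac{L^2v}{(1+v)^2}>(L-1)\Bigl(\log(1+v)+\frac{Lv}{1+v}\Bigr).
\]
Using $\log(1+v)<v$ and then multiplying by $(1+v)^2/v$, it suffices to show
\[
 L^2\ge(L-1)\bigl((1+v)^2+L(1+v)\bigr).
\]
The leading terms give $(L-1)(L+1)=L^2-1$, so this reduces to
\[
 1\ge(L-1)\bigl(2v+v^2+Lv\bigr)=(L^2+L-2)e^{-L}+(L-1)e^{-2L}.
\]

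The main obstacle is that this inequality is asymptotically tight: as $u\to0$ both sides of the original inequality are $\sim L^2e^{-L}$. So the estimate must keep the constant margin $1$ coming from $L^2-(L^2-1)$, and the crude bound $\log(1+v)<v$ must not spend more than that. The final step is a one-variable calculus bound for $L\ge1$. The function $(L^2+L-2)e^{-L}$ has derivative $(-L^2+L+3)e^{-L}$, so its maximum is at $L=(1+\sqrt{13})/2\approx2.30$, with value about $0.56$. The second term satisfies $(L-1)e^{-2L}\le\frac{1}{2e^{3}}<0.03$. Hence the right side is below $0.6<1$, with room to spare, which gives strict inequality. If a fully rigorous numeric value at the maximiser is wanted, it can be bounded by hand, or by a short interval check in the style of Appendix~A.
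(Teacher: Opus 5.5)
Your proposal is correct and is essentially the paper's argument. Your reduction to $u(1-u)L^2>(L-1)H(u)$ is the paper's numerator, which the paper reaches via a detour through $M=u\Lambda'$; the case $L\le1$ is handled identically; and your estimate $\log(1+v)<v$ is the paper's $-\log(1-u)\le u/(1-u)$. So your final inequality is exactly the paper's $1-u\lambda(\lambda+1)+u^2\lambda^2>0$ multiplied by $(1+v)^2$.

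The only difference is the last step. The paper drops $u^2\lambda^2$ and uses $u<e^{-\lambda}$ together with $e^\lambda>\lambda(\lambda+1)$ (from the cubic Taylor polynomial). That avoids certifying the value of $(L^2+L-2)e^{-L}$ at $L=(1+\sqrt{13})/2$, which your version still needs, though the margin makes it easy.
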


\begin{proof}
For $0<u<1/2$, put
\[
 \lambda:=\log\frac{1-u}{u}>0,
 \qquad
 q:=-\log(1-u)>0.
\]
Then
\[
 H'(u)=\lambda,
 \qquad
 H(u)=u\lambda+q,
\]
and therefore
\begin{equation}
 M(u)=\frac{u\lambda}{u\lambda+q}.
 \label{eq:M-ul}
\end{equation}
For $0<u<1/2$, both $M(u)$ and $\Lambda'(u)=H'(u)/H(u)$ are positive.  Since $M=u\Lambda'$, we may take logarithms:
\[
 \log M=\log u+\log\Lambda'.
\]
Differentiating term by term gives
\[
 \frac{M'}M=\frac1u+\frac{\Lambda''}{\Lambda'}.
\]
Hence
\[
 -\frac{\Lambda''}{\Lambda'}=-\frac{M'}M+\frac1u,
\]
and therefore
\[
 -\frac{\Lambda''}{\Lambda'}>\frac1u+\frac1{1-u}
 \quad\Longleftrightarrow\quad
 -\frac{M'}M>\frac1{1-u}.
\]
Thus it remains to prove
\begin{equation}
 -\frac{M'}M>\frac1{1-u}.
 \label{eq:M-target}
\end{equation}

We compute the left-hand side explicitly.  Since
\[
 \lambda'=-\frac1{u(1-u)},
\]
taking logarithms in \eqref{eq:M-ul} gives
\[
 \log M=\log u+\log\lambda-\log H(u).
\]
Differentiating term by term gives
\[
 \frac{M'}M
 =\frac1u+\frac{\lambda'}\lambda-\frac{H'(u)}{H(u)}
 =\frac1u-\frac1{u(1-u)\lambda}-\frac{\lambda}{H(u)}.
\]
Hence
\begin{align}
 -\frac{M'}M-\frac1{1-u}
 &=-\frac1u+\frac1{u(1-u)\lambda}+\frac{\lambda}{H(u)}-\frac1{1-u}\nonumber\\
 &=\frac{1-\lambda}{u(1-u)\lambda}+\frac{\lambda}{H(u)}\nonumber\\
 &=\frac{H(u)(1-\lambda)+u(1-u)\lambda^2}
 {u(1-u)\lambda H(u)}\nonumber\\
 &=\frac{q(1-\lambda)+u\lambda(1-u\lambda)}
 {u(1-u)\lambda H(u)}.
 \label{eq:M-numerator}
\end{align}
The denominator in \eqref{eq:M-numerator} is positive, so it remains to show that the numerator is positive.

If $0<\lambda\le1$, then $q(1-\lambda)\ge0$.  Moreover $0<u\lambda\le u<1/2$, so $u\lambda(1-u\lambda)>0$.  Thus the numerator is positive.

Assume now $\lambda>1$.  The elementary estimate
\[
 -\log(1-u)\le\frac{u}{1-u}
\]
gives $q\le u/(1-u)$.  Because $1-\lambda<0$, multiplying the inequality $q\le u/(1-u)$ by $1-\lambda$ reverses the inequality sign and gives
\[
 q(1-\lambda)\ge\frac{u}{1-u}(1-\lambda).
\]
Therefore
\begin{align}
 q(1-\lambda)+u\lambda(1-u\lambda)
 &\ge \frac{u}{1-u}
 \left(1-u\lambda(\lambda+1)+u^2\lambda^2\right).
 \label{eq:lambda-lower}
\end{align}
Now
\[
 u=\frac1{1+e^\lambda}<e^{-\lambda}.
\]
For $\lambda\ge1$ we have $e^\lambda>\lambda(\lambda+1)$.  To see this without any approximation, use the cubic Taylor polynomial:
\[
 e^\lambda>1+\lambda+\frac{\lambda^2}{2}+\frac{\lambda^3}{6}.
\]
Subtracting $\lambda(\lambda+1)$ gives
\[
 1-\frac{\lambda^2}{2}+\frac{\lambda^3}{6}.
\]
Its derivative is $\lambda(\lambda/2-1)$, so its minimum on $[1,\infty)$ is attained at $\lambda=2$ and equals $1/3$.  Hence it is positive.  Consequently
\[
 u\lambda(\lambda+1)<1.
\]
The parenthesized factor in \eqref{eq:lambda-lower} is therefore strictly positive: the sum of its first two terms is $1-u\lambda(\lambda+1)>0$, and its last term $u^2\lambda^2$ is nonnegative.  This proves \eqref{eq:M-target}, hence \eqref{eq:entropy-one-variable}.
\end{proof}

Specializing \eqref{eq:r-ell} to $\ell=1$,
\[
 r(x,y)=xy\bigl(2-x-y+xy\bigr).
\]
Define
\[
 W:=2-x-y+xy,
 \qquad
 U:=2-2x-y+2xy,
 \qquad
 V:=2-x-2y+2xy.
\]
Thus $r=xyW$, and direct differentiation gives
\begin{equation}
 r_x=yU,\qquad r_y=xV,\qquad r_{xy}=2-2x-2y+4xy.
 \label{eq:r-derivatives}
\end{equation}
For $0<x,y<1$, one has $W=1+(1-x)(1-y)>0$, $U=y+2(1-x)(1-y)>0$, and $V=x+2(1-x)(1-y)>0$.  Hence $r$ is strictly increasing in each variable on $(0,1)^2$, and in particular $0<r(x,y)<1$ there.

The next identity rewrites the mixed derivative of $r$ in the form needed to combine it with Lemma~\ref{lem:entropy-one-variable}.

\begin{lemma}
\label{lem:r-identity}
For $0<x,y<1$, with $W,U,V$ defined above,
\begin{equation}
 \frac{r_{xy}}{r_xr_y}
 =\frac1r+\frac1{WUV}.
 \label{eq:r-identity}
\end{equation}
\end{lemma}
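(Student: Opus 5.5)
The identity \eqref{eq:r-identity} is purely algebraic, so I plan to clear denominators and reduce it to a polynomial identity. By \eqref{eq:r-derivatives}, $r_xr_y=xyUV$, and $r=xyW$. All of $x,y,r,W,U,V$ are positive on $(0,1)^2$, as recorded just before the lemma, so multiplying both sides of \eqref{eq:r-identity} by $r_xr_y=xyUV>0$ is legitimate. The right-hand side then becomes
\[
 \frac{xyUV}{xyW}+\frac{xyUV}{WUV}=\frac{UV+xy}{W}.
\]
Hence \eqref{eq:r-identity} is equivalent to
\[
 W\,r_{xy}=UV+xy .
\]

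To verify this polynomial identity cleanly, I will pass to complemented variables $a:=1-x$ and $b:=1-y$. In these variables
\[
 W=1+ab,\qquad U=y+2ab,\qquad V=x+2ab,\qquad r_{xy}=2ab+2xy,
\]
where the last form uses $2ab=2-2x-2y+2xy$.

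Expanding the left-hand side gives
\[
 W\,r_{xy}=2ab+2xy+2a^2b^2+2abxy.
\]
Expanding the right-hand side gives
\[
 UV+xy=2xy+2ab(x+y)+4a^2b^2.
\]
Their difference is $2ab\,(1+xy-ab-x-y)$. This vanishes because $ab=1-x-y+xy$.

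There is no real obstacle here. The only care needed is to justify the division by $r_xr_y$ and by $W$ through the positivity statements already established, and to keep the expansion organized. I would sanity-check the result at the corners $(0,0)$, $(1,0)$ and $(1,1)$, where both sides equal $4$, $0$ and $2$ respectively.
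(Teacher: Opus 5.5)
Your proof is correct and follows essentially the paper's route: both use $r=xyW$, $r_x=yU$, $r_y=xV$ and positivity on $(0,1)^2$ to reduce the identity to the polynomial identity $W\,r_{xy}=UV+xy$, and then verify that identity by expansion. Your substitution $a=1-x$, $b=1-y$ only organizes the expansion more compactly; the paper instead expands both sides fully in $x,y$.
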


\begin{proof}
Substitute $r=xyW$, $r_x=yU$, and $r_y=xV$.  By the positivity just established, multiplication by $xyWUV$ preserves the equality and reduces \eqref{eq:r-identity} to
\[
 W\,r_{xy}=UV+xy.
\]
Using the definitions of $W,U,V$ and \eqref{eq:r-derivatives},
\begin{align*}
 W\,r_{xy}
 &=(2-x-y+xy)(2-2x-2y+4xy),\\
 UV+xy
 &=(2-2x-y+2xy)(2-x-2y+2xy)+xy.
\end{align*}
Both sides expand to
\[
4-6x-6y+2x^2+2y^2+14xy
-6x^2y-6xy^2+4x^2y^2.
\]
\end{proof}

Let $y_0$ be the unique number in $(0,1)$ with $g(y_0)=1/2$.  Its existence follows from $g(0)=0<1/2<1=g(1)$ and continuity.  Indeed,
\[
 g'(x)=4x-6x^2+4x^3=2x(2-3x+2x^2)>0
 \qquad(0<x<1),
\]
so $g$ is strictly increasing.  The calculation
\[
 g(43/64)-\frac12=\frac{305}{2^{24}}>0
\]
shows that
\begin{equation*}
 y_0<\frac{43}{64}.
\end{equation*}

The following polynomial inequality allows us to compare the second term in Lemma~\ref{lem:r-identity} with the right-hand side of Lemma~\ref{lem:entropy-one-variable} on the square containing $[0,y_0]^2$.

\begin{lemma}
\label{lem:WUV}
For $0\le x,y\le43/64$,
\begin{equation}
 WUV\ge1-r(x,y).
 \label{eq:WUV}
\end{equation}
\end{lemma}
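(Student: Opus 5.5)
The plan is to prove \eqref{eq:WUV} as a polynomial inequality on the square $[0,43/64]^2$. I would work with
\[
 F(x,y):=WUV-1+r(x,y),
\]
which is symmetric in $x$ and $y$ (interchanging $x$ and $y$ swaps $U$ and $V$ and fixes $W$ and $r$). Each of $W,U,V$ is affine in $x$ for fixed $y$, and $r=xyW$ is quadratic in $x$. So $F$ has degree at most $3$ in each variable separately. The substitution $a=1-x$, $b=1-y\in[21/64,1]$ makes the structure clearer:
\[
 W=1+ab,\quad U=1-b+2ab,\quad V=1-a+2ab,\quad r=(1-a)(1-b)(1+ab).
\]

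Some sample values suggest where the minimum lies. At $x=y=0$ we get $F=8-1=7$. At the far corner $x=y=43/64$ we get $WUV\approx0.87$ and $1-r\approx0.5$. On the edge $x=0$ we have $r=0$ and $F=2(2-y)^2(1-y)-1$, which is positive only for $y$ up to roughly $0.7$. This shows the cutoff $43/64$ is genuinely needed, so the edges are where the inequality is tight.

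The first step is to show that $F$ is nonincreasing in $x$ on the square; by symmetry the same then holds in $y$. We have
\[
 \partial_xF=W_xUV+WU_xV+WUV_x+yU,
\]
with $W_x=-(1-y)\le0$, $U_x=-2(1-y)\le0$, $V_x=-(1-2y)$. The terms $W_xUV$ and $WU_xV$ are nonpositive and of size comparable to $(1-y)$, since $U,V,W$ are bounded below on the square. The only possibly positive contributions are $WUV_x$ when $y>1/2$, and $r_x=yU$. I would bound these against the negative terms. Since $\partial_xF$ has degree at most $2$ in $x$, I would treat it for each fixed $y$ either through its vertex or through endpoint checks combined with the sign of its leading coefficient.

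If monotonicity holds, the minimum of $F$ on the square is attained at $(43/64,43/64)$. There $F$ is an explicit rational number, about $0.37$, so its positivity can be checked exactly.

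If monotonicity fails somewhere, which I expect is the main obstacle near $y$ close to $43/64$ where $V_x>0$ and $yU$ is not small, I would fall back on the Bernstein method. Write $F$ in the tensor Bernstein basis of bidegree $(3,3)$ on $[0,43/64]^2$. The coefficients are rational, and if all are positive the inequality follows at once. If some are negative, subdivide the square (for example into halves or quarters along each axis) and recompute, relying on positivity at the vertices and on the margin observed on the edge $x=0$, where $F(0,43/64)\approx0.16$. This yields a finite exact rational certificate, in the same spirit as the $g(43/64)$ computation used to locate $y_0$.
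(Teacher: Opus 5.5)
Your monotonicity step is false, and your own sample values already refute it. On the edge $y=43/64$,
\[
 F(0,43/64)=2\Bigl(\frac{85}{64}\Bigr)^2\frac{21}{64}-1=\frac{20653}{131072}\approx0.158,
 \qquad
 F(43/64,43/64)\approx0.372 .
\]
So $F$ increases in $x$ along that edge. Monotonicity in $x$ alone would force $F(0,43/64)\ge F(43/64,43/64)$, so no analysis of the vertex or sign of $\partial_xF$ can rescue this step. The minimum of $F$ on $[0,43/64]^2$ is in fact $20653/131072$, attained at the corners $(0,43/64)$ and $(43/64,0)$, not at the far corner. This is consistent with your remark that the edges are where the inequality is tight. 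The monotonicity route should therefore be dropped, not patched.

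Your fallback is exactly the paper's proof, and it needs no subdivision. After rescaling $x=(43/64)s$, $y=(43/64)t$, all sixteen product Bernstein coefficients of bidegree $(3,3)$ are strictly positive; they are listed in Proposition~\ref{prop:bernstein-certificate}. The smallest is the corner coefficient $20653/131072$, which equals $F(0,43/64)$, so the Bernstein lower bound is attained and the certificate is sharp. What your proposal still lacks is that exact rational computation, which is the entire content of the lemma.
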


\begin{proof}
After the rescaling
\[
 x=\frac{43}{64}s,
 \qquad
 y=\frac{43}{64}t,
 \qquad
 0\le s,t\le1,
\]
the polynomial $WUV-(1-r)$ has bidegree $(3,3)$. Proposition~\ref{prop:bernstein-certificate} gives its exact coefficient matrix in the product Bernstein basis of degree $(3,3)$; every coefficient is strictly positive. Since every Bernstein basis function is nonnegative on $[0,1]^2$, the polynomial is nonnegative there. Rescaling back proves \eqref{eq:WUV}.
\end{proof}

For the case $g(y)\ge 1/2$, define
\[
 G(v):=e^vH(e^{-v})\qquad(v>0).
\]
The following monotonicity property of $M$ handles this case.

\begin{lemma}
\label{lem:M-decreasing}
The function $M(u)=uH'(u)/H(u)$ is strictly decreasing on $(0,1)$.
\end{lemma}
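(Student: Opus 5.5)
The plan is to differentiate $M$ directly and reduce the sign of $M'$ to an elementary inequality, handling $u\le 1/2$ and $u>1/2$ separately. An alternative route, suggested by the function $G$, would be to note that with $u=e^{-v}$ one has $\frac{\dd}{\dd v}\log G(v)=1-M(e^{-v})$, so the claim is equivalent to strict concavity of $\log G$. I would keep the direct computation as the main line, since it stays closest to the notation already used in the proof of Lemma~\ref{lem:entropy-one-variable}.

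\textbf{Reducing to a numerator.} For $u\in(0,1)$ we have $H(u)>0$. Using $H''(u)=-1/(u(1-u))$,
\[
 M'(u)=\frac{(H'+uH'')H-u(H')^2}{H^2}
 =\frac{\bigl(H'-\tfrac1{1-u}\bigr)H-u(H')^2}{H^2}.
\]
Reuse the notation $\lambda=\log\frac{1-u}{u}$ and $q=-\log(1-u)$, now for all $u\in(0,1)$, so that $\lambda$ may be negative. Then $H'=\lambda$ and $H=u\lambda+q$. Expanding the numerator, the terms $u\lambda^2$ cancel, and the numerator becomes
\[
 \frac{1}{1-u}\Bigl(\lambda q(1-u)-u\lambda-q\Bigr)=\frac{D}{1-u},
 \qquad
 D:=-\lambda a-q,
\]
where
\[
 a:=u+(1-u)\log(1-u).
\]
So it suffices to prove $D<0$ on $(0,1)$.

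\textbf{Sign of $a$.} The function $a$ satisfies $a(0)=0$ and $a'(u)=-\log(1-u)\ge0$, so $a\ge0$ on $[0,1)$. This is the same elementary estimate $-\log(1-u)\le u/(1-u)$ used earlier in the paper.

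\textbf{Case $0<u\le1/2$.} Here $\lambda\ge0$ and $q>0$, so $D=-\lambda a-q<0$ immediately.

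\textbf{Case $1/2<u<1$.} This is the only step needing care, since now $\lambda<0$ and the term $-\lambda a$ is nonnegative. Put $w:=1-u\in(0,1/2)$ and $L:=-\log w=q>0$. Then
\[
 -\lambda=\log\frac{1-w}{w}=L+\log(1-w),
 \qquad
 a=1-w-wL .
\]
The inequality $D<0$ becomes
\[
 \bigl(L+\log(1-w)\bigr)(1-w-wL)<L.
\]
Since $a\ge0$ and $\log(1-w)<0$, the left side is at most $L(1-w-wL)$. This equals $L-wL(1+L)$, which is strictly less than $L$ because $w>0$ and $L>0$.

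Hence $D<0$ on all of $(0,1)$, so $M'<0$ and $M$ is strictly decreasing. I expect no serious obstacle. The only delicate point is the algebraic simplification of the numerator and checking the sign conventions in the regime $u>1/2$, where $\lambda$ changes sign.
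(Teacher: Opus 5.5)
Your proof is correct, and it takes a different route from the paper's. The paper uses exactly the alternative you mention and set aside. With $u=e^{-v}$ it expands $G(v)=e^vH(e^{-v})=v+1-\sum_{k\ge1}e^{-kv}/(k(k+1))$ and differentiates termwise to get $G''<0$. It concludes that $\log G$, and hence $v\mapsto\log H(e^{-v})=-v+\log G(v)$, is strictly concave, and identifies the second derivative of the latter as $e^{-v}M'(e^{-v})$.

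You instead compute $M'$ in closed form, $M'(u)=D/\bigl((1-u)H(u)^2\bigr)$ with $D=-\lambda a-q$. You then settle the sign of $D$ using the single estimate $a\ge0$ and a case split at $u=1/2$. The cancellation of the $u\lambda^2$ terms, the identity $D=-\lambda a-q$, and the rewriting in terms of $w,L$ are all correct.

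The paper's argument is shorter once the series for $G$ is available. That series, with $G''<0$, is needed again in Lemma~\ref{lem:G-properties} and Section~6, so it costs nothing extra there. Your argument is finite and fully elementary and needs no justification of termwise differentiation. It also gives an explicit formula for $M'$ in the same $\lambda,q$ notation as Lemma~\ref{lem:entropy-one-variable}.

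The case split is in fact unnecessary. Writing $-\lambda=\log u+q$ gives $D=a\log u-q(1-a)$ with $1-a=(1-u)(1+q)>0$, so both terms are negative on all of $(0,1)$.
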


\begin{proof}
Put $u=e^{-v}$.  By the definition above,
\[
 G(v)=\frac{H(u)}u,
 \qquad
 H(e^{-v})=e^{-v}G(v).
\]
Hence
\[
 \log H(e^{-v})=-v+\log G(v).
\]
Using $-\log(1-z)=\sum_{k\ge1}z^k/k$ with $z=e^{-v}$ gives
\[
 G(v)=v+1-\sum_{k\ge1}\frac{e^{-kv}}{k(k+1)}.
\]
Termwise differentiation yields
\[
 G''(v)=-\sum_{k\ge1}\frac{k e^{-kv}}{k+1}<0.
\]
Since $G(v)>0$,
\[
 (\log G)''=\frac{G''}{G}-\left(\frac{G'}G\right)^2<0:
\]
the first term is strictly negative and the second is nonpositive.  Thus $\log G$ is strictly concave.  The function $v\mapsto-v$ is affine, and adding an affine function preserves strict concavity; hence $v\mapsto\log H(e^{-v})=-v+\log G(v)$ is strictly concave.

Its derivatives can be computed directly.  Since $u=e^{-v}$ and $u'=-u$,
\[
 \frac{\dd}{\dd v}\log H(e^{-v})
 =\frac{H'(u)}{H(u)}(-u)
 =-\frac{uH'(u)}{H(u)}
 =-M(u).
\]
Differentiating once more,
\[
 \frac{\dd^2}{\dd v^2}\log H(e^{-v})
 =-M'(u)u'
 =uM'(u)
 =e^{-v}M'(e^{-v}).
\]
Strict concavity makes this quantity negative.  Since $e^{-v}>0$, we obtain $M'(e^{-v})<0$, and therefore $M'(u)<0$ for every $0<u<1$.

\end{proof}

\begin{proof}[Proof of Lemma~\ref{lem:R}]
Both sides of \eqref{eq:R} are continuous on $[0,1]^2$, so the boundary inequality follows once the inequality is proved in the open square. On the diagonal, $g(x)=r(x,x)$ gives equality. Thus it remains only to prove the strict inequality for $x\ne y$ in the open square. By symmetry assume $0<x<y<1$ and define
\[
 \mathcal L(x,y):=\log H(r(x,y))
\]
and
\begin{equation}
 \Delta_x(y):=\mathcal L(x,y)-\frac12\mathcal L(x,x)-\frac12\mathcal L(y,y).
 \label{eq:Deltax-def}
\end{equation}
Since $g(z)=r(z,z)$, the strict inequality in Lemma~\ref{lem:R} is
\[
 2\mathcal L(x,y)>\mathcal L(x,x)+\mathcal L(y,y),
\]
which is exactly $\Delta_x(y)>0$.  Moreover $\Delta_x(x)=0$.  Thus it suffices to prove $\Delta_x'(s)>0$ for every $s\in(x,y]$, because then
\[
 \Delta_x(y)=\Delta_x(x)+\int_x^y\Delta_x'(s)\,\dd s>0.
\]
After renaming $s$ as $y$, the remaining goal is therefore
\[
 \Delta_x'(y)>0\qquad(0<x<y<1).
\]

Differentiate \eqref{eq:Deltax-def} with respect to $y$.  The term $\mathcal L(x,x)$ is constant, while symmetry gives $\mathcal L_1(y,y)=\mathcal L_2(y,y)$.  Hence
\begin{equation}
 \Delta_x'(y)=\mathcal L_2(x,y)-\mathcal L_2(y,y).
 \label{eq:Deltax-derivative}
\end{equation}
We split the proof into two cases according to the value of $g(y)$.

\smallskip
\noindent\emph{Case A: $g(y)<1/2$.}  Then $y<y_0$, and for every $s\in[x,y]$ we have $s,y\le y_0$ and
\[
 r(s,y)\le r(y,y)=g(y)<1/2.
\]
By definition, $\mathcal L(x,y)=\log H(r(x,y))=\Lambda(r(x,y))$, so $\mathcal L=\Lambda\circ r$.  The chain rule gives
\[
 \mathcal L_{12}=\Lambda''(r)r_xr_y+\Lambda'(r)r_{xy}
 =\Lambda'(r)r_xr_y
 \left(\frac{r_{xy}}{r_xr_y}+\frac{\Lambda''(r)}{\Lambda'(r)}\right).
\]
Here $\Lambda'(r)=H'(r)/H(r)>0$ because $r<1/2$, and $r_x,r_y>0$.  By Lemmas~\ref{lem:r-identity} and~\ref{lem:WUV},
\[
 \frac{r_{xy}}{r_xr_y}
 =\frac1r+\frac1{WUV}
 \le\frac1r+\frac1{1-r}.
\]
Lemma~\ref{lem:entropy-one-variable} gives
\[
 \frac{\Lambda''(r)}{\Lambda'(r)}
 <-\frac1r-\frac1{1-r}.
\]
Therefore $\mathcal L_{12}(s,y)<0$ for every $s\in[x,y]$.  Apply the fundamental theorem of calculus to the function $s\mapsto\mathcal L_2(s,y)$:
\[
 \mathcal L_2(y,y)-\mathcal L_2(x,y)=\int_x^y\mathcal L_{12}(s,y)\,\dd s<0.
\]
Thus $\mathcal L_2(x,y)>\mathcal L_2(y,y)$, and equation~\eqref{eq:Deltax-derivative} gives $\Delta_x'(y)>0$.

\smallskip
\noindent\emph{Case B: $g(y)\ge1/2$.}  By Lemma~\ref{lem:M-decreasing}, $M$ is strictly decreasing on $(0,1)$.

Next, differentiating $\mathcal L$ with respect to its second variable gives
\begin{equation}
 \mathcal L_2(x,y)=\frac{r_y(x,y)}{r(x,y)}M(r(x,y))
 =A_x(y)M(r(x,y)),
 \label{eq:K2-M}
\end{equation}
where
\begin{equation}
 A_x(y):=\frac1y-
 \frac{1-x}{1+(1-x)(1-y)}.
 \label{eq:Ax-def}
\end{equation}
For $0<x<y<1$, the derivative $r_y$ is positive, so $A_x(y)>0$.  Moreover the function
\[
 u\longmapsto\frac{u}{1+u(1-y)}
\]
is strictly increasing for $u\ge0$.  Since $1-x>1-y$, equation \eqref{eq:Ax-def} gives
\begin{equation}
 0<A_x(y)<A_y(y).
 \label{eq:A-order}
\end{equation}
Also $r(x,y)<r(y,y)=g(y)$ because $r$ is strictly increasing in its first variable.

If $r(x,y)<1/2\le g(y)$, then
\[
 M(r(x,y))>0\ge M(g(y)),
\]
so combining \eqref{eq:K2-M} with \eqref{eq:A-order} gives
\[
 \mathcal L_2(x,y)>\mathcal L_2(y,y).
\]
If $1/2\le r(x,y)<g(y)$, then the strict decrease of $M$ gives
\[
 M(g(y))<M(r(x,y))\le0.
\]
Therefore
\[
 A_x(y)M(r(x,y))
 >A_x(y)M(g(y))
 >A_y(y)M(g(y)),
\]
where the second inequality reverses because $M(g(y))<0$ and $A_x(y)<A_y(y)$.  Again $\mathcal L_2(x,y)>\mathcal L_2(y,y)$.  Thus $\Delta_x'(y)>0$ in every case.

Both cases give $\Delta_x'(y)>0$. Hence the integral identity following \eqref{eq:Deltax-def} gives $\Delta_x(y)>0$, which proves \eqref{eq:R} in the open square.  Continuity gives the boundary inequality.
\end{proof}

\section{The second pointwise inequality at Liu's parameter}

The goal of this section is to prove Lemma~\ref{lem:S}. We first rewrite its inequality as $F(t,d)\ge0$. Two analytic lemmas prove positivity in three boundary regions. We then treat the zero forced by the diagonal tangency and use the computational results of Appendix~A on the compact part left over. These regions cover the entire domain $t>0$, $0\le d\le t$.

By symmetry it is enough to consider $0<x\le y<1$. Introduce logarithmic coordinates
\begin{equation}
 x=e^{-(t+d)},
 \qquad
 y=e^{-(t-d)},
 \qquad
 t>0,\quad0\le d<t.
 \label{eq:td-coordinates}
\end{equation}
Recall from Section~4 that
\[
 G(u)=e^uH(e^{-u})=\frac{H(e^{-u})}{e^{-u}}
 \qquad(u>0),
\]
and define
\[
 \Gamma_5(u):=\frac{\sqrt{H(g(e^{-u}))}}{e^{-u}}.
\]
Both functions have continuous extensions at $u=0$, with $G(0)=\Gamma_5(0)=0$. Define
\begin{equation}
 F(t,d):=
 m_*\bigl[\alpha G(2t)+\beta\Gamma_5(t+d)\Gamma_5(t-d)\bigr]
 -\frac{G(t+d)+G(t-d)}2.
 \label{eq:F-def}
\end{equation}

The next proposition records the reduction used throughout the section.

\begin{proposition}
\label{prop:F-reduction}
For $0<x\le y<1$, the change of variables \eqref{eq:td-coordinates} is a bijection with $t>0$, $0\le d<t$, and inequality \eqref{eq:S} is equivalent to $F(t,d)\ge0$. The formula \eqref{eq:F-def} extends continuously to $d=t$, so below we prove $F(t,d)\ge0$ on the larger domain $t>0$, $0\le d\le t$.
\end{proposition}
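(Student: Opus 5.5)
The plan is to verify the change of variables directly, then obtain the equivalence by dividing \eqref{eq:S} by a positive factor, and finally check the continuous extension at $d=t$.

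\emph{Bijection.} The map $(t,d)\mapsto(x,y)=(e^{-(t+d)},e^{-(t-d)})$ has inverse
\[
 t=-\tfrac12\log(xy),\qquad d=\tfrac12\log(y/x).
\]
For $0<x\le y<1$ we get $xy<1$, hence $t>0$. We also get $y/x\ge1$, hence $d\ge0$. Finally $t-d=-\log y>0$, hence $d<t$. Conversely, if $t>0$ and $0\le d<t$, then $t\pm d>0$, so both coordinates lie in $(0,1)$, and $t+d\ge t-d$ gives $x\le y$. The two maps are mutually inverse.

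\emph{Equivalence.} Multiply \eqref{eq:S} by $m_*/(2xy)>0$; here $m_*>0$ because $0<x_*<1$. The term $2\alpha H(xy)$ becomes
\[
 m_*\alpha\,\frac{H(xy)}{xy}=m_*\alpha\,G(2t),
\]
since $xy=e^{-2t}$. The term $2\beta\sqrt{H(g(x))H(g(y))}$ becomes
\[
 m_*\beta\,\frac{\sqrt{H(g(x))}}{x}\cdot\frac{\sqrt{H(g(y))}}{y}=m_*\beta\,\Gamma_5(t+d)\,\Gamma_5(t-d).
\]
On the right-hand side, $(xH(y)+yH(x))/m_*$ becomes
\[
 \tfrac12\left(\frac{H(y)}{y}+\frac{H(x)}{x}\right)=\tfrac12\bigl(G(t-d)+G(t+d)\bigr).
\]
Subtracting shows that \eqref{eq:S} holds if and only if $F(t,d)\ge0$. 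Each step is a multiplication by a strictly positive quantity, so the equivalence is exact.

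\emph{Continuity at $d=t$.} We need the limits of $G(u)$ and $\Gamma_5(u)$ as $u\downarrow0$. Write $z=e^{-u}$, so that $1-z\sim u$.

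For $G$, we have $G(u)=H(z)/z$ and $H(z)\sim(1-z)\log\frac1{1-z}\to0$, so $G(u)\to0$.

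For $\Gamma_5$, $g$ is continuous with $g(1)=1$, so $H(g(z))\to H(1)=0$. Since $H$ is continuous on $[0,1]$, the function $\Gamma_5(u)=\sqrt{H(g(z))}/z$ tends to $0$. This gives $G(0)=\Gamma_5(0)=0$.

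Setting these values gives continuous extensions of $G$ and $\Gamma_5$ to $[0,\infty)$. Hence $F$ extends continuously to the closed domain $t>0$, $0\le d\le t$, with
\[
 F(t,t)=m_*\alpha\,G(2t)-\tfrac12 G(2t).
\]
Proving $F\ge0$ on this larger set is therefore a legitimate strengthening: its restriction to $d<t$ is exactly \eqref{eq:S}.

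There is no genuine obstacle here. The only care needed is to confirm positivity of the multiplier, which needs $m_*>0$ and is clear from \eqref{eq:mstar}, and to justify the limit of $\Gamma_5$ at $0$ via $g(1)=1$ and the continuity of $H$ at $1$.
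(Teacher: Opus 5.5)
Your proposal is correct and follows the paper's proof: multiply \eqref{eq:S} by $m_*/(2xy)$, then substitute the three identities giving $G(2t)$, $\Gamma_5(t+d)\Gamma_5(t-d)$ and $\tfrac12\bigl(G(t+d)+G(t-d)\bigr)$. You also verify the bijection and the continuous extension to $d=t$ via $G(0)=\Gamma_5(0)=0$; the paper only asserts these before defining $F$, so your version fills in those routine checks.
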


\begin{proof}
From \eqref{eq:td-coordinates}, $xy=e^{-2t}$ and
\[
 \frac{H(xy)}{xy}=G(2t).
\]
Moreover,
\[
 \frac{\sqrt{H(g(x))H(g(y))}}{xy}
 =\frac{\sqrt{H(g(e^{-(t+d)}))}}{e^{-(t+d)}}
  \frac{\sqrt{H(g(e^{-(t-d)}))}}{e^{-(t-d)}}
 =\Gamma_5(t+d)\Gamma_5(t-d),
\]
and
\[
 \frac{xH(y)+yH(x)}{2xy}
 =\frac12\left(\frac{H(y)}y+\frac{H(x)}x\right)
 =\frac{G(t-d)+G(t+d)}2.
\]
Multiplying \eqref{eq:S} by the positive factor $m_*/(2xy)$ gives
\begin{align*}
 &m_*\left[
 \alpha\frac{H(xy)}{xy}
 +\beta\frac{\sqrt{H(g(x))H(g(y))}}{xy}
 \right]
 -\frac12\left(\frac{H(y)}y+\frac{H(x)}x\right)\ge0.
\end{align*}
Substituting the three identities just proved turns the left-hand side into
\[
 m_*\bigl[\alpha G(2t)+\beta\Gamma_5(t+d)\Gamma_5(t-d)\bigr]
 -\frac{G(t+d)+G(t-d)}2
 =F(t,d).
\]
Thus \eqref{eq:S} is equivalent to $F(t,d)\ge0$.
\end{proof}

To obtain the two analytic boundary estimates, we need monotonicity, concavity, and elementary bounds for $G$.

\begin{lemma}
\label{lem:G-properties}
The function $G$ is increasing and concave on $[0,\infty)$, and strictly increasing and strictly concave on $(0,\infty)$.  For every $u\ge0$,
\[
 u+1-e^{-u}\le G(u)\le u+1.
\]
\end{lemma}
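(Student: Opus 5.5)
The plan is to reuse the series representation already obtained in the proof of Lemma~\ref{lem:M-decreasing}. For $u>0$, with $z=e^{-u}\in(0,1)$, expanding $-\log(1-z)=\sum_{k\ge1}z^k/k$ inside $G(u)=H(z)/z=u+\frac{1-z}{z}\bigl(-\log(1-z)\bigr)$ gives
\[
 G(u)=u+1-\sum_{k\ge1}\frac{e^{-ku}}{k(k+1)}.
\]
Since $\sum_{k\ge1}1/(k(k+1))=1$, this series converges uniformly on $[0,\infty)$. At $u=0$ it returns $G(0)=0$, which agrees with the continuous extension, so the formula holds on all of $[0,\infty)$.

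Monotonicity and concavity come from differentiating termwise. This is justified on $(0,\infty)$ because the differentiated series are dominated by geometric series on each $[u_0,\infty)$ with $u_0>0$. We get
\[
 G'(u)=1-\sum_{k\ge1}\frac{e^{-ku}}{k+1},
 \qquad
 G''(u)=-\sum_{k\ge1}\frac{ke^{-ku}}{k+1}<0 .
\]
So $G$ is strictly concave on $(0,\infty)$. For monotonicity, note that $\sum_{k\ge1}e^{-ku}/(k+1)<\sum_{k\ge1}1/(k(k+1))=1$ for $u>0$, since $1/(k+1)\le 1/(k(k+1))\cdot k$. A cleaner route is the closed form
\[
 \sum_{k\ge1}\frac{z^k}{k+1}=\frac{-\log(1-z)}{z}-1.
\]
With this, $G'(u)>0$ becomes $-\log(1-z)<2z$, which fails near $z=1$, so this route does not work directly. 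Instead I would use that $G'$ is strictly decreasing, by strict concavity, and that $G'(u)\to1$ as $u\to\infty$. Together these give $G'(u)>1>0$ is wrong in direction, since a decreasing function tending to $1$ stays above $1$. That is in fact correct: $G'(u)\ge\lim_{v\to\infty}G'(v)=1>0$. So $G$ is strictly increasing on $(0,\infty)$. Increasing and concave on the closed half-line then follow by continuity at $0$.

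The bounds come directly from the series, since every term $e^{-ku}/(k(k+1))$ is nonnegative.
\begin{itemize}
\item \emph{Upper bound.} Dropping the series gives $G(u)\le u+1$.
\item \emph{Lower bound.} Since $e^{-ku}\le e^{-u}$ for $k\ge1$ and $u\ge0$, we have $\sum_k e^{-ku}/(k(k+1))\le e^{-u}\sum_k1/(k(k+1))=e^{-u}$, hence $G(u)\ge u+1-e^{-u}$.
\end{itemize}
At $u=0$ both bounds reduce to the consistent statements $0\le0\le1$.

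The only delicate step is the sign of $G'$. I would settle it by the limit argument above: strict concavity makes $G'$ decreasing, and $G'(u)\to1$ as $u\to\infty$, so $G'\ge1$ throughout. This avoids any crude termwise comparison near $u=0$, where $G'$ actually blows up like $-\log(1-z)\to\infty$, which is consistent with the bound $G'\ge1$.
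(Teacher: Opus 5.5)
Your route is the paper's: the same series for $G$, termwise differentiation, and the same termwise comparisons for the two bounds. Your fallback monotonicity argument (strict concavity together with $G'(u)\to1$ as $u\to\infty$) is valid, so the conclusion stands.

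However, your displayed $G'$ has a sign error. Differentiating $-e^{-ku}/(k(k+1))$ gives $+e^{-ku}/(k+1)$, so
\[
 G'(u)=1+\sum_{k\ge1}\frac{e^{-ku}}{k+1}=-e^{u}\log(1-e^{-u})>1,
\]
and positivity is immediate, as in the paper. The detour through the false bound $\sum_k e^{-ku}/(k+1)<1$ and the failed inequality $-\log(1-z)<2z$ comes only from that slip and should be deleted. As written, your $G'$ contradicts your own (correct) $G''$, since it would differentiate to a positive function. It also contradicts your remark that $G'\to+\infty$ as $u\to0^+$, since your formula tends to $-\infty$ there.
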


\begin{proof}
Starting from the definition of $H$ with $x=e^{-u}$,
\[
 G(u)=\frac{-x\log x-(1-x)\log(1-x)}x
 =u+(e^u-1)\bigl[-\log(1-e^{-u})\bigr].
\]
Using $-\log(1-z)=\sum_{k\ge1}z^k/k$ at $z=e^{-u}$ and multiplying by $e^u-1$ gives
\begin{equation}
 G(u)=u+1-\sum_{k\ge1}\frac{e^{-ku}}{k(k+1)}.
 \label{eq:G-series}
\end{equation}
The series and its first two derivatives converge locally uniformly for $u>0$, so termwise differentiation is valid there.  We obtain
\[
 G'(u)=1+\sum_{k\ge1}\frac{e^{-ku}}{k+1}>0,
 \qquad
 G''(u)=-\sum_{k\ge1}\frac{k e^{-ku}}{k+1}<0.
\]
This proves strict monotonicity and strict concavity on $(0,\infty)$.  Together with $G(0)=0$, these properties extend in the non-strict form to $[0,\infty)$.

Finally, $e^{-ku}\le e^{-u}$ for $k\ge1$, and
\[
 \sum_{k\ge1}\frac1{k(k+1)}
 =\sum_{k\ge1}\left(\frac1k-\frac1{k+1}\right)=1.
\]
Hence
\[
 0\le\sum_{k\ge1}\frac{e^{-ku}}{k(k+1)}
 \le e^{-u}\sum_{k\ge1}\frac1{k(k+1)}=e^{-u}.
\]
Writing the series in \eqref{eq:G-series} as $S(u)$, the estimate just obtained is $0\le S(u)\le e^{-u}$. Hence
\[
 u+1-e^{-u}\le u+1-S(u)=G(u)\le u+1,
\]
which gives the two stated bounds.
\end{proof}

We first prove strict positivity when $t$ is small.

\begin{lemma}
\label{lem:F-small-t}
For $0<t\le0.002$ and $0\le d\le t$ one has $F(t,d)>0$.
\end{lemma}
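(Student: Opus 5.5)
The plan is to treat $F$ near the corner $t\to0$ by explicit asymptotic bounds rather than by computation. Write $s:=t-d\in[0,t]$, so that $t+d=2t-s\in[t,2t]$. Along the edge $d=t$ the Example~5 term vanishes, because $\Gamma_5(0)=0$, and there
\[
 F(t,t)=\Bigl(m_*\alpha-\tfrac12\Bigr)G(2t).
\]
So the first step is to record that $m_*\alpha>1/2$. This is the endpoint case $y=1$ of \eqref{eq:S}, and I expect it to follow from the numerical values of $\alpha,\beta$ certified in Proposition~\ref{prop:two-construction-certificate}. The margin $\kappa:=m_*\alpha-\frac12>0$ is the slack that drives everything else.

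\textbf{Splitting the $G$ terms.} By Lemma~\ref{lem:G-properties}, $G$ is increasing, so $G(t+d)=G(2t-s)\le G(2t)$. Hence
\[
 F(t,d)\ \ge\ \kappa\,G(2t)\;+\;m_*\beta\,\Gamma_5(2t-s)\Gamma_5(s)\;-\;\tfrac12 G(s),
\]
and it remains to show that the last two terms together are $>-\kappa G(2t)$. The mechanism is that $\Gamma_5$ behaves like a square root near $0$ while $G$ is only of size $u\log(1/u)$.

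\textbf{Explicit bounds near $0$.} For small $u$, put $x=e^{-u}$. Then $1-g(x)=1-x^2-x^2(1-x)^2\ge 1-x^2-(1-x)^2$, which is of order $2u$. I will use the elementary inequality $H(z)\ge z\log(1/z)$, applied to the smaller of $g(x)$ and $1-g(x)$. Together with $1-x\ge u-u^2/2$, this gives a bound of the form
\[
 \Gamma_5(u)\ \ge\ c_1\sqrt{u\log(1/u)}\qquad(0<u\le 0.004)
\]
with an explicit constant $c_1$ (roughly $\sqrt2$, minus a small loss). On the other side, the closed form $G(u)=u+(e^u-1)(-\log(1-e^{-u}))$ and $1-e^{-u}\ge u-u^2/2$ give
\[
 G(u)\le u\bigl(\log(1/u)+c_2\bigr)
\]
on the same range.

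\textbf{Two regimes in $s$.} Here $2t-s\ge t$, so $\Gamma_5(2t-s)\ge c_1\sqrt{t\log(1/(2t))}$.
\begin{itemize}
\item If $s\ge\varepsilon t$ for a fixed small $\varepsilon$, the cross term is at least of order $t\log(1/t)$. It then beats $\frac12G(s)\le\frac12G(t)$ once the product of the constants exceeds $1/2$. This has to be checked at $t=0.002$, where $\log(1/t)\approx6.2$, and then propagated to smaller $t$ by monotonicity in $\log(1/t)$.
\item If $s<\varepsilon t$, then $\frac12G(s)\le\frac12 s(\log(1/s)+c_2)$. This is at most $\kappa G(2t)\ge\kappa\cdot 2t\log(1/(2t))$, up to a factor that $\varepsilon$ can be tuned to control. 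In addition, the $\sqrt{s}$ cross term is positive and dominates $s\log(1/s)$ as $s\to0$.
\end{itemize}
Strictness comes from $\kappa G(2t)>0$ for $t>0$.

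\textbf{Main obstacle.} I expect the hardest part to be making the constants honest at the top of the range, $t=0.002$. There the logarithms are only about $6$, so asymptotic dominance is weak and the bounds on $\Gamma_5$ (the lower bound on $H(g(e^{-u}))$ for $g$ near $1$) must be reasonably tight. If the crude inequality $H(z)\ge z\log(1/z)$ loses too much, I would first try the sharper bound $H(z)\ge z\log(1/z)+z-z^2$. Failing that, I would replace the two-regime split by a single explicit inequality in the ratio $\sigma=s/t\in[0,1]$, to be verified by a one-variable monotonicity argument.
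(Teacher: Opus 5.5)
\textbf{The gap is in your first reduction.} Bounding $G(t+d)\le G(2t)$ by monotonicity discards $\frac12\bigl(G(2t)-G(t+d)\bigr)$. When $d$ is small compared with $t$, this loss is of order $t\log(1/t)$, which is the same order as the whole margin and larger than it.

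Concretely, at $d=0$ (your $s=t$) the reduced inequality reads
\[
 \kappa G(2t)+m_*\beta\,\Gamma_5(t)^2-\frac12G(t)>0 .
\]
As $t\to0$ one has $G(2t)\sim2t\log(1/t)$, $\Gamma_5(t)^2\sim2t\log(1/t)$ and $G(t)\sim t\log(1/t)$. So the left side behaves like $\bigl(2\kappa+2m_*\beta-\frac12\bigr)t\log(1/t)=\bigl(2m_*-\frac32\bigr)t\log(1/t)$. This is negative because $m_*\approx0.617<3/4$. At $t=0.002$ it is roughly $0.0015+0.0016-0.0072<0$.

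Hence Regime~1 cannot be closed. Even at $s=t$, the total coefficient of $t\log(1/t)$ on the positive side is only $2m_*-1\approx0.235$, never above the $1/2$ you need. The reduced inequality is false with the exact values of $G$ and $\Gamma_5$. So neither sharper bounds on $\Gamma_5$ (the obstacle you flagged) nor a reparametrization by $\sigma=s/t$ can rescue it.

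\textbf{The missing idea is to use concavity instead of monotonicity.} The arguments $t\pm d$ average to $t$, so Lemma~\ref{lem:G-properties} gives $\frac12\bigl(G(t+d)+G(t-d)\bigr)\le G(t)$, which costs nothing at $d=0$. After this step the Example~5 term can simply be discarded. What remains is the one-variable inequality $c_0G(2t)-G(t)>0$ with $c_0=m_*\alpha$, whose leading behaviour is $(2c_0-1)t\log(1/t)>0$.

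The paper makes this effective with the bounds $G(t)\le t+te^t(t-\log t)$ and $G(2t)\ge2t(1-\log(2t))$. These give $F(t,d)\ge t\zeta(t)$ with $\zeta$ decreasing on $(0,0.002]$, and the paper then checks $\zeta(0.002)>0$ numerically. No case split in $d$ and no lower bound on $\Gamma_5$ is needed.
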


\begin{proof}
Put
\[
 c_0:=m_*\alpha.
\]
The interval bounds in Proposition~\ref{prop:two-construction-certificate} imply
\[
 c_0>0.5555293761467>\frac12.
\]
The two arguments $t+d$ and $t-d$ have average $t$.  Since $G$ is concave by Lemma~\ref{lem:G-properties},
\[
 \frac{G(t+d)+G(t-d)}2\le G(t).
\]
The factor $\beta$ and the function $\Gamma_5$ are nonnegative, so the corresponding term in \eqref{eq:F-def} can be discarded in a lower bound.  Hence
\begin{equation}
 F(t,d)\ge c_0G(2t)-G(t).
 \label{eq:smallt-first}
\end{equation}
We estimate the two values of $G$ explicitly.

The elementary inequality $e^t-1\le te^t$ and the lower bound $e^t-1\ge t$ imply
\[
 1-e^{-t}=e^{-t}(e^t-1)\ge te^{-t}.
\]
Since $-\log$ is decreasing on $(0,\infty)$, from $1-e^{-t}\ge te^{-t}>0$ we get
\[
 -\log(1-e^{-t})\le-\log(te^{-t})=-\log t+t.
\]
Using the formula for $G$ at the start of Lemma~\ref{lem:G-properties},
\begin{equation}
 G(t)\le t+te^t(-\log t+t).
 \label{eq:G-small-upper}
\end{equation}
For $2t$, we use $e^{2t}-1\ge2t$ and $1-e^{-2t}\le2t$.  The latter gives $-\log(1-e^{-2t})\ge-\log(2t)$, and therefore
\begin{equation}
 G(2t)\ge2t+2t[-\log(2t)]=2t(1-\log(2t)).
 \label{eq:G-small-lower}
\end{equation}
Define
\[
 \zeta(t):=2c_0(1-\log(2t))-1-e^t(-\log t+t).
\]
Substituting \eqref{eq:G-small-upper} and \eqref{eq:G-small-lower} into \eqref{eq:smallt-first} gives
\[
 F(t,d)\ge t\,\zeta(t).
\]
Differentiation gives
\[
 \zeta'(t)=\frac{e^t-2c_0}{t}-e^t(1-\log t+t).
\]
For $0<t\le0.002$, we have $e^t<1.003<2c_0$.  Thus the first term is negative.  The second is also negative because $1-\log t+t>0$.  Hence $\zeta'(t)<0$ throughout the interval, so $\zeta$ is minimized at $t=0.002$.  Proposition~\ref{prop:two-construction-certificate} gives the certified lower bound
\[
 \zeta(0.002)\ge0.016672474402352748>0.
\]
Since $t>0$, the inequality $F(t,d)\ge t\zeta(t)$ now proves the claim.
\end{proof}

The next estimate covers points close to the edge $d=t$ and all sufficiently large $t$.

\begin{lemma}
\label{lem:F-boundary-tail}
Let $v=t-d$. Then $F(t,d)>0$ whenever one of the following holds:
\begin{enumerate}
 \item[(a)] $0.002\le t\le1.4$ and $0\le v\le5\cdot10^{-5}$;
 \item[(b)] $1.4\le t\le4.1$ and $0\le v\le0.1$;
 \item[(c)] $t\ge4.1$ and $0\le d\le t$.
\end{enumerate}
\end{lemma}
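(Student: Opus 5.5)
Set $u:=t+d=2t-v$, so $x=e^{-u}$ is small and $y=e^{-v}$ is close to $1$ in regions (a) and (b). Discard nothing at first and write
\[
 F(t,d)=m_*\alpha G(2t)+m_*\beta\,\Gamma_5(u)\Gamma_5(v)-\tfrac12G(u)-\tfrac12G(v).
\]
The only negative contributions are $-\tfrac12G(u)-\tfrac12G(v)$. By Lemma~\ref{lem:G-properties}, $G(v)\le v+1-S(v)$ with $S(v)\to1$ as $v\to0$, so $G(v)$ is tiny: $G(v)\le v(1-\log v)+O(v)$, exactly as in the proof of Lemma~\ref{lem:F-small-t}. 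The term $\Gamma_5(v)\ge0$ is dropped, or kept only when it helps. The core comparison is therefore
\[
 m_*\alpha G(2t)-\tfrac12G(2t-v)\quad\text{versus}\quad\tfrac12G(v).
\]
Monotonicity of $G$ gives $G(2t-v)\le G(2t)$, hence $m_*\alpha G(2t)-\tfrac12G(2t-v)\ge(c_0-\tfrac12)G(2t)$ with $c_0=m_*\alpha>0.5555$. So it suffices to prove
\[
 (c_0-\tfrac12)\,G(2t)>\tfrac12\,G(v).
\]

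\emph{Regions (a) and (b).} Here $G(2t)\ge G(2t)|_{t=t_{\min}}$ by monotonicity, and $G(v)\le G(v_{\max})$. I would need $(c_0-\tfrac12)G(2t_{\min})>\tfrac12G(v_{\max})$. In (b) this is $0.0555\,G(2.8)\approx0.0555\cdot3.8\approx0.21$ against $\tfrac12G(0.1)\approx\tfrac12\cdot0.1(1+2.3+\dots)\approx0.17$. That is plausible but tight, and I would sharpen it by using $G(2t-v)\le G(2t)-G'(2t)v$-type concavity slack, or by keeping the $\beta$ term via a lower bound on $\Gamma_5(u)\Gamma_5(v)$. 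In (a), $G(0.004)\approx0.004(1+5.5)\approx0.026$, so the left side is $\approx0.00145$, while $\tfrac12G(5\cdot10^{-5})\approx\tfrac12\cdot5\cdot10^{-5}\cdot 10.9\approx2.7\cdot10^{-4}$, which is fine. To be safe near $t=0.002$ I would split (a) into a few subintervals of $t$ and use the explicit bounds \eqref{eq:G-small-upper} and \eqref{eq:G-small-lower}, with certified constants from Proposition~\ref{prop:two-construction-certificate}.

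\emph{Region (c).} Here $v$ ranges over $[0,t]$, so $G(v)$ is no longer small, and the $\beta$ term must carry weight. Use Lemma~\ref{lem:G-properties} in the form $G(w)=w+1-\theta(w)$ with $\theta(w)\in[0,e^{-w}]$. The independent part alone gives
\[
 m_*\alpha(2t+1-e^{-2t})-\tfrac12(2t+2)=(2c_0-1)t+O(1),
\]
which is positive for large $t$ since $2c_0>1.111$. The $\beta\Gamma_5\Gamma_5$ term is nonnegative and can be discarded. The requirement becomes
\[
 (2c_0-1)t+c_0(1-e^{-2t})-1>0,
\]
and the worst case is $t=4.1$, where it reads $\approx0.111\cdot4.1+0.555-1=0.01$. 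This is positive but with a small margin. If the certified constants are too weak, I would add a lower bound for $\Gamma_5$. Since $g(x)\approx2x^2$ for small $x$, one has $\Gamma_5(w)\gtrsim\sqrt{2w}$ for large $w$, which gives a term of order $\sqrt{t}$ that covers the shortfall. I would also use the sharper $\theta(w)\ge e^{-w}/2$ for the $G(2t)$ term.

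\textbf{Main obstacle.} The hardest part is the tightness at the junctions $t=1.4$ in (b) and $t=4.1$ in (c), where the crude monotonicity bounds leave margins of a few hundredths. I would first try the simple bounds with the certified interval values of $m_*,\alpha,\beta$. Failing that, I would bring in the $\beta$ term with an explicit elementary lower bound on $H(g(e^{-w}))$, and subdivide the $t$-range so that monotone endpoint evaluations suffice.
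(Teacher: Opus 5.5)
Your proposal is correct and follows the paper's proof essentially step for step: discard the $\beta\Gamma_5(u)\Gamma_5(v)$ term, use $G(u)\le G(2t)$ to reduce (a) and (b) by monotonicity to the endpoint values $(c_0-\tfrac12)G(2t_{\min})-\tfrac12G(v_{\max})$, and for (c) combine $G(2t)\ge2t+1-e^{-2t}$ with $G(u)+G(v)\le2t+2$ and monotonicity in $t$ to reduce to $c_0(9.2-e^{-8.2})-5.1>0$. The margins you estimate ($\approx0.0012$, $\approx0.036$, $\approx0.0107$) are exactly those certified in Proposition~\ref{prop:two-construction-certificate}, so none of your fallback refinements is needed; note also that the bound $\theta(w)\ge e^{-w}/2$ goes the wrong way for a lower bound on $G(2t)$.
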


\begin{proof}
Set
\[
 u:=t+d,
 \qquad
 v:=t-d.
\]
Then $u+v=2t$ and $0\le v\le t\le u\le2t$.  From \eqref{eq:F-def},
\[
 F(t,d)=c_0G(2t)
 +m_*\beta\Gamma_5(u)\Gamma_5(v)
 -\frac12G(u)-\frac12G(v).
\]
Since $m_*,\beta,\Gamma_5(u),\Gamma_5(v)$ are nonnegative, deleting their product gives
\[
 F(t,d)\ge c_0G(2t)-\frac12G(u)-\frac12G(v).
\]
Also $u\le2t$ and $G$ is increasing, so $G(u)\le G(2t)$.  Therefore
\[
 F(t,d)
 \ge\left(c_0-\frac12\right)G(2t)-\frac12G(v).
\]
The last expression is increasing in $t$ and decreasing in $v$, because $c_0>1/2$ and $G$ is increasing.  Thus its minimum on each of the two rectangles occurs at the smallest allowed $t$ and the largest allowed $v$.  The corresponding certified interval evaluations, recorded in Proposition~\ref{prop:two-construction-certificate}, are
\[
 \left(c_0-\frac12\right)G(0.004)-\frac12G(5\cdot10^{-5})
 \ge0.0011788376979667077
\]
and
\[
 \left(c_0-\frac12\right)G(2.8)-\frac12G(0.1)
 \ge0.035598095197518845.
\]
The two certified endpoint evaluations prove parts~(a) and~(b).

For part~(c), Lemma~\ref{lem:G-properties} gives
\[
 G(2t)\ge2t+1-e^{-2t}.
\]
It also gives $G(u)\le u+1$ and $G(v)\le v+1$, so, using $u+v=2t$,
\[
 \frac{G(u)+G(v)}2
 \le\frac{(u+1)+(v+1)}2=t+1.
\]
Consequently
\begin{equation}
 F(t,d)\ge c_0(2t+1-e^{-2t})-(t+1).
 \label{eq:tail-bound}
\end{equation}
The derivative of the lower bound in \eqref{eq:tail-bound} is
\[
 2c_0+2c_0e^{-2t}-1>2c_0-1>0,
\]
so this lower bound is increasing in $t$.  At the left endpoint $t=4.1$,
\[
 2t+1-e^{-2t}=9.2-e^{-8.2},
 \qquad
 t+1=5.1.
\]
Thus Proposition~\ref{prop:two-construction-certificate} gives
\[
 c_0(9.2-e^{-8.2})-5.1\ge0.010717682423498683>0.
\]
Hence \eqref{eq:tail-bound} is positive for every $t\ge4.1$, proving part~(c).
\end{proof}

To treat the interior, we first identify the stationary point corresponding to the diagonal tangency at $x=y=x_*$.

Let
\[
 t_*:=-\log x_*.
\]

\begin{lemma}
\label{lem:F-stationary}
The point $(t_*,0)$ satisfies:
\begin{enumerate}
\item[(a)] $F(t_*,0)=0$;
\item[(b)] $F_t(t_*,0)=0$;
\item[(c)] $F_d(t_*,0)=0$.
\end{enumerate}
\end{lemma}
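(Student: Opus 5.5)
At $d=0$ both $F$ and its derivatives are evaluated on the diagonal $x=y=e^{-t}$. There $\Gamma_5(t+d)\Gamma_5(t-d)$ becomes $H(g(x))/x^2$, and $\tfrac12\bigl(G(t+d)+G(t-d)\bigr)$ becomes $H(x)/x$. So the plan is to express everything through the diagonal function
\[
 \phi(x):=m_*\bigl[\alpha H(x^2)+\beta H(g(x))\bigr]-xH(x),
\]
using $F(t,0)=\phi(e^{-t})/e^{-2t}$. This is exactly the diagonal inequality of Section~3 divided by $x^2$. The three claims then follow from how $\alpha,\beta$ and $m_*$ were chosen.

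For (a), note that $H(x_*^2)=H(g(x_*))$ by \eqref{eq:H-equality-star} and $\alpha+\beta=1$. Hence $m_*[\alpha A_0(x_*)+\beta B_0(x_*)]=m_*H(x_*^2)=x_*H(x_*)$ by \eqref{eq:mstar}. So $\phi(x_*)=0$, and therefore $F(t_*,0)=0$.

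For (b), differentiate $F(t,0)=e^{2t}\phi(e^{-t})$:
\[
 F_t(t,0)=2e^{2t}\phi(e^{-t})-e^{t}\phi'(e^{-t}).
\]
At $t_*$ the first term vanishes by (a). The second vanishes because $\phi'(x_*)=0$, and that is precisely the derivative-matching condition defining $\beta$ in \eqref{eq:alpha-beta}. To confirm this, I would check that \eqref{eq:alpha-beta} is equivalent to
\[
 m_*[\alpha A_0'(x_*)+\beta B_0'(x_*)]=H(x_*)+x_*H'(x_*).
\]
Dividing by $m_*=x_*H(x_*)/A_0(x_*)$, this reads $A_0'(x_*)+\beta(B_0'-A_0')(x_*)=A_0(x_*)\bigl(1/x_*+H'(x_*)/H(x_*)\bigr)$, which is \eqref{eq:alpha-beta} solved for $\beta$. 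One also needs $B_0'(x_*)\ne A_0'(x_*)$, which is implicit in \eqref{eq:alpha-beta} being well defined (Proposition~\ref{prop:two-construction-certificate}).

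For (c), I use symmetry. Replacing $d$ by $-d$ swaps $t+d$ and $t-d$, and the formula \eqref{eq:F-def} is invariant under this swap. Hence $d\mapsto F(t,d)$ is even wherever it is defined and differentiable, for $|d|<t$. Since $t_*>0$ (because $x_*\in(0,1)$), $G$ and $\Gamma_5$ are differentiable near $t_*$: $G$ by Lemma~\ref{lem:G-properties}, and $\Gamma_5$ because $H(g(e^{-u}))>0$ there, so the square root is smooth. Therefore $F_d(t,0)=0$ for every $t>0$, and in particular at $t_*$.

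The only step that needs care is (b): confirming that the algebraic form of \eqref{eq:alpha-beta} really encodes $\phi'(x_*)=0$, and keeping the chain-rule factors from $x=e^{-t}$ straight. The rest is immediate.
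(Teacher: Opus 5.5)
Your proposal is correct and follows the paper's proof essentially step for step. The paper also writes $F(t,0)=e^{2t}\Delta_0(e^{-t})$ with your $\phi$ playing the role of $\Delta_0$, reduces (b) to $\Delta_0'(x_*)=0$ and identifies that with the defining equation \eqref{eq:alpha-beta} after dividing by $x_*H(x_*)/A_0(x_*)$, and obtains (c) from the evenness of $F$ in $d$.
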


\begin{proof}
For part~(c), the function $F(t,d)$ is even in $d$, as is clear from \eqref{eq:F-def}; hence $F_d(t,0)=0$ and in particular $F_d(t_*,0)=0$.

For parts~(a) and~(b), put $x=e^{-t}$ and define
\begin{equation*}
 \Delta_0(x):=m_*\bigl[\alpha H(x^2)+\beta H(g(x))\bigr]-xH(x).
\end{equation*}
Setting $d=0$ in \eqref{eq:F-def} and using $G(t)=H(x)/x$ and $\Gamma_5(t)^2=H(g(x))/x^2$ gives
\begin{equation}
 F(t,0)=e^{2t}\Delta_0(e^{-t}).
 \label{eq:F-Delta0}
\end{equation}
At $x=x_*$, equation~\eqref{eq:H-equality-star}, the identity $\alpha+\beta=1$, and the definition of $m_*$ give
\[
 \Delta_0(x_*)=m_*H(x_*^2)-x_*H(x_*)=0.
\]
Thus $F(t_*,0)=0$, proving part~(a).

To prove part~(b), differentiate \eqref{eq:F-Delta0}. Writing $x=e^{-t}$, so $x'=-x$, gives
\[
 F_t(t,0)=e^{2t}\bigl(2\Delta_0(x)-x\Delta_0'(x)\bigr).
\]
At $t=t_*$ the first term vanishes because $\Delta_0(x_*)=0$.  Since $e^{2t_*}x_*>0$, we have
\[
 F_t(t_*,0)=0
 \quad\Longleftrightarrow\quad
 \Delta_0'(x_*)=0.
\]
Finally,
\[
 \Delta_0'(x)=m_*\bigl[\alpha A_0'(x)+\beta B_0'(x)\bigr]-H(x)-xH'(x).
\]
At $x=x_*$, substitute $m_*=x_*H(x_*)/A_0(x_*)$ and $\alpha=1-\beta$.  After division by the positive factor $x_*H(x_*)/A_0(x_*)$, the equation $\Delta_0'(x_*)=0$ becomes
\[
 A_0'(x_*)+\beta\bigl(B_0'(x_*)-A_0'(x_*)\bigr)
 =A_0(x_*)\left(\frac1{x_*}+\frac{H'(x_*)}{H(x_*)}\right),
\]
which is exactly the defining equation \eqref{eq:alpha-beta} for $\beta$.  Hence $F_t(t_*,0)=0$, proving part~(b).
\end{proof}

The two boundary lemmas leave a compact region.  The next result combines the analytic estimates above with Proposition~\ref{prop:two-construction-certificate} to cover it, including the point $(t_*,0)$ where $F$ vanishes.

\begin{lemma}
\label{lem:F-global}
For every $t>0$ and $0\le d\le t$,
\[
 F(t,d)\ge0.
\]
\end{lemma}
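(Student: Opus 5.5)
Lemmas~\ref{lem:F-small-t} and~\ref{lem:F-boundary-tail} already give $F>0$ for $t\le0.002$, for $t\ge4.1$, and in the thin strips near the edge $d=t$. What remains is the compact region
\[
 \mathcal R:=\{0.002\le t\le4.1,\ 0\le d\le t-v_0(t)\},
\]
where $v_0(t)=5\cdot10^{-5}$ for $t\le1.4$ and $v_0(t)=0.1$ for $1.4\le t\le4.1$. On $\mathcal R$ both arguments $t\pm d$ are bounded away from $0$. Hence $G$ and $\Gamma_5$ are real-analytic there, and interval arithmetic applies. I would split $\mathcal R$ into two parts: a small neighbourhood $\mathcal N$ of the zero $(t_*,0)$ from Lemma~\ref{lem:F-stationary}, and its complement $\mathcal R\setminus\mathcal N$.

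\textbf{Away from $(t_*,0)$.} Here $F$ should be bounded below by a positive constant. I would cover $\mathcal R\setminus\mathcal N$ by rectangles and, on each rectangle $[t_1,t_2]\times[d_1,d_2]$, verify $F>0$ by a rigorous enclosure. Monotonicity makes this convenient. $G$ is increasing, so $G(2t)\ge G(2t_1)$, and $-\tfrac12 G(t\pm d)$ is bounded below using the upper endpoints of $t+d$ and $t-d$. For $\Gamma_5$ I would use interval evaluation, or monotonicity in $u$ if that is cheap to establish. Adaptive bisection, with its output recorded in Proposition~\ref{prop:two-construction-certificate} in the same way as the earlier constants, should finish this part.

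\textbf{Near $(t_*,0)$.} A naive enclosure cannot certify positivity here, because $F(t_*,0)=0$ and by Lemma~\ref{lem:F-stationary} the gradient also vanishes at that point. I would instead use a second-order argument. Since $F$ is even in $d$, write
\[
 F(t,d)=F(t,0)+\bigl(F(t,d)-F(t,0)\bigr).
\]
For the first term, $F(t,0)=e^{2t}\Delta_0(e^{-t})$. It therefore suffices to show $\Delta_0\ge0$ near $x_*$, which follows from $\Delta_0(x_*)=\Delta_0'(x_*)=0$ together with an interval bound $\Delta_0''>0$ on a neighbourhood of $x_*$. For the second term, I would write $F(t,d)-F(t,0)=\int_0^d F_d(t,s)\,\dd s$ with $F_d(t,0)=0$, so it suffices that $F_{dd}\ge0$ on $\mathcal N$. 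Equivalently, the full Hessian of $F$ should be positive semidefinite on $\mathcal N$, with $F_{tt}>0$ and $F_{dd}>0$ and the mixed term $F_{td}$ vanishing at $d=0$ by evenness. I would certify this by interval evaluation of the second derivatives of $G$ and $\Gamma_5$ over $\mathcal N$, checking $F_{tt}>0$, $F_{dd}>0$ and $F_{tt}F_{dd}>F_{td}^2$ there. Convexity of $F$ on $\mathcal N$, combined with a vanishing value and gradient at $(t_*,0)$, then gives $F\ge0$ on $\mathcal N$.

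\textbf{Main obstacle.} The hard step is the neighbourhood of $(t_*,0)$. I must choose $\mathcal N$ large enough that the remaining rectangles have $F$ uniformly bounded below by a margin the interval enclosures can resolve, yet small enough that the Hessian enclosures stay positive definite. The risk is that $F_{dd}(t_*,0)$ is small. This is plausible: the threshold is tuned near optimality, and the paper later reports $\widehat c$ extremely close to the achieved bound. If that happens, I would fall back on a finer mesh near $(t_*,0)$ and a Taylor model with rigorous third-derivative remainder bounds, rather than crude interval Hessians.
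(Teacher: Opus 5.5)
Your plan is correct and is essentially the paper's proof. The paper certifies by interval arithmetic that $F_{tt}$, $F_{dd}$ and $F_{tt}F_{dd}-F_{td}^2$ are positive on the rectangle $\mathcal Q_0$ around $(t_*,0)$ and concludes $F\ge0$ there from strict convexity and Lemma~\ref{lem:F-stationary}. It then covers the rest of the compact region by boxes with certified positive lower bounds, working in coordinates $(t,\xi)$ with $d=t\xi$ and using mean-value bounds instead of your monotone endpoint enclosures.

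Your side route via $F(t,0)=e^{2t}\Delta_0(e^{-t})\ge0$ plus $F_{dd}\ge0$ is not equivalent to Hessian definiteness; it is a weaker sufficient condition, but it is also valid. The smallness you feared does not occur: the paper obtains $F_{dd}\ge0.87$ and $F_{tt}\ge0.13$ on $\mathcal Q_0$.
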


\begin{proof}
To apply the computational results of Proposition~\ref{prop:two-construction-certificate}, we first express the second derivatives of $F$ in terms of derivatives of $G$ and $\Gamma_5$.

We begin with $G$.  Put $x=e^{-u}$.  Then $x'=-x$, and
\[
 H'(x)=\log\frac{1-x}{x},
 \qquad
 H''(x)=-\frac1{x(1-x)}.
\]
Since $G(u)=e^uH(x)$, the product and chain rules give
\[
 G'(u)=e^uH(x)+e^uH'(x)(-x)=G(u)-H'(x).
\]
Differentiating again,
\[
 G''(u)=G'(u)-H''(x)x'
 =G'(u)-\frac1{1-x}.
\]

For $\Gamma_5$, keep $x=e^{-u}$ and put $z=g(x)$.  The chain rule gives
\[
 z'=-xg'(x),
 \qquad
 z''=xg'(x)+x^2g''(x).
\]
Since $\Gamma_5(u)=e^u\sqrt{H(z)}$, another application of the product and chain rules gives
\begin{equation*}
 \Gamma_5'(u)
 =\Gamma_5(u)\left(1+\frac{H'(z)z'}{2H(z)}\right).
\end{equation*}
For the second derivative, differentiate the parenthesized factor as well.  Because
\[
 \frac{\dd}{\dd u}\frac{H'(z)z'}{H(z)}
 =\frac{H''(z)(z')^2+H'(z)z''}{H(z)}
 -\left(\frac{H'(z)z'}{H(z)}\right)^2,
\]
we obtain
\begin{align*}
 \Gamma_5''(u)
 ={}&\Gamma_5(u)\Biggl[
 \left(1+\frac{H'(z)z'}{2H(z)}\right)^2 \\[-1mm]
 &\qquad+\frac12\left(
 \frac{H''(z)(z')^2+H'(z)z''}{H(z)}
 -\left(\frac{H'(z)z'}{H(z)}\right)^2
 \right)\Biggr].
\end{align*}

Finally, calculate the second derivatives of $F$.  With $u=t+d$ and $v=t-d$, differentiating \eqref{eq:F-def} twice gives
\begin{align*}
 F_{tt}
 &=4m_*\alpha G''(2t)
 +m_*\beta\bigl(\Gamma_5''(u)\Gamma_5(v)+2\Gamma_5'(u)\Gamma_5'(v)+\Gamma_5(u)\Gamma_5''(v)\bigr)\\
 &\hspace{18mm}-\frac12\bigl(G''(u)+G''(v)\bigr),
 \\
 F_{dd}
 &=m_*\beta\bigl(\Gamma_5''(u)\Gamma_5(v)-2\Gamma_5'(u)\Gamma_5'(v)+\Gamma_5(u)\Gamma_5''(v)\bigr)\\
 &\hspace{18mm}-\frac12\bigl(G''(u)+G''(v)\bigr),
 \\
 F_{td}
 &=m_*\beta\bigl(\Gamma_5''(u)\Gamma_5(v)-\Gamma_5(u)\Gamma_5''(v)\bigr)
 -\frac12\bigl(G''(u)-G''(v)\bigr).
\end{align*}

Define the local rectangle
\begin{equation}
 \mathcal Q_0:=
 \bigl\{(t,d):|t-0.36992289186771087|\le0.06,\ 0\le d\le0.06\bigr\}.
 \label{eq:local-rectangle}
\end{equation}
Proposition~\ref{prop:two-construction-certificate} gives throughout $\mathcal Q_0$
\[
 F_{tt}\ge0.13459156779665421,
 \qquad
 F_{dd}\ge0.87181314699157286,
\]
and
\[
 F_{tt}F_{dd}-F_{td}^2\ge0.050114365218749776>0.
\]
Thus the Hessian is positive definite throughout $\mathcal Q_0$, and $F$ is strictly convex there.  The enclosure for $t_*$ in Proposition~\ref{prop:two-construction-certificate} lies inside the $t$-interval defining $\mathcal Q_0$, so $(t_*,0)\in\mathcal Q_0$.  By Lemma~\ref{lem:F-stationary}, this point is stationary and has value $0$.  Strict convexity therefore shows that $F\ge0$ throughout $\mathcal Q_0$.

It remains to cover precisely the part of the triangle not handled by Lemma~\ref{lem:F-small-t}, Lemma~\ref{lem:F-boundary-tail}, or the local rectangle.  To work with rectangular cells, set
\[
 \xi:=d/t\in[0,1],
 \qquad
 u=t(1+\xi),
 \qquad
 v=t(1-\xi),
\]
and define $\widetilde F(t,\xi):=F(t,t\xi)$.  The change $d=t\xi$ maps the triangular condition $0\le d\le t$ to the fixed interval $0\le\xi\le1$.

The first derivatives used in the finite estimates follow from the ordinary chain rule.  Since
\[
 \frac{\partial}{\partial t}F(t,t\xi)=F_t(t,t\xi)+\xi F_d(t,t\xi),
 \qquad
 \frac{\partial}{\partial\xi}F(t,t\xi)=tF_d(t,t\xi),
\]
substitution of \eqref{eq:F-def} gives
\begin{align*}
 \widetilde F_t
 &=2m_*\alpha G'(2t)
 +m_*\beta\bigl((1+\xi)\Gamma_5'(u)\Gamma_5(v)+(1-\xi)\Gamma_5(u)\Gamma_5'(v)\bigr)\\
 &\hspace{18mm}-\frac12\bigl((1+\xi)G'(u)+(1-\xi)G'(v)\bigr),
 \\
 \widetilde F_\xi
 &=m_*\beta t\bigl(\Gamma_5'(u)\Gamma_5(v)-\Gamma_5(u)\Gamma_5'(v)\bigr)
 -\frac t2\bigl(G'(u)-G'(v)\bigr).
\end{align*}

We call a \emph{box} a closed axis-parallel rectangle
\[
 R=[t_0-\Delta_t,t_0+\Delta_t]\times
 [\xi_0-\Delta_\xi,\xi_0+\Delta_\xi].
\]
For any $(t,\xi)\in R$, apply the one-variable fundamental theorem of calculus first along the $t$ direction and then along the $\xi$ direction.  Taking absolute values gives
\begin{equation}
 \widetilde F(t,\xi)
 \ge \widetilde F(t_0,\xi_0)
 -\sup_R|\widetilde F_t|\,\Delta_t
 -\sup_R|\widetilde F_\xi|\,\Delta_\xi.
 \label{eq:mean-value}
\end{equation}
Thus a lower bound for the center value and upper bounds for the two derivatives give a lower bound for $\widetilde F$ on the whole box.

After the analytic boundary regions are removed, the remaining points have $v=t(1-\xi)$ bounded below as follows:
\[
\begin{array}{c|c}
 t\text{-range} & v\text{ lower bound}\\ \hline
 $[0.002,0.1]$ & 5\cdot10^{-5}\\
 $[0.1,0.34]$ & 5\cdot10^{-5}\\
 $[0.34,0.40]$ & 5\cdot10^{-5}\\
 $[0.40,1.4]$ & 5\cdot10^{-5}\\
 $[1.4,4.1]$ & 0.1
\end{array}
\]
Proposition~\ref{prop:two-construction-certificate} gives, for each row of the table, a finite cover by such boxes. Boxes mapping into $\mathcal Q_0$ are covered by the strict-convexity argument above. On every remaining box the right-hand side of \eqref{eq:mean-value} has a certified positive lower bound; the smallest among the five regional minima in Proposition~\ref{prop:two-construction-certificate} is
\[
 1.2396204165999228\cdot10^{-6}>0.
\]
Hence $\widetilde F>0$ on every such box. Together with the three analytic regions of Lemmas~\ref{lem:F-small-t} and~\ref{lem:F-boundary-tail}, these covers exhaust the domain $t>0$, $0\le d\le t$. Thus $F(t,d)\ge0$ everywhere.
\end{proof}

\begin{proof}[Proof of Lemma~\ref{lem:S}]
By Proposition~\ref{prop:F-reduction} and Lemma~\ref{lem:F-global}, inequality~\eqref{eq:S} holds throughout the open square.  Both sides of \eqref{eq:S} are continuous on $[0,1]^2$, so the boundary inequality follows by approximation from points in $(0,1)^2$.
\end{proof}

\section{Proof of Theorem~\ref{thm:main}}

This section proves Theorem~\ref{thm:main} using the independent protocol and Liu's Example~5 at
\begin{equation}
 \ell_0:=\frac{1409}{2000},
 \qquad
 \tau_0:=\ell_0^2=\frac{1985281}{4000000}.
 \label{eq:tuned-ell}
\end{equation}
Set
\begin{equation}
 m_0:=\frac{3085573725166011}{5000000000000000}=0.6171147450332022
 \label{eq:tuned-m}
\end{equation}
and use the weights
\begin{equation}
 \alpha_0:=\frac1{2m_0}+\frac1{800000000},
 \qquad
 \beta_0:=1-\alpha_0.
 \label{eq:tuned-weights}
\end{equation}
Since $m_0>1/2$, one has $1/(2m_0)<1$.  Moreover the exact rational inequality
\[
 2m_0\left(1-\frac1{800000000}\right)>1
\]
gives $1/(2m_0)<1-1/800000000$, and hence $0<\alpha_0<1$ and $0<\beta_0<1$.

Lemma~\ref{lem:tuned-product} proves
\[
 H(r_{\ell_0}(x,y))\ge
 \sqrt{H(g_{\ell_0}(x))H(g_{\ell_0}(y))},
\]
which is the pointwise estimate used for the Example~5 term.  Lemma~\ref{lem:tuned-second} proves the companion inequality involving the independent term and the same one-variable factors.  Proposition~\ref{prop:tuned-fixed} then integrates these two inequalities.

\begin{lemma}
\label{lem:tuned-product}
For every $x,y\in[0,1]$,
\begin{equation}
 H(r_{\ell_0}(x,y))^2
 \ge H(g_{\ell_0}(x))H(g_{\ell_0}(y)).
 \label{eq:tuned-product}
\end{equation}
\end{lemma}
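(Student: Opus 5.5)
I will follow the proof of Lemma~\ref{lem:R} almost verbatim, with $r$ replaced by $r_{\ell_0}=xyW_\ell$, where
\[
W_\ell:=1+\tau_0(1-x)(1-y),\qquad \tau_0=\ell_0^2 .
\]
\textbf{Reduction.} Both sides of \eqref{eq:tuned-product} are continuous, and there is equality on the diagonal since $g_{\ell_0}(z)=r_{\ell_0}(z,z)$. So it suffices to take $0<x<y<1$ and set $\mathcal L=\log H\circ r_{\ell_0}$. Define $\Delta_x$ as in \eqref{eq:Deltax-def}; the goal is again $\mathcal L_2(x,y)>\mathcal L_2(y,y)$. First I record the derivatives:
\[
r_x=yU_\ell,\qquad r_y=xV_\ell,\qquad U_\ell=1+\tau_0(1-y)(1-2x),\qquad V_\ell=1+\tau_0(1-x)(1-2y).
\]
Both $U_\ell$ and $V_\ell$ are positive, because $\tau_0<1$. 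Hence $r_{\ell_0}$ is strictly increasing in each variable and $g_{\ell_0}$ is strictly increasing.

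\textbf{Case B ($g_{\ell_0}(y)\ge 1/2$).} This transfers unchanged. We have
\[
\mathcal L_2=A_x(y)\,M(r),\qquad A_x(y)=\frac1y-\frac{\tau_0(1-x)}{1+\tau_0(1-x)(1-y)} .
\]
The map $u\mapsto \tau_0u/(1+\tau_0u(1-y))$ is increasing, so $0<A_x(y)<A_y(y)$. Lemma~\ref{lem:M-decreasing} and the sign analysis of $M$ then give the claim exactly as before.

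\textbf{Case A ($g_{\ell_0}(y)<1/2$).} As in Section~4, I want $\mathcal L_{12}<0$ on $[x,y]\times\{y\}$. The generalized identity of Lemma~\ref{lem:r-identity} has the form
\[
\frac{r_{xy}}{r_xr_y}=\frac1r+\frac{D_\ell}{W_\ell U_\ell V_\ell},
\]
where
\[
D_\ell:=\frac{W_\ell\,r_{xy}-U_\ell V_\ell}{xy}.
\]
I expect $D_\ell$ to be a simple polynomial: for $\tau=1$ the numerator reduces to $xy$ there. I would verify this by direct expansion and check its sign. If $D_\ell\le 0$ the case is immediate from Lemma~\ref{lem:entropy-one-variable}. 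Otherwise I need the analogue of Lemma~\ref{lem:WUV}:
\[
W_\ell U_\ell V_\ell\ge D_\ell\,(1-r_{\ell_0})\qquad\text{on }[0,y_1]^2 .
\]
Here $y_1$ is a dyadic rational with $g_{\ell_0}(y_1)>1/2$, certified by exact arithmetic as was done with $43/64$. I would rescale to $[0,1]^2$ and check positivity of the exact Bernstein coefficients of this bidegree-$(3,3)$ polynomial with rational coefficients, subdividing if necessary. Combined with Lemma~\ref{lem:entropy-one-variable}, this gives $\mathcal L_{12}<0$.

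\textbf{Main obstacle.} The hard step is the polynomial inequality in Case~A. Since $\tau_0\approx 0.496$, the right-hand side involves $D_\ell$ rather than the clean $xy$, and the margin may be thinner. In particular, $y_1$ is now larger, because $g_{\ell_0}$ is smaller than $g$. If the degree-$(3,3)$ Bernstein coefficients are not all positive, I would first try degree elevation or a subdivision of $[0,1]^2$ into subsquares. Failing that, I would split off a neighbourhood of the corner and handle it with an exact sign argument, for instance by checking the polynomial's values and gradient on the boundary.
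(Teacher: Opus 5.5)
Your proposal is correct and is essentially the paper's proof: the same reduction to $\mathcal L_2(x,y)>\mathcal L_2(y,y)$, the same Case~B argument through Lemma~\ref{lem:M-decreasing}, and the same Case~A route through a mixed-derivative identity plus an exact Bernstein certificate. For the record, your $D_\ell$ is exactly the constant $\tau_0$, since $W_\ell r_{xy}-U_\ell V_\ell=\tau_0xy$ for every $\tau$. The paper takes $y_1=45/64$, and there all sixteen degree-$(3,3)$ Bernstein coefficients of $W_\ell U_\ell V_\ell-\tau_0(1-r_{\ell_0})$ are positive (minimum about $0.5547$), so no subdivision is needed and the margin is comfortable rather than thin.
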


\begin{proof}
Write
\[
 W:=1+\tau_0(1-x)(1-y),
 \qquad r:=r_{\ell_0}(x,y)=xyW.
\]
Direct differentiation gives
\[
 r_x=y\bigl[1+\tau_0(1-2x)(1-y)\bigr],
 \qquad
 r_y=x\bigl[1+\tau_0(1-x)(1-2y)\bigr],
\]
and
\[
 r_{xy}=1+\tau_0(1-2x)(1-2y).
\]
The identity
\begin{align*}
 &W\bigl[1+\tau_0(1-2x)(1-2y)\bigr]\\
 &\quad-\bigl[1+\tau_0(1-2x)(1-y)\bigr]
       \bigl[1+\tau_0(1-x)(1-2y)\bigr]
 =\tau_0xy
\end{align*}
therefore yields, for $0<x,y<1$,
\begin{equation}
 W r_{xy}=\frac{r_xr_y}{xy}+\tau_0xy.
 \label{eq:tuned-r-identity}
\end{equation}
Because $\tau_0<1$,
\[
 W>0,
 \qquad
 1+\tau_0(1-2x)(1-y)\ge1-\tau_0>0,
\]
and the analogous inequality holds for the factor in $r_y$.  Thus $r,r_x,r_y>0$ in the open square, and division in \eqref{eq:tuned-r-identity} gives
\begin{equation}
 \frac{r_{xy}}{r_xr_y}
 =\frac1r+\frac{\tau_0xy}{Wr_xr_y}.
 \label{eq:tuned-r-ratio}
\end{equation}

The function $x\mapsto g_{\ell_0}(x)=r_{\ell_0}(x,x)$ is strictly increasing on $(0,1)$, because
\[
 g_{\ell_0}'(x)
 =2x\left[1+\tau_0(1-x)(1-2x)\right]
 \ge2x\left(1-\frac{\tau_0}{8}\right)>0,
\]
where $(1-x)(1-2x)\ge-1/8$ on $[0,1]$.

By symmetry assume $0<x<y<1$, and put
\[
 L(x,y):=\log H(r_{\ell_0}(x,y)),
 \qquad
 \Delta_x(y):=L(x,y)-\frac12L(x,x)-\frac12L(y,y).
\]
Since $\Delta_x(x)=0$, it is enough to prove $\partial_y\Delta_x(y)>0$.  Differentiating the last term along the diagonal gives
\[
 \frac{\dd}{\dd y}L(y,y)=L_1(y,y)+L_2(y,y)=2L_2(y,y),
\]
where the last equality follows from the symmetry of $L$.  Hence
\begin{equation}
 \partial_y\Delta_x(y)=L_2(x,y)-L_2(y,y).
 \label{eq:tuned-Delta-derivative}
\end{equation}
We prove that the right-hand side is positive.

Assume first that $g_{\ell_0}(y)<1/2$.  Since $g_{\ell_0}$ is increasing and
\[
 g_{\ell_0}(45/64)-\frac12
 =\frac{42978321721}{2684354560000}>0,
\]
we have $x,y<45/64$.  Proposition~\ref{prop:tuned-bernstein} gives, on this square,
\[
 Wr_xr_y\ge\tau_0xy(1-r).
\]
Substituting this inequality in \eqref{eq:tuned-r-ratio} gives
\begin{equation}
 \frac{r_{xy}}{r_xr_y}
 \le\frac1r+\frac1{1-r}.
 \label{eq:tuned-r-ratio-bound}
\end{equation}
For every $s\in[x,y]$,
\[
 r_{\ell_0}(s,y)\le r_{\ell_0}(y,y)=g_{\ell_0}(y)<\frac12.
\]
With $\Lambda(u)=\log H(u)$ as in Section~4, the chain rule gives
\[
 L_{12}
 =\Lambda'(r)r_xr_y
 \left(
 \frac{r_{xy}}{r_xr_y}+\frac{\Lambda''(r)}{\Lambda'(r)}
 \right).
\]
Here $\Lambda'(r)r_xr_y>0$.  Equation~\eqref{eq:tuned-r-ratio-bound} and Lemma~\ref{lem:entropy-one-variable} show that the expression in parentheses is strictly negative.  Therefore
\[
 L_{12}(s,y)<0\qquad(x\le s\le y).
\]
Integrating with respect to the first variable,
\[
 L_2(y,y)-L_2(x,y)=\int_x^yL_{12}(s,y)\,\dd s<0.
\]
Thus $L_2(x,y)>L_2(y,y)$, which proves \eqref{eq:tuned-Delta-derivative} in this case.

Assume now that $g_{\ell_0}(y)\ge1/2$.  Again the goal is to show $L_2(x,y)>L_2(y,y)$.  Since
\[
 \frac{r_y}{r}
 =\frac1y-
 \frac{\tau_0(1-x)}{1+\tau_0(1-x)(1-y)},
\]
write this positive coefficient as
\[
 A_x(y):=\frac{r_y(x,y)}{r(x,y)}.
\]
The function
\[
 a\longmapsto\frac{\tau_0a}{1+\tau_0a(1-y)}
\]
is strictly increasing for $a\ge0$.  Since $1-x>1-y$, subtracting its values from $1/y$ gives
\[
 0<A_x(y)<A_y(y).
\]
Also $r_x>0$, so
\[
 r(x,y)<r(y,y)=g_{\ell_0}(y).
\]
Finally, by the definition of $M$ in Section~4,
\[
 L_2(x,y)=A_x(y)M(r(x,y)),
 \qquad
 L_2(y,y)=A_y(y)M(g_{\ell_0}(y)).
\]
There are two possibilities.  If $r(x,y)<1/2\le g_{\ell_0}(y)$, then
\[
 M(r(x,y))>0\ge M(g_{\ell_0}(y)),
\]
so $L_2(x,y)>L_2(y,y)$.  If instead
\[
 \frac12\le r(x,y)<g_{\ell_0}(y),
\]
Lemma~\ref{lem:M-decreasing} gives
\[
 M(g_{\ell_0}(y))<M(r(x,y))\le0.
\]
Multiplying the first strict inequality by the positive number $A_x(y)$ and then using $A_x(y)<A_y(y)$ together with $M(g_{\ell_0}(y))<0$ gives
\[
 A_x(y)M(r(x,y))
 >A_x(y)M(g_{\ell_0}(y))
 >A_y(y)M(g_{\ell_0}(y)).
\]
Thus $L_2(x,y)>L_2(y,y)$ in this subcase as well.

Equation~\eqref{eq:tuned-Delta-derivative} is therefore positive for every $0<x<y<1$, and hence $\Delta_x(y)>0$.  By symmetry,
\[
 2\log H(r_{\ell_0}(x,y))
 \ge\log H(g_{\ell_0}(x))+\log H(g_{\ell_0}(y))
\]
throughout the open square, with equality when $x=y$.  Exponentiating and using continuity gives \eqref{eq:tuned-product} on $[0,1]^2$.
\end{proof}

The next lemma is proved on three analytic parts of the domain; Appendix~A handles the compact set that remains.

\begin{lemma}
\label{lem:tuned-second}
For every $x,y\in[0,1]$,
\begin{equation}
 2\alpha_0H(xy)
 +2\beta_0\sqrt{H(g_{\ell_0}(x))H(g_{\ell_0}(y))}
 \ge\frac{xH(y)+yH(x)}{m_0}.
 \label{eq:tuned-second}
\end{equation}
\end{lemma}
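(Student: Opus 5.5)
We follow the template of Section~5.  By symmetry it suffices to take $0<x\le y<1$; the boundary of $[0,1]^2$ then follows by continuity.  We use the logarithmic coordinates \eqref{eq:td-coordinates} and define
\[
 \Gamma_{\ell_0}(u):=\frac{\sqrt{H(g_{\ell_0}(e^{-u}))}}{e^{-u}},
\]
\[
 F_0(t,d):=m_0\bigl[\alpha_0G(2t)+\beta_0\Gamma_{\ell_0}(t+d)\Gamma_{\ell_0}(t-d)\bigr]-\frac{G(t+d)+G(t-d)}2 .
\]
Exactly as in Proposition~\ref{prop:F-reduction}, after multiplying by $m_0/(2xy)$, inequality \eqref{eq:tuned-second} becomes $F_0(t,d)\ge0$ on $t>0$, $0\le d\le t$.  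The key structural fact is the choice \eqref{eq:tuned-weights}:
\[
 c_1:=m_0\alpha_0=\tfrac12+\tfrac{m_0}{800000000}>\tfrac12 ,
\]
which exceeds $1/2$ by only about $7.7\cdot10^{-10}$.

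\emph{Analytic regions.}  We proceed as in Lemmas~\ref{lem:F-small-t} and~\ref{lem:F-boundary-tail}.  For small $t$, we drop the $\beta_0$ term and use concavity of $G$ (Lemma~\ref{lem:G-properties}) to get $F_0\ge c_1G(2t)-G(t)$.  The same estimates \eqref{eq:G-small-upper}, \eqref{eq:G-small-lower} then give $F_0\ge t\zeta_0(t)$, where $\zeta_0$ is defined with $c_0$ replaced by $c_1$.  Since $2c_1>1$ is now barely true, $\zeta_0$ is still decreasing on $0<t\le t_1$ (we need $e^{t_1}<2c_1$), but we must make sure $t_1$ is chosen so that $\zeta_0(t_1)>0$; for this the threshold $0.002$ should still work, since the gain comes from $2c_1(1-\log 2t)-1+\log t\approx -\log 2$ plus small terms.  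This must be rechecked numerically.

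The boundary and tail estimate is the delicate analytic part.  The old argument used $(c_0-\tfrac12)G(2t)$, which is now tiny, so the large-$t$ region cannot be handled by discarding the $\beta_0$ term.  Instead we keep it.  For large $u$, $\Gamma_{\ell_0}(u)\approx\sqrt{2}\,e^{u}\cdot(\dots)$, because $g_{\ell_0}(x)\sim x^2(1+\tau_0)$.  Hence $\Gamma_{\ell_0}(u)\Gamma_{\ell_0}(v)$ grows like $e^{u+v}\cdot$(polynomial in $t$), while $G$ is only linear in its argument.  We therefore bound $\Gamma_{\ell_0}$ below explicitly, for example using $H(z)\ge z\log(1/z)$, to obtain
\[
 \Gamma_{\ell_0}(u)\ge\sqrt{(1+\tau_0)(2u-\log(1+\tau_0))}\,\cdot(\text{explicit factor}),
\]
and we combine this with $G(u)+G(v)\le 2t+2$.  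This shows positivity for $t\ge T$ and, when $v$ is small, shows that the product term still dominates.  If the product term degenerates as $v\to0$, we instead exploit $G(v)\to0$ together with $c_1G(2t)-\tfrac12G(u)\ge\tfrac12(G(2t)-G(u))$, which is positive once $u<2t$.

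\emph{Compact remainder.}  On the remaining compact region we use certified interval arithmetic exactly as in the proof of Lemma~\ref{lem:F-global}.  We change variables to $\xi=d/t$, cover the region by boxes, and apply the mean-value bound \eqref{eq:mean-value}, using the enclosures for $\widetilde F_t$ and $\widetilde F_\xi$ in terms of $G',\Gamma_{\ell_0}'$ supplied by Appendix~A.

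\textbf{Main obstacle.}  Unlike in Section~5, there is no exact tangency point, since $m_0$ lies strictly below the optimum and $\alpha_0$ is slightly perturbed from $1/(2m_0)$.  The minimum of $F_0$ is therefore positive but extremely small, of order $10^{-9}$, near the diagonal point analogous to $(t_*,0)$.  A pure box cover would need an enormous number of boxes there.  We would first try to isolate a small rectangle around the approximate minimizer and certify positive-definite Hessian bounds on it, as for $\mathcal Q_0$.  We then combine this with a certified lower bound for $F_0$ at the rectangle's center and an upper bound for $|\nabla F_0|$ there.  Strong convexity turns these into $F_0\ge F_0(p_0)-|\nabla F_0(p_0)|^2/(2\lambda_{\min})>0$, which requires high-precision (MPFR) evaluation at the center.
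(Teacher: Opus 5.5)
There is a genuine gap: your analytic regions discard the $\beta_0$-term, and without it the inequality is false.

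\textbf{Small $t$.} Put $c_1=m_0\alpha_0=\frac12+\frac{m_0}{800000000}$. Since $G(w)\approx w(1-\log w)$ for small $w$, discarding the $\beta_0$-term leaves
$c_1G(2t)-G(t)\approx -t\log2+(2c_1-1)\,t\log\frac1t$.
This is negative unless $\log\frac1t\gtrsim\log2/(2c_1-1)\approx4.5\cdot10^{8}$. Your own heuristic ``$\approx-\log2$ plus small terms'' already has the wrong sign.

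Concretely, at $t=0.002$ the monotonicity hypothesis $e^{t}<2c_1$ fails, since $e^{0.002}\approx1.002$ while $2c_1\approx1+1.5\cdot10^{-9}$. Moreover $\zeta_0(0.002)\approx-0.71$.

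This is not a matter of tuning constants. The concavity step is an equality at $d=0$, so $F_0(t,0)=c_1G(2t)-G(t)+m_0\beta_0W_0(t)$. It is positive for small $t$ only because the product term is of order $t\log\frac1t$, which beats the deficit $t\log2$.

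\textbf{Edge strip.} Your fallback does not help there either. Put $D:=G(u)+G(v)-G(2t)$. Subadditivity of the concave $G$ with $G(0)=0$ gives $D\ge0$, so
$\frac12\bigl(G(2t)-G(u)\bigr)-\frac12G(v)=-\frac12D\le0$.
Apart from the negligible $(c_1-\frac12)G(2t)$, the $\beta_0$-term is the only positive contribution anywhere. It must therefore be kept in every analytic region.

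\textbf{Tail heuristic.} $\Gamma_{\ell_0}(u)=\sqrt{W_0(u)}$ grows only like $\sqrt{2(1+\tau_0)u}$, not exponentially. There is no $e^{u+v}$ domination in the tail.

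\textbf{What is missing.} Write $x=e^{-s}$, $y=e^{-\sigma}$, $0<s\le\sigma$, $u=s+\sigma$, $\lambda=s/u$. The key step is to rewrite the inequality as
$F_0=\frac{G(s+\sigma)}{800000000}-\frac{D}{2m_0}+\beta_0\sqrt{W_0(s)W_0(\sigma)}$,
with $D$ the subadditivity defect and $W_0(v)=e^{2v}H(g_{\ell_0}(e^{-v}))$. One then bounds $D$ from above and $W_0$ from below in three analytic cases.
\begin{enumerate}
\item[(1)] If $G(\sigma)\ge\frac{178}{25}G(s)$: use $D<G(s)$ and $W_0\ge\frac85G$.
\item[(2)] If $s,\sigma\ge2$: use $D<1$ and $W_0\ge W_0(2)$.
\item[(3)] If $s,\sigma\le2^{-10}$: use $D<uH(\lambda)\le2u\sqrt{\lambda(1-\lambda)}$, which comes from $-G''(v)<1/v$, and $W_0(w)\ge2w\log\frac1{2u}$. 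Together these give the positive factor $\beta_0\log\frac1{2u}-\frac1{2m_0}$.
\end{enumerate}

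Case (1) also removes the edge $\xi\ge15/17$ before the box cover. Without it, a mean-value cover in $(t,\xi)$ cannot reach $\xi=1$: there $\Gamma_{\ell_0}(v)\sim\sqrt{2v\log(1/v)}$ has unbounded derivative. Similarly, without case (3) your cover would have to reach $t\approx e^{-4.5\cdot10^{8}}$.

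Your treatment of the near-contact region is fine and matches the paper's approach: a certified positive-definite Hessian, a lower bound for the centre value, and a gradient bound.
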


\begin{proof}
Both sides of \eqref{eq:tuned-second} are symmetric in $x,y$ and continuous on $[0,1]^2$.  It is therefore enough to prove the strict inequality for $0<y\le x<1$; the case $y>x$ follows by interchanging the variables, and the boundary follows by continuity.  Write
\[
 x=e^{-s},\qquad y=e^{-\sigma}.
\]
Since $u\mapsto e^{-u}$ is decreasing, $0<s\le\sigma$.

Using $G(u)=e^uH(e^{-u})$ from Section~4, define only the additional one-variable function
\begin{equation}
 W_0(u):=e^{2u}H(g_{\ell_0}(e^{-u}))
 \qquad(u>0).
 \label{eq:W0-def}
\end{equation}
The three terms in \eqref{eq:tuned-second}, after division by $2xy$, become
\[
 \frac{H(xy)}{xy}=G(s+\sigma),
\]
\[
 \frac{\sqrt{H(g_{\ell_0}(x))H(g_{\ell_0}(y))}}{xy}
 =\sqrt{W_0(s)W_0(\sigma)},
\]
and
\[
 \frac{xH(y)+yH(x)}{2m_0xy}
 =\frac{G(s)+G(\sigma)}{2m_0}.
\]
Thus \eqref{eq:tuned-second} is equivalent to
\begin{equation}
 F_0(s,\sigma):=
 \alpha_0G(s+\sigma)
 -\frac{G(s)+G(\sigma)}{2m_0}
 +\beta_0\sqrt{W_0(s)W_0(\sigma)}>0.
 \label{eq:tuned-F}
\end{equation}

We shall use that both $G$ and $W_0$ are increasing.  The series from Section~5 gives
\begin{align}
 G(u)&=u+1-\sum_{k\ge1}\frac{e^{-ku}}{k(k+1)},\notag\\
 G'(u)&=1+\sum_{k\ge1}\frac{e^{-ku}}{k+1}>0,\notag\\
 G''(u)&=-\sum_{k\ge1}\frac{k e^{-ku}}{k+1}<0.
 \label{eq:G-series-tuned}
\end{align}
Hence $G$ is strictly increasing and strictly concave.

To prove that $W_0$ is increasing, fix $v>0$, put $z=e^{-v}$, and set
\[
 q:=g_{\ell_0}(z)=z^2\bigl(1+\tau_0(1-z)^2\bigr).
\]
Differentiating \eqref{eq:W0-def} and using $\dd z/\dd v=-z$ gives
\[
 W_0'(v)=\frac{2H(q)-zq_zH'(q)}{z^2},
\]
where
\[
 zq_z=2q-2\tau_0z^3(1-z).
\]
Since $H(q)-qH'(q)=-\log(1-q)$,
\[
 2H(q)-zq_zH'(q)
 =-2\log(1-q)+2\tau_0z^3(1-z)H'(q).
\]
If $q\le1/2$, then $H'(q)\ge0$, and the right-hand side is positive.  If $q>1/2$, then
\[
 -H'(q)=\log\frac q{1-q}\le-\log(1-q)
\]
and
\[
 \tau_0z^3(1-z)\le\frac{27\tau_0}{256}<1.
\]
Because $\log(1-q)<0$,
\[
 2\tau_0z^3(1-z)H'(q)
 \ge2\tau_0z^3(1-z)\log(1-q)
 >2\log(1-q),
\]
so the right-hand side is again positive.  Therefore $W_0'(v)>0$ for every $v>0$.

For the remaining estimates put
\[
 D:=G(s)+G(\sigma)-G(s+\sigma).
\]
Using \eqref{eq:tuned-weights}, equation~\eqref{eq:tuned-F} becomes
\begin{equation}
 F_0(s,\sigma)
 =\frac{G(s+\sigma)}{800000000}
 -\frac{D}{2m_0}
 +\beta_0\sqrt{W_0(s)W_0(\sigma)}.
 \label{eq:tuned-F-D}
\end{equation}
Concavity gives $D\ge0$: with $\lambda=s/(s+\sigma)$ and $G(0)=0$,
\[
 G(s)\ge\lambda G(s+\sigma),
 \qquad
 G(\sigma)\ge(1-\lambda)G(s+\sigma).
\]
The series \eqref{eq:G-series-tuned} gives
\[
 D=1-\sum_{k\ge1}
 \frac{e^{-ks}+e^{-k\sigma}-e^{-k(s+\sigma)}}{k(k+1)}.
\]
For every $k\ge1$ the numerator in the summand is at least both $e^{-ks}$ and $e^{-k\sigma}$.  Comparing term by term with the series for $G(s)-s$ and $G(\sigma)-\sigma$ yields
\begin{equation}
 0\le D\le\min\{G(s)-s,G(\sigma)-\sigma\}<1.
 \label{eq:tuned-D-basic}
\end{equation}

Proposition~\ref{prop:tuned-one-dimensional} gives, for every $v>0$,
\begin{equation}
 W_0(v)\ge\frac85G(v),
 \qquad
 G(16v)\ge\frac{178}{25}G(v),
 \label{eq:tuned-global-1d}
\end{equation}
and the three numerical inequalities used below.

\medskip
\noindent\textbf{Case 1.} Suppose
\[
 G(\sigma)\ge\frac{178}{25}G(s).
\]
From \eqref{eq:tuned-D-basic}, $D<G(s)$, while \eqref{eq:tuned-global-1d} gives
\[
 \sqrt{W_0(s)W_0(\sigma)}
 \ge\frac85\sqrt{G(s)G(\sigma)}.
\]
Substitution in \eqref{eq:tuned-F-D} gives
\begin{align*}
 F_0(s,\sigma)
 &\ge\frac{G(s+\sigma)}{800000000}
 -\frac{G(s)}{2m_0}
 +\frac{8\beta_0}{5}\sqrt{G(s)G(\sigma)}\\
 &\ge\frac{G(s+\sigma)}{800000000}
 +G(s)\left(
 \frac{8\beta_0}{5}\sqrt{\frac{178}{25}}-\frac1{2m_0}
 \right)>0,
\end{align*}
because the last parenthesis is positive by Proposition~\ref{prop:tuned-one-dimensional}.

\medskip
\noindent\textbf{Case 2.} Suppose $s,\sigma\ge2$.  Then $D<1$ and, since $W_0$ is increasing,
\[
 \sqrt{W_0(s)W_0(\sigma)}\ge W_0(2).
\]
Equation~\eqref{eq:tuned-F-D} therefore gives
\[
 F_0(s,\sigma)
 >\frac{G(s+\sigma)}{800000000}
 -\frac1{2m_0}+\beta_0W_0(2)>0,
\]
where the last inequality again comes from Proposition~\ref{prop:tuned-one-dimensional}.

\medskip
\noindent\textbf{Case 3.} Suppose $s,\sigma\le2^{-10}$.  Put $u:=s+\sigma$.  With the same
\[
 \lambda=\frac{s}{s+\sigma}
\]
used above, we have $s=\lambda u$, $\sigma=(1-\lambda)u$, and $u\le2^{-9}$.  The function
\[
 (a,b)\longmapsto G(a)+G(b)-G(a+b)
\]
vanishes when $a=0$ or $b=0$, because $G(0)=0$, and its mixed derivative is $-G''(a+b)$.  Applying the fundamental theorem of calculus first in $b$ and then in $a$, the boundary terms on the two coordinate axes vanish and we obtain
\[
 D=-\int_0^s\int_0^\sigma G''(a+b)\,\dd b\,\dd a.
\]
By \eqref{eq:G-series-tuned},
\[
 -G''(v)
 <\sum_{k\ge1}e^{-kv}
 =\frac1{e^v-1}<\frac1v.
\]
Therefore
\[
 D<\int_0^s\int_0^\sigma\frac{\dd b\,\dd a}{a+b}.
\]
The integrand is nonnegative, so Tonelli's theorem allows us to integrate first in $b$.  For every $a>0$,
\[
 \int_0^\sigma\frac{\dd b}{a+b}
 =\log(a+\sigma)-\log a.
\]
The function on the right is integrable on $(0,s]$: the first logarithm is bounded there, and $\int_0^s|\log a|\,\dd a<\infty$.  Hence
\[
 \int_0^s\int_0^\sigma\frac{\dd b\,\dd a}{a+b}
 =\int_0^s\bigl(\log(a+\sigma)-\log a\bigr)\,\dd a.
\]
For $0<\varepsilon<s$, direct integration gives
\begin{align*}
 &\int_\varepsilon^s\bigl(\log(a+\sigma)-\log a\bigr)\,\dd a\\
 &\quad=\Bigl[(a+\sigma)\log(a+\sigma)-(a+\sigma)-a\log a+a\Bigr]_{a=\varepsilon}^{a=s}\\
 &\quad=(s+\sigma)\log(s+\sigma)-(\varepsilon+\sigma)\log(\varepsilon+\sigma)
       -s\log s+\varepsilon\log\varepsilon.
\end{align*}
Moreover,
\[
 \lim_{\varepsilon\to0^+}\varepsilon\log\varepsilon=0,
 \qquad
 \lim_{\varepsilon\to0^+}(\varepsilon+\sigma)\log(\varepsilon+\sigma)
 =\sigma\log\sigma.
\]
Taking the limit $\varepsilon\to0^+$ therefore gives
\begin{align*}
 \int_0^s\int_0^\sigma\frac{\dd b\,\dd a}{a+b}
 &=(s+\sigma)\log(s+\sigma)-s\log s-\sigma\log\sigma\\
 &=uH(\lambda).
\end{align*}
Thus
\begin{equation}
 D<uH(\lambda).
 \label{eq:tuned-D-entropy}
\end{equation}

For $w\in\{s,\sigma\}$ set
\[
 r_w:=1-g_{\ell_0}(e^{-w}).
\]
Since $\tau_0<1$, $1-e^{-w}\le w$, and $e^{2w}\ge1+2w+2w^2$,
\begin{align*}
 e^{2w}r_w
 &=e^{2w}-1-\tau_0(1-e^{-w})^2\\
 &\ge 2w+2w^2-w^2\ge2w.
\end{align*}
Also $g_{\ell_0}(e^{-w})\ge e^{-2w}$, so
\[
 r_w\le1-e^{-2w}\le2w\le2u.
\]
Using $H(1-r_w)=H(r_w)$ in \eqref{eq:W0-def},
\[
 W_0(w)=e^{2w}H(r_w)
 \ge e^{2w}r_w\log\frac1{r_w}
 \ge2w\log\frac1{2u}.
\]
Consequently
\begin{equation}
 \beta_0\sqrt{W_0(s)W_0(\sigma)}
 \ge2\beta_0u\sqrt{\lambda(1-\lambda)}\log\frac1{2u}.
 \label{eq:tuned-W-small}
\end{equation}
We shall also use
\begin{equation}
 H(\lambda)\le2\sqrt{\lambda(1-\lambda)}
 \qquad(0\le\lambda\le1).
 \label{eq:H-sqrt}
\end{equation}
For $0<\lambda<1$, the inequality $\log t\le t-1$ gives
\[
 -\lambda\log\lambda
 =2\lambda\log\frac1{\sqrt\lambda}
 \le2(\sqrt\lambda-\lambda),
\]
and similarly
\[
 -(1-\lambda)\log(1-\lambda)
 \le2(\sqrt{1-\lambda}-(1-\lambda)).
\]
Adding and using
\[
 \sqrt\lambda+\sqrt{1-\lambda}-1\le\sqrt{\lambda(1-\lambda)}
\]
proves \eqref{eq:H-sqrt}; the endpoints follow by continuity.

Since $u\le2^{-9}$,
\[
 \log\frac1{2u}\ge8\log2,
\]
and Proposition~\ref{prop:tuned-one-dimensional} gives
\[
 8\beta_0\log2>\frac1{2m_0}.
\]
Combining this inequality with \eqref{eq:tuned-D-entropy}, \eqref{eq:tuned-W-small}, and \eqref{eq:H-sqrt} in \eqref{eq:tuned-F-D} yields
\begin{align*}
 F_0(s,\sigma)
 &>\frac{G(u)}{800000000}
 -\frac{uH(\lambda)}{2m_0}
 +2\beta_0u\sqrt{\lambda(1-\lambda)}\log\frac1{2u}\\
 &\ge\frac{G(u)}{800000000}
 +2u\sqrt{\lambda(1-\lambda)}
 \left(
 \beta_0\log\frac1{2u}-\frac1{2m_0}
 \right)>0.
\end{align*}

It remains to consider points satisfying none of Cases~1--3.  Since Case~1 does not apply,
\[
 G(\sigma)<\frac{178}{25}G(s).
\]
If $\sigma\ge16s$, monotonicity of $G$ and \eqref{eq:tuned-global-1d} would give
\[
 G(\sigma)\ge G(16s)\ge\frac{178}{25}G(s),
\]
a contradiction.  Hence $\sigma<16s$.  Because Case~2 does not apply and $s\le\sigma$, we cannot have $s\ge2$; otherwise $\sigma\ge2$ as well.  Thus $s<2$.  Because Case~3 does not apply, $s$ and $\sigma$ are not both at most $2^{-10}$; since $s\le\sigma$, this implies $\sigma>2^{-10}$.  Therefore
\[
 2^{-14}<s<2,
 \qquad
 2^{-10}<\sigma<32.
\]
Set
\[
 t:=\frac{s+\sigma}{2},
 \qquad
 \xi:=\frac{\sigma-s}{\sigma+s},
 \qquad
 d:=t\xi.
\]
The preceding bounds give $t>\sigma/2>2^{-11}$ and $t<(2+32)/2=17$.  Moreover $\sigma/s<16$, so
\[
 0\le\xi=\frac{\sigma/s-1}{\sigma/s+1}<\frac{15}{17}.
\]
Thus
\[
 2^{-11}<t<17,
 \qquad
 0\le\xi<\frac{15}{17},
\]
and $s=t-d=t(1-\xi)$, $\sigma=t+d=t(1+\xi)$.

Proposition~\ref{prop:tuned-contact} proves
\[
 F_0(t-d,t+d)>0
\]
whenever
\[
 |t-0.369781418829901|\le\frac1{20},
 \qquad |d|\le\frac1{20}.
\]
Proposition~\ref{prop:tuned-compact} gives a finite cover of
\[
 [2^{-11},17]\times[0,15/17]
\]
such that every cell either maps into this rectangle under $d=t\xi$, or has a strictly positive outward-rounded mean-value lower bound for $F_0(t(1-\xi),t(1+\xi))$.  Hence $F_0>0$ at every remaining point.  This proves the strict inequality in the open square.  Continuity gives the non-strict inequality on the boundary.
\end{proof}

\begin{proposition}
\label{prop:tuned-fixed}
Let $P$ be a probability measure on $[0,1]$ and let $\rho\in\cC_3(P)$.  Then
\[
 \alpha_0\iint H(xy)\,P(\dd x)P(\dd y)
 +\beta_0\iint H(r_{\ell_0}(x,y))\,\rho(\dd x,\dd y)
 \ge\frac{m(P)}{m_0}\int H\,\dd P.
\]
\end{proposition}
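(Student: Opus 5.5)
The argument mirrors the proof of Proposition~\ref{prop:fixed-two-construction} exactly, with Lemma~\ref{lem:tuned-product} and Lemma~\ref{lem:tuned-second} in place of Lemmas~\ref{lem:R} and~\ref{lem:S}. Put
\[
 b_0(x):=\sqrt{H(g_{\ell_0}(x))},
\]
which is continuous on $[0,1]$ because $g_{\ell_0}$ is a polynomial mapping $[0,1]$ into $[0,1]$. We have $g_{\ell_0}(x)\ge0$, and $g_{\ell_0}(x)\le1$ since $g_{\ell_0}(1)=1$ and $g_{\ell_0}$ is increasing.

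\emph{Lower bound for the Example~5 term.} Since both sides of \eqref{eq:tuned-product} are nonnegative, taking square roots in Lemma~\ref{lem:tuned-product} gives the pointwise bound
\[
 H(r_{\ell_0}(x,y))\ge b_0(x)b_0(y).
\]
Integrating against $\rho$ and applying Lemma~\ref{lem:positive-correlation} with $\psi=b_0$ yields
\[
 \iint H(r_{\ell_0}(x,y))\,\rho(\dd x,\dd y)\ge\Bigl(\int b_0\,\dd P\Bigr)^2 .
\]

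\emph{Integrating the companion inequality.} Integrate \eqref{eq:tuned-second} against the product measure $P(\dd x)P(\dd y)$; all integrands are bounded and continuous, so this is legitimate. The square-root term factorizes:
\[
 \iint b_0(x)b_0(y)\,\dd P\,\dd P=\Bigl(\int b_0\,\dd P\Bigr)^2 .
\]
By Fubini and symmetry, the right-hand side becomes
\[
 \frac{2m(P)}{m_0}\int H\,\dd P .
\]
Dividing by $2$ gives
\[
 \alpha_0\iint H(xy)\,\dd P\,\dd P+\beta_0\Bigl(\int b_0\,\dd P\Bigr)^2\ge\frac{m(P)}{m_0}\int H\,\dd P .
\]

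\emph{Combining.} We have $\beta_0>0$, as shown after \eqref{eq:tuned-weights}. So replacing the squared integral by the larger quantity $\iint H(r_{\ell_0})\,\dd\rho$ from the first step preserves the inequality, which is the claim.

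There is no real obstacle here, since all the analytic work is in the two lemmas. The only points to check are that $\beta_0\ge0$, so the substitution goes the right way, and that $b_0$ is continuous, so Lemma~\ref{lem:positive-correlation} applies.
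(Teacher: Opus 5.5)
Your proposal is correct and follows the paper's proof essentially step for step. You take square roots in Lemma~\ref{lem:tuned-product}, apply Lemma~\ref{lem:positive-correlation} to $\sqrt{H(g_{\ell_0}(x))}$, integrate Lemma~\ref{lem:tuned-second} against $P\otimes P$ using Fubini and the factorization, divide by $2$, and then substitute using $\beta_0>0$.
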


\begin{proof}
Lemma~\ref{lem:tuned-product} and nonnegativity give
\[
 H(r_{\ell_0}(x,y))
 \ge\sqrt{H(g_{\ell_0}(x))}\sqrt{H(g_{\ell_0}(y))}.
\]
Integrating with respect to $\rho$ and applying Lemma~\ref{lem:positive-correlation} to the continuous function $x\mapsto\sqrt{H(g_{\ell_0}(x))}$ gives
\begin{equation}
 \iint H(r_{\ell_0}(x,y))\,\rho(\dd x,\dd y)
 \ge
 \left(\int\sqrt{H(g_{\ell_0}(x))}\,P(\dd x)\right)^2.
 \label{eq:tuned-integrated-product}
\end{equation}

Integrate Lemma~\ref{lem:tuned-second} with respect to $P(\dd x)P(\dd y)$ and divide by $2$.  Fubini gives
\[
 \iint xH(y)\,P(\dd x)P(\dd y)
 =m(P)\int H\,\dd P
 =\iint yH(x)\,P(\dd x)P(\dd y),
\]
while the product term factors as
\[
 \iint\sqrt{H(g_{\ell_0}(x))H(g_{\ell_0}(y))}\,
 P(\dd x)P(\dd y)
 =\left(\int\sqrt{H(g_{\ell_0}(x))}\,P(\dd x)\right)^2.
\]
Therefore
\begin{equation}
 \alpha_0\iint H(xy)\,P(\dd x)P(\dd y)
 +\beta_0\left(\int\sqrt{H(g_{\ell_0}(x))}\,P(\dd x)\right)^2
 \ge\frac{m(P)}{m_0}\int H\,\dd P.
 \label{eq:tuned-integrated-second}
\end{equation}
Replacing the square in \eqref{eq:tuned-integrated-second} by the larger integral from \eqref{eq:tuned-integrated-product} proves the proposition.
\end{proof}

\begin{proof}[Proof of Theorem~\ref{thm:main}]
Fix $0<c<1-m_0$ and let $\mu$ be a probability measure with $\int s\,\mu(\dd s)\le c$.  Let $S$ be a $[0,1]$-valued random variable with distribution $\mu$, and let $P$ be the distribution of $1-S$.  Then
\[
 m(P)=\int(1-s)\,\mu(\dd s)
 =1-\int s\,\mu(\dd s)
 \ge1-c>m_0.
\]
We apply Liu's criterion to the independent protocol and Example~5 with parameter $\ell_0$, using the weights $\alpha_0,\beta_0$.  For the independent protocol, if $S_1,S_2$ are independent with law $\mu$, then $1-S_1,1-S_2$ are independent with law $P$.  Thus the coupling $\mu\otimes\mu$ becomes $P\otimes P$, and its entropy integrand becomes $H(xy)$.  For Example~5, let $\rho\in\cC_3(\mu)$ and let $(X,Y)$ have joint law $\rho$.  The law of $(1-X,1-Y)$ has both marginals equal to $P$ and belongs to $\cC_3(P)$: in \eqref{eq:C3-def}, each product measure $Q^{\otimes2}$ is sent to the square of the one-dimensional law of $1-X$ when $X$ has distribution $Q$, and reflection preserves convex combinations and weak limits.  In these variables the entropy integrand is $H(r_{\ell_0}(x,y))$.

Proposition~\ref{prop:tuned-fixed} therefore bounds the weighted sum in Liu's criterion below by
\[
 \frac{m(P)}{m_0}\int H\,\dd P.
\]
Since $H(1-s)=H(s)$,
\[
 \int H\,\dd P=\int H\,\dd\mu,
\]
and
\[
 \frac{m(P)}{m_0}\ge\frac{1-c}{m_0}>1.
\]
Thus Liu's criterion applies with the {constant} factor $(1-c)/m_0$, and gives $c\le c_{\mathrm{UC}}$.  This holds for every $0<c<1-m_0$, so
\[
 c_{\mathrm{UC}}\ge1-m_0=0.3828852549667978.
\]
\end{proof}

\section{Limits of product estimates}

{This section records two limitations for estimates obtained by replacing each entropy term by a product $\psi(x)\psi(y)$.  We first treat the inequality used in the proof of Theorem~\ref{thm:liu-bound}.  A broader but weaker restriction is given afterwards.}

Let $\Pi$ be a conditionally i.i.d.\ protocol, and let $\psi:[0,1]\to\mathbb R$ be continuous.  We will use the pointwise estimate
\begin{equation}
 H\bigl(\Pi_{1-x,1-y}(0,0)\bigr)
 \ge\psi(x)\psi(y)
 \qquad(0\le x,y\le1).
 \label{eq:product-lower-bound}
\end{equation}
Whenever \eqref{eq:product-lower-bound} holds, Lemma~\ref{lem:positive-correlation} gives, for every probability measure $P$ and every $\rho\in\cC_3(P)$,
\[
 \iint H\bigl(\Pi_{1-x,1-y}(0,0)\bigr)\,\rho(\dd x,\dd y)
 \ge\iint\psi(x)\psi(y)\,\rho(\dd x,\dd y)
 \ge\left(\int\psi\,\dd P\right)^2.
\]

\subsection{An upper bound for the inequality used in Theorem~\ref{thm:liu-bound}}

Whenever a function $\psi$ satisfies \eqref{eq:product-lower-bound}, taking $y=x$ gives $\psi(x)^2\le\log2$, while $x=y=1$ gives $\psi(1)=0$.  We therefore consider the following general form of the inequality used in Theorem~\ref{thm:liu-bound}.

Put
\[
 J(x):=xH(x)-\frac12H(x^2),
 \qquad
 \widehat m:=\frac12+\frac{\max_{0\le x\le1}J(x)}{\log2},
 \qquad
 \widehat c:=1-\widehat m.
\]

\begin{theorem}
\label{thm:ceiling}
Let $m>0$.  Suppose that, for some $N\ge1$, there are weights
\[
 w_0,w_1,\ldots,w_N\ge0,
 \qquad \sum_{j=0}^Nw_j=1,
\]
and continuous functions $\psi_j:[0,1]\to\mathbb R$, $1\le j\le N$, satisfying
\[
 \psi_j(1)=0,
 \qquad
 \psi_j(x)^2\le\log2\quad(0\le x\le1),
\]
such that every probability measure $P$ on $[0,1]$ satisfies
\begin{equation}
 w_0\iint H(xy)\,P(\dd x)P(\dd y)
 +\sum_{j=1}^Nw_j\left(\int\psi_j\,\dd P\right)^2
 \ge \frac{m(P)}m\int H\,\dd P.
 \label{eq:mw-certificate}
\end{equation}
Then $m\ge\widehat m$.  Consequently
\[
 1-m\le\widehat c=0.3828852599667\ldots.
\]
\end{theorem}

For Theorem~\ref{thm:liu-bound}, this corresponds to $N=1$, $m=m_*$, $w_0=\alpha$, $w_1=\beta$, and $\psi_1(x)=\sqrt{H(g(x))}$; Lemma~\ref{lem:S} gives \eqref{eq:mw-certificate}.

\begin{proof}[Proof of Theorem~\ref{thm:ceiling}]
Because \eqref{eq:mw-certificate} holds for every probability measure, fix $x\in[0,1]$ and take $P=\delta_x$, the Dirac probability measure assigning mass $1$ to $x$.  For this measure,
\[
 m(P)=x,\qquad
 \iint H(uv)\,P(\dd u)P(\dd v)=H(x^2),
\]
\[
 \int\psi_j\,\dd P=\psi_j(x),
 \qquad
 \int H\,\dd P=H(x).
\]
Substitution in \eqref{eq:mw-certificate}, followed by multiplication by $m>0$, gives
\begin{equation}
 m\left[
 w_0H(x^2)+\sum_{j=1}^Nw_j\psi_j(x)^2
 \right]\ge xH(x).
 \label{eq:point-mass-probe}
\end{equation}

We next obtain a lower bound on the independent weight $w_0$.  Fix $y\in(0,1)$ and, for $0<\eta<1$, set
\[
 P_\eta=(1-\eta)\delta_1+\eta\delta_y.
\]
Expanding the product measure $P_\eta\otimes P_\eta$ gives
\begin{align*}
 \iint H(uv)\,P_\eta(\dd u)P_\eta(\dd v)
 &=(1-\eta)^2H(1)
   +2\eta(1-\eta)H(y)
   +\eta^2H(y^2)\\
 &=2\eta(1-\eta)H(y)+\eta^2H(y^2).
\end{align*}
For each $j$,
\[
 \int\psi_j\,\dd P_\eta
 =(1-\eta)\psi_j(1)+\eta\psi_j(y)
 =\eta\psi_j(y),
\]
and also
\[
 m(P_\eta)=1-\eta(1-y),
 \qquad
 \int H\,\dd P_\eta=\eta H(y).
\]
Since $H(y)>0$, divide \eqref{eq:mw-certificate} by $\eta H(y)$.  The resulting inequality is
\begin{align*}
 &w_0\left(2(1-\eta)+\eta\frac{H(y^2)}{H(y)}\right)
 +\sum_{j=1}^Nw_j\eta\frac{\psi_j(y)^2}{H(y)}\\
 &\hspace{45mm}\ge
 \frac{1-\eta(1-y)}m.
\end{align*}
Letting $\eta$ tend to $0$ gives $2w_0\ge1/m$, hence
\begin{equation}
 w_0\ge\frac1{2m}.
 \label{eq:w0-floor}
\end{equation}

Return to \eqref{eq:point-mass-probe}.  Since $\psi_j(x)^2\le\log2$ and $\sum_{j=1}^Nw_j=1-w_0$,
\begin{align*}
 w_0H(x^2)+\sum_{j=1}^Nw_j\psi_j(x)^2
 &\le w_0H(x^2)+(1-w_0)\log2\\
 &=\log2+w_0\bigl(H(x^2)-\log2\bigr).
\end{align*}
The coefficient $H(x^2)-\log2$ is nonpositive.  Using \eqref{eq:w0-floor} therefore gives
\[
 w_0H(x^2)+\sum_{j=1}^Nw_j\psi_j(x)^2
 \le \log2+\frac{H(x^2)-\log2}{2m}.
\]
Combining this estimate with \eqref{eq:point-mass-probe} yields
\begin{align*}
 xH(x)
 &\le m\log2+\frac12\bigl(H(x^2)-\log2\bigr)\\
 &=\frac12H(x^2)+\left(m-\frac12\right)\log2.
\end{align*}
Thus
\[
 m\ge\frac12+\frac{xH(x)-\frac12H(x^2)}{\log2}
 =\frac12+\frac{J(x)}{\log2}
 \qquad(0\le x\le1).
\]
Taking the maximum over $x$ proves $m\ge\widehat m$, and hence $1-m\le\widehat c$.

It remains to justify the numerical value of $\widehat c$ stated in the theorem.  For $x\in(0,1)$,
\begin{align*}
 J'(x)&=-(1-x)\log(1-x)-x\log(1+x),\\
 J''(x)&=\log\frac{1-x}{1+x}+\frac1{1+x},\\
 J'''(x)&=\frac{x+3}{(x+1)^2(x-1)}<0.
\end{align*}
Thus $J''$ is strictly decreasing.  Moreover
\[
 \lim_{x\to0^+}J''(x)=1,
 \qquad
 \lim_{x\to1^-}J''(x)=-\infty,
\]
so $J''$ has a unique zero and $J'$ first increases and then decreases.  Since
\[
 \lim_{x\to0^+}J'(x)=0,
 \qquad
 \lim_{x\to1^-}J'(x)=-\log2<0,
\]
and $J''(x)>0$ for all sufficiently small $x>0$, we have $J'(x)>0$ for all sufficiently small $x>0$.  Therefore $J'$ has exactly one zero in $(0,1)$, and $J$ has a unique maximizer $\widehat x\in(0,1)$.  Proposition~\ref{prop:ceiling-certificate} gives rigorous intervals for $\widehat x$, $\widehat m$, and $\widehat c$, in particular
\[
 \widehat c=0.3828852599667\ldots.
\]
\end{proof}

For comparison with the bound in Theorem~\ref{thm:main}, Proposition~\ref{prop:ceiling-certificate} gives
\[
 0<\widehat c-(1-m_0)<5.1\cdot10^{-9}.
\]

\subsection{A broader product inequality}

Setting $y=x$ in \eqref{eq:product-lower-bound} gives
\begin{equation}
 \psi(x)^2
 \le H\bigl(\Pi_{1-x,1-x}(0,0)\bigr)
 \le\log2.
 \label{eq:general-diagonal-bound}
\end{equation}
At $x=1$, both Bernoulli marginals before complementation are $\operatorname{Ber}(0)$, the point mass at $0$.  There is therefore only one coupling of these marginals, namely the point mass at $(0,0)$; in particular $\Pi_{0,0}(0,0)=1$.  Substituting $x=1$ in \eqref{eq:general-diagonal-bound} gives
\begin{equation}
 \psi(1)^2\le H(1)=0,
 \qquad\text{hence}\qquad
 \psi(1)=0.
 \label{eq:general-psi-one}
\end{equation}

The function
\[
 x\longmapsto H(x)-\frac12H(x^2)-\frac12\log2
\]
is strictly decreasing on $(1/2,1)$, since its derivative is
\begin{align*}
 H'(x)-xH'(x^2)
 &=(1-x)\log\frac{1-x}{x}
   -x\log\frac{1+x}{x}<0.
\end{align*}
Its value at $x=1/2$ is positive, while its value at $x=1$ is $-\frac12\log2<0$.  Hence there is a unique $x_c\in(1/2,1)$ such that
\begin{equation}
 H(x_c)-\frac12H(x_c^2)=\frac12\log2.
 \label{eq:xc-def}
\end{equation}

{We finish with a restriction for a broader form of the product inequality, in which the right-hand side is multiplied by a constant $\gamma>1$ independent of $P$.}

\begin{proposition}
\label{prop:general-product-ceiling}
Let $m>0$ and $\gamma>1$.  Let $\Pi^{(1)},\ldots,\Pi^{(N)}$ be conditionally i.i.d.\ protocols, and let $\psi_j:[0,1]\to\mathbb R$ be continuous functions such that, for every $j=1,\ldots,N$,
\[
 H\bigl(\Pi^{(j)}_{1-x,1-y}(0,0)\bigr)
 \ge\psi_j(x)\psi_j(y)
 \qquad(0\le x,y\le1).
\]
Let
\[
 w_0,w_1,\ldots,w_N\ge0,
 \qquad \sum_{j=0}^Nw_j=1.
\]
Suppose that every probability measure $P$ on $[0,1]$ with $m(P)\ge m$ satisfies
\begin{equation}
 w_0\iint H(xy)\,P(\dd x)P(\dd y)
 +\sum_{j=1}^Nw_j\left(\int\psi_j\,\dd P\right)^2
 \ge\gamma\int H\,\dd P.
 \label{eq:general-product-certificate}
\end{equation}
Then $m>x_c$.  Consequently every threshold $c=1-m$ obtained from these hypotheses satisfies
\[
 c<1-x_c=0.40036117239\ldots.
\]
\end{proposition}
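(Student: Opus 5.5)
The plan is to repeat the probing argument from the proof of Theorem~\ref{thm:ceiling}, using the constant $\gamma>1$ in place of $m(P)/m$. The only change is that we may now test only measures satisfying $m(P)\ge m$. We argue by contradiction: suppose $m\le x_c$. (If $m\ge1$ there is nothing to prove, since $x_c<1$.)

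\emph{Step 1: a floor for the independent weight.} Fix $y\in(0,1)$ and put $P_\eta=(1-\eta)\delta_1+\eta\delta_y$. Then $m(P_\eta)=1-\eta(1-y)$, which is at least $m$ for all sufficiently small $\eta>0$ because $m<1$, so these measures are admissible in \eqref{eq:general-product-certificate}. By \eqref{eq:general-psi-one}, $\psi_j(1)=0$, and therefore $\int\psi_j\,\dd P_\eta=\eta\psi_j(y)$. The same expansion as in the proof of Theorem~\ref{thm:ceiling} gives $\iint H(uv)\,\dd P_\eta\dd P_\eta=2\eta(1-\eta)H(y)+\eta^2H(y^2)$ and $\int H\,\dd P_\eta=\eta H(y)$. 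Divide \eqref{eq:general-product-certificate} by $\eta H(y)>0$ and let $\eta\to0$. This yields $2w_0\ge\gamma>1$, so $w_0>1/2$.

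\emph{Step 2: a point-mass probe at $x_c$.} Since $x_c\ge m$, the Dirac measure $\delta_{x_c}$ is admissible, and \eqref{eq:general-product-certificate} gives
\[
 w_0H(x_c^2)+\sum_{j=1}^Nw_j\psi_j(x_c)^2\ge\gamma H(x_c).
\]
The diagonal bound \eqref{eq:general-diagonal-bound} gives $\psi_j(x_c)^2\le\log2$. Together with $\sum_{j\ge1}w_j=1-w_0$, the left side is therefore at most $\log2+w_0\bigl(H(x_c^2)-\log2\bigr)$. The bracket $H(x_c^2)-\log2$ is nonpositive and $w_0>1/2$, so this is at most $\tfrac12H(x_c^2)+\tfrac12\log2$. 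On the right, $H(x_c)>0$ because $x_c\in(1/2,1)$, so $\gamma H(x_c)>H(x_c)$. Combining the two bounds gives
\[
 H(x_c)<\tfrac12H(x_c^2)+\tfrac12\log2,
\]
which contradicts the defining equation \eqref{eq:xc-def}. Hence $m>x_c$, and so $c=1-m<1-x_c$.

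\emph{Step 3: the numerical value.} It remains to certify $1-x_c=0.40036117239\ldots$. The function in \eqref{eq:xc-def} is strictly monotone on $(1/2,1)$, so its root can be bracketed by two interval evaluations of opposite sign, in the style of the certificates in Appendix~A.

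The main point needing care is that strictness is available. In Theorem~\ref{thm:ceiling} only a non-strict inequality was obtained, but here the factor $\gamma>1$ together with $H(x_c)>0$ supplies the strict gap. The other point is admissibility of the test measures under the constraint $m(P)\ge m$: this holds for $P_\eta$ because its mass concentrates at $1$, and for $\delta_{x_c}$ because of the assumption $m\le x_c$.
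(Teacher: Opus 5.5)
Your proposal is correct and follows essentially the paper's proof. It uses the same two probes, $P_\eta=(1-\eta)\delta_1+\eta\delta_y$ (giving $2w_0\ge\gamma$) and $\delta_{x_c}$. The only cosmetic difference is where the strictness comes from: you use $w_0>1/2$ and obtain it from $\gamma H(x_c)>H(x_c)$, whereas the paper keeps $w_0\ge\gamma/2$, derives $1/\gamma\ge\frac12+\bigl(H(x)-\frac12H(x^2)\bigr)/\log2$ for every $x\ge m$, and then specializes to $x=x_c$.
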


\begin{proof}
Since $x_c<1$, the conclusion $m>x_c$ already holds when $m\ge1$.  We may therefore assume $0<m<1$.

Fix $y\in(0,1)$.  For every
\[
 0<\eta<\min\left\{1,\frac{1-m}{1-y}\right\},
\]
define
\[
 P_\eta=(1-\eta)\delta_1+\eta\delta_y,
\]
where $\delta_z$ denotes the Dirac probability measure assigning mass $1$ to $z$.  Then
\[
 m(P_\eta)=1-\eta(1-y)>m,
\]
so $P_\eta$ is among the measures to which \eqref{eq:general-product-certificate} applies.  Because $H(1)=0$, the independent term is exactly
\[
 \iint H(xy)\,P_\eta(\dd x)P_\eta(\dd y)
 =2\eta(1-\eta)H(y)+\eta^2H(y^2).
\]
For each $j$, the pointwise assumption in the proposition is \eqref{eq:product-lower-bound} with $(\Pi,\psi)=(\Pi^{(j)},\psi_j)$; hence \eqref{eq:general-psi-one} gives
\[
 \int\psi_j\,\dd P_\eta=\eta\psi_j(y).
\]
Finally,
\[
 \int H\,\dd P_\eta=\eta H(y).
\]
Since $H(y)>0$, divide \eqref{eq:general-product-certificate} by $\eta H(y)$.  Substituting these expressions gives
\[
 w_0\left(2(1-\eta)+\eta\frac{H(y^2)}{H(y)}\right)
 +\eta\sum_{j=1}^N w_j\frac{\psi_j(y)^2}{H(y)}
 \ge\gamma.
\]
Because $y$ is fixed and $H(y)>0$, all ratios in this display are finite.  Taking the one-sided limit $\eta\to0^+$,
\[
 \lim_{\eta\to0^+}
 w_0\left(2(1-\eta)+\eta\frac{H(y^2)}{H(y)}\right)=2w_0,
\]
and
\[
 \lim_{\eta\to0^+}
 \eta\sum_{j=1}^N w_j\frac{\psi_j(y)^2}{H(y)}=0.
\]
The right-hand side is the constant $\gamma$, so the limiting inequality is $2w_0\ge\gamma$.  Hence
\[
 w_0\ge\frac\gamma2.
\]

Now let $x\ge m$ and apply \eqref{eq:general-product-certificate} to $P=\delta_x$.  For each $j$, the same specialization of \eqref{eq:general-diagonal-bound} gives $\psi_j(x)^2\le\log2$.  Since $\sum_{j=1}^Nw_j=1-w_0$, we obtain
\[
 \gamma H(x)
 \le w_0H(x^2)+(1-w_0)\log2.
\]
Because $H(x^2)\le\log2$, the coefficient of $w_0$ in the right-hand side is $H(x^2)-\log2\le0$.  Hence increasing $w_0$ can only decrease that right-hand side.  Since $w_0\ge\gamma/2$, its value is at most the value obtained at $w_0=\gamma/2$, and therefore
\[
 \gamma H(x)
 \le\frac\gamma2H(x^2)
   +\left(1-\frac\gamma2\right)\log2.
\]
Move $\frac\gamma2H(x^2)$ to the left, divide by $\gamma\log2>0$, and then add $1/2$ to both sides.  This gives
\begin{equation}
 \frac1\gamma
 \ge\frac12+
 \frac{H(x)-\frac12H(x^2)}{\log2}.
 \label{eq:general-point-probe}
\end{equation}
At $x=x_c$, equation~\eqref{eq:xc-def} makes the right-hand side of \eqref{eq:general-point-probe} equal to $1$.  The left-hand side is strictly smaller than $1$ because $\gamma>1$.  Hence \eqref{eq:general-point-probe} cannot hold at $x_c$.  If $m\le x_c$, then $P=\delta_{x_c}$ satisfies $m(P)=x_c\ge m$ and must satisfy the inequality, a contradiction.  Therefore $m>x_c$.

Proposition~\ref{prop:general-product-numerics} gives
\[
 1-x_c\in[0.40036117239,\ 0.40036117240],
\]
which is the stated numerical bound.
\end{proof}

\appendix
\section{Computational results}

The goal of this appendix is to state, in mathematical form, every finite computation used above.  Propositions~A.1, A.3, A.5--A.8 use outward-rounded interval arithmetic with MPFR evaluation of transcendental functions; Propositions~A.2 and~A.4 are exact rational Bernstein calculations.  The computational supplement describes the arithmetic model, the finite covers, and the proposition-specific ancillary files; both the supplement and the complete ancillary package are available in the Zenodo record associated with this work~\cite{joint-zenodo}.

\begin{certificate}
\label{prop:two-construction-certificate}
The constants in Section~3 satisfy
\begin{align*}
 x_*&\in[0.69078759392498,\ 0.69078759392500],\\
 m_*&\in[0.61729091208117,\ 0.61729091208136],\\
 \beta&\in[0.1000525598619,\ 0.1000525598639],\\
 t_*&\in[0.36992289186769,\ 0.36992289186773].
\end{align*}
The same certificate verifies
\[
 c_{\mathrm L}>0.382709087918735.
\]
With $c_0=m_*\alpha$ and
\[
 \zeta(t):=2c_0(1-\log(2t))-1-e^t(-\log t+t),
\]
the scalar bounds used in Lemmas~\ref{lem:F-small-t} and~\ref{lem:F-boundary-tail} are
\begin{align*}
 \zeta(0.002)&\ge0.016672474402352748,\\
 \left(c_0-\frac12\right)G(0.004)-\frac12G(5\cdot10^{-5})
 &\ge0.0011788376979667077,\\
 \left(c_0-\frac12\right)G(2.8)-\frac12G(0.1)
 &\ge0.035598095197518845,\\
 c_0(9.2-e^{-8.2})-5.1
 &\ge0.010717682423498683.
\end{align*}
On the rectangle \eqref{eq:local-rectangle},
\begin{align*}
 F_{tt}&\ge0.13459156779665421,\\
 F_{dd}&\ge0.87181314699157286,\\
 F_{tt}F_{dd}-F_{td}^2&\ge0.050114365218749776.
\end{align*}
For $a<b$ and $\nu>0$, put
\[
 \mathcal R(a,b;\nu):=
 \{(t,\xi):a\le t\le b,\ 0\le\xi\le1,\ t(1-\xi)\ge\nu\}.
\]
Each of
\[
 \mathcal R(0.002,0.1;5\cdot10^{-5}),
 \quad
 \mathcal R(0.1,0.34;5\cdot10^{-5}),
\]
\[
 \mathcal R(0.34,0.40;5\cdot10^{-5}),
 \quad
 \mathcal R(0.40,1.4;5\cdot10^{-5}),
 \quad
 \mathcal R(1.4,4.1;0.1)
\]
has a finite cover by the boxes used in \eqref{eq:mean-value}.  Every box either maps into the local rectangle $\mathcal Q_0$ or has a positive mean-value lower bound.  The smallest lower bounds in the five regions, in the listed order, are
\[
 1.4683343626197839\cdot10^{-4},\quad
 1.2396204165999228\cdot10^{-6},\quad
 5.6012230503014744\cdot10^{-4},
\]
\[
 6.9620924720995344\cdot10^{-6},\qquad
 4.4688168746738737\cdot10^{-3}.
\]
\end{certificate}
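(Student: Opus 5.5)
The plan is to treat Proposition~\ref{prop:two-construction-certificate} as a finite list of assertions. Each one reduces to evaluating explicit elementary functions ($\log$, $\exp$, $\sqrt{\ }$, polynomials) over intervals with outward rounding. Every enclosure should be a guaranteed superset of the true range.

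\textbf{Constants.} I start with $x_*$. Since $p'>0$ on $(0,1]$, it suffices to check that $p$ has opposite signs at the two rational endpoints $0.69078759392498$ and $0.69078759392500$; this is exact rational arithmetic. Next I evaluate $m_*=x_*H(x_*)/H(x_*^2)$ in interval arithmetic on that enclosure. This gives the interval for $m_*$, hence $c_{\mathrm L}=1-m_*>0.382709087918735$. For $\beta$ I use formula \eqref{eq:alpha-beta}, with $A_0'(x)=2xH'(x^2)$ and $B_0'(x)=g'(x)H'(g(x))$ written out explicitly. The interval for $t_*=-\log x_*$ is immediate. I also check that the enclosure of the denominator $B_0'-A_0'$ excludes $0$, and that the resulting $\beta$-interval lies in $(0,1)$. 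Then $c_0=m_*(1-\beta)$ is enclosed. The four scalar bounds ($\zeta(0.002)$, the two $G$-differences, and the $t=4.1$ tail) are single interval evaluations. For them I use the closed form $G(u)=u+(e^u-1)(-\log(1-e^{-u}))$ from Lemma~\ref{lem:G-properties}. Near $u=5\cdot10^{-5}$ I would evaluate at high MPFR precision to control the cancellation in $1-e^{-u}$, or switch to $\log(1-e^{-u})=\log(u)+\log((1-e^{-u})/u)$ with a series.

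\textbf{Hessian on $\mathcal Q_0$.} Here I evaluate the explicit formulas for $F_{tt}$, $F_{dd}$, $F_{td}$ derived in Lemma~\ref{lem:F-global}, using interval inputs. These need $G',G''$, and $\Gamma_5,\Gamma_5',\Gamma_5''$ expressed through $H,H',H''$ at $z=g(e^{-u})$. A single box is too coarse, so I subdivide $\mathcal Q_0$ into a grid of small subboxes. On each subbox I bound the three quantities, and then take the minima over the grid. Because $v=t-d\ge0.25$ on $\mathcal Q_0$, every argument stays away from the singular point $u=0$, so the enclosures are tame.

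\textbf{Box covers.} For each region $\mathcal R(a,b;\nu)$ I generate boxes in $(t,\xi)$ adaptively by bisection. On a box I first test whether its image under $d=t\xi$ lies in $\mathcal Q_0$. Otherwise I compute a rigorous lower bound for $\widetilde F(t_0,\xi_0)$, together with interval upper bounds for $|\widetilde F_t|$ and $|\widetilde F_\xi|$ over the box, and apply \eqref{eq:mean-value}. If the result is not positive, I bisect the box and repeat. Boxes must also cover the curved constraint $t(1-\xi)\ge\nu$, so I allow boxes to extend slightly past it while keeping $v\ge\nu/2>0$ inside each box. This keeps $G'$ and $\Gamma_5'$ bounded there. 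The final certificate records the minimum over accepted boxes.

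The main obstacle is the cover near the contact point and near the small-$v$ edge. Close to $(t_*,0)$ the function $F$ vanishes quadratically, so the mean-value bounds fail unless the boxes avoid $\mathcal Q_0$. Those boxes then sit in a thin annulus where $F$ is only of order $10^{-6}$, as the recorded minimum $1.24\cdot10^{-6}$ in the region $[0.1,0.34]$ reflects. Similarly, near $v=5\cdot10^{-5}$ the derivative $G'(v)$ blows up like $-\log v$. I would handle both by refining adaptively and tightening the derivative bounds with monotonicity ($G'$ is decreasing, by $G''<0$) rather than naive interval evaluation.
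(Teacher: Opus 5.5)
Your route is the paper's: outward-rounded interval arithmetic with MPFR, the explicit second-derivative formulas evaluated on a subdivision of $\mathcal Q_0$, and adaptive mean-value covers of the five regions $\mathcal R(a,b;\nu)$. Those parts are fine in principle.

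\textbf{The step that fails.} As written, the step ``this gives the interval for $m_*$, hence $c_{\mathrm L}>0.382709087918735$'' does not go through.

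\begin{itemize}
\item The stated enclosure of $m_*$ only yields $c_{\mathrm L}=1-m_*\ge 1-0.61729091208136=0.38270908791864$, which is below the target.
\item In fact $1-0.382709087918735=0.617290912081265$ is exactly the midpoint of that enclosure. So the claim asserts that $c_{\mathrm L}$ lies in the upper half of its own certified interval.
\item The true margin is tiny. A hand evaluation with $x_*=0.690787593924988014\ldots$, cross-checked two ways, gives $c_{\mathrm L}\approx 0.38270908791873503$. The margin is therefore only about $3\cdot10^{-17}$.
\item Your $x_*$ enclosure is too coarse for this. The derivative of $x\mapsto xH(x)/H(x^2)$ at $x_*$ is about $-0.021$, and your left endpoint $0.69078759392498$ lies about $8\cdot10^{-15}$ below $x_*$. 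Evaluating $m_*$ over that enclosure therefore gives an upper bound near $m_*+1.7\cdot10^{-16}$, even in exact arithmetic. That is several times too weak.
\end{itemize}

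\textbf{How to fix it.} The inequality for $c_{\mathrm L}$ needs its own, much sharper certification.
\begin{itemize}
\item Shrink the enclosure of $x_*$ by several more orders of magnitude. Exact rational sign checks of $p$ at points about $10^{-18}$ apart, or an interval Newton step, would do.
\item Evaluate $xH(x)/H(x^2)$ in MPFR at a precision well beyond double. A margin of $3\cdot10^{-17}$ is below the spacing of binary64 numbers near $0.38$ and near $0.62$.
\end{itemize}

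\textbf{Minor slips (validity unaffected).}
\begin{itemize}
\item On $\mathcal Q_0$ you only have $v\ge t_*-0.12\approx0.2499$, not $0.25$.
\item The figure $1.24\cdot10^{-6}$ is the least accepted mean-value lower bound, not the size of $F$ next to $\partial\mathcal Q_0$. There $F$ is of order $10^{-4}$; for example $F(t_*\pm0.06,0)\approx\tfrac12F_{tt}(0.06)^2$. So your diagnosis of where the cover is hardest is off.
\end{itemize}
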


\begin{certificate}
\label{prop:bernstein-certificate}
After the rescaling $x=(43/64)s$, $y=(43/64)t$, the polynomial $WUV-(1-r)$ from Lemma~\ref{lem:WUV} has bidegree $(3,3)$.  In the product Bernstein basis $\{B_i^3(s)B_j^3(t):0\le i,j\le3\}$ its coefficient matrix is
\[
\begin{pmatrix}
7 & \frac{41}{12} & \frac{8221}{6144} & \frac{20653}{131072}\\[1mm]
\frac{41}{12} & \frac{18803}{9216} & \frac{402175}{393216} & \frac{1894129}{6291456}\\[1mm]
\frac{8221}{6144} & \frac{402175}{393216} & \frac{52130497}{75497472} & \frac{587913113}{1610612736}\\[1mm]
\frac{20653}{131072} & \frac{1894129}{6291456} & \frac{587913113}{1610612736} & \frac{6389233321}{17179869184}
\end{pmatrix}.
\]
Every coefficient is strictly positive.
\end{certificate}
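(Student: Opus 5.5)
The statement is Proposition~\ref{prop:bernstein-certificate}, an exact rational computation. The plan is to expand $WUV-(1-r)$ symbolically, rescale, convert to the Bernstein basis, and read off the sixteen coefficients.

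\textbf{Step 1: expand the polynomial.} Write $P(x,y):=WUV-1+r$ with
\[
W=2-x-y+xy,\quad U=2-2x-y+2xy,\quad V=2-x-2y+2xy,\quad r=xyW.
\]
Each of $W,U,V$ has degree at most one in $x$ and at most one in $y$. Hence $WUV$ has bidegree at most $(3,3)$, and $r=xyW$ has bidegree $(2,2)$. So $P$ has bidegree $(3,3)$. I expand $P$ in the monomial basis as $P=\sum_{a,b\le3}p_{ab}x^ay^b$, with exact integer coefficients.

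\textbf{Step 2: rescale.} Substitute $x=\kappa s$ and $y=\kappa t$ with $\kappa=43/64$. This gives monomial coefficients $q_{ab}=p_{ab}\kappa^{a+b}$, which are rationals whose denominators are powers of $2$.

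\textbf{Step 3: change to the Bernstein basis.} Use the standard conversion in each variable: $s^a=\sum_{i\ge a}\binom ia\binom3a^{-1}B_i^3(s)$. The Bernstein coefficients are then
\[
c_{ij}=\sum_{a\le i}\sum_{b\le j}\frac{\binom ia\binom jb}{\binom3a\binom3b}\,q_{ab}.
\]

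\textbf{Checks and conclusion.} Several entries can be verified by hand.
\begin{itemize}
\item $c_{00}=P(0,0)=8-1=7$, which matches.
\item $c_{33}=P(\kappa,\kappa)$, which should equal $6389233321/2^{34}$.
\item The edge entries $c_{i0}$ come from $P(\kappa s,0)=(2-x)(2-2x)(2-x)-1$.
\end{itemize}
Symmetry of $P$ in $(x,y)$ explains why the matrix is symmetric. Once every $c_{ij}$ is found to be a positive rational, positivity is immediate.

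The main obstacle is only bookkeeping accuracy. I would do the computation in exact rational arithmetic, either with a computer algebra system or by careful hand calculation. As a cross-check, I would confirm that $\sum_{ij}c_{ij}B_i^3(s)B_j^3(t)$ reproduces $P$ at a few rational points.
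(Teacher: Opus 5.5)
Your proposal is correct and is essentially the paper's approach. The paper treats this proposition as an exact rational Bernstein calculation: expand $WUV-(1-r)$, rescale by $43/64$, and convert to the degree-$(3,3)$ product Bernstein basis with the standard formula you wrote. Your checks are consistent with the stated matrix, including $c_{00}=7$, $c_{33}=P(\kappa,\kappa)=6389233321/2^{34}$, the edge entries from $(2-x)^2(2-2x)-1$, and the symmetry argument.
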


\begin{certificate}
\label{prop:ceiling-certificate}
The unique maximizer $\widehat x$ of $J$ satisfies
\[
 \widehat x\in
 [0.69090773872540,\ 0.69090773872542].
\]
The corresponding constants satisfy
\[
 \widehat m\in
 [0.617114740033122,\ 0.617114740033283],
\]
\[
 \widehat c\in
 [0.382885259966717,\ 0.382885259966878].
\]
In particular,
\[
 0<
 \widehat c-0.3828852549667978
 <5.1\cdot10^{-9}.
\]
\end{certificate}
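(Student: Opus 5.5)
The plan is a finite outward-rounded interval computation. It uses only facts already established in the proof of Theorem~\ref{thm:ceiling}: $J'$ has exactly one zero $\widehat x$ in $(0,1)$, with $J'>0$ to its left and $J'<0$ to its right, and $\widehat x$ is the unique maximizer of $J$. The work therefore splits into three steps: enclose $\widehat x$ by a sign change of $J'$, enclose $J(\widehat x)$, and propagate the result to $\widehat m$ and $\widehat c$.

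\emph{Enclosing $\widehat x$.} Let $a=0.69090773872540$ and $b=0.69090773872542$. I will evaluate
\[
 J'(x)=-(1-x)\log(1-x)-x\log(1+x)
\]
at the exact rational endpoints $a$ and $b$, using MPFR with outward rounding at high precision (say $200$ bits). The target is a certified lower bound $J'(a)>0$ and a certified upper bound $J'(b)<0$. Because $J'$ changes sign exactly once, this gives $\widehat x\in[a,b]$. The main risk is cancellation: near the root, $|J'|$ is only of order $|J''(\widehat x)|\cdot10^{-14}$. With $J''(\widehat x)\approx\log\frac{1-\widehat x}{1+\widehat x}+\frac1{1+\widehat x}\approx-0.42$, the endpoint values are about $10^{-15}$. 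At $200$ bits the enclosures are of width $\approx10^{-55}$, so the signs are decided with a large margin.

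\emph{Enclosing $J(\widehat x)$.} First compute an interval $I_J\ni J(x)$ for all $x\in[a,b]$. The naive interval extension of $J(x)=xH(x)-\frac12H(x^2)$ over $[a,b]$ already has width of order $10^{-14}$, which is fine. A sharper enclosure is also available: $J(\widehat x)\in[J(a),J(a)+\sup_{[a,b]}|J'|\,(b-a)]$, and since $|J'|\lesssim10^{-14}$ on $[a,b]$, the correction is about $10^{-28}$. Either route is acceptable.

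\emph{Propagating to $\widehat m$ and $\widehat c$.} Next form
\[
 \widehat m\in\tfrac12+I_J/\log2,\qquad \widehat c\in1-\widehat m
\]
with outward rounding, and check that the resulting intervals lie inside the stated $[0.617114740033122,\,0.617114740033283]$ and $[0.382885259966717,\,0.382885259966878]$. For the final claim, subtract the exact rational $0.3828852549667978$ from the $\widehat c$-interval. The lower end is about $4.99999\cdot10^{-9}>0$ and the upper end is below $5.1\cdot10^{-9}$, which gives the two strict inequalities.

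\emph{Main obstacle.} The only delicate step is the sign certification of $J'$ at the endpoints, because of the cancellation described above. If the chosen precision were ever insufficient, I would raise the precision or rewrite $J'$ with $\operatorname{log1p}$ to avoid loss of significance. Once the root bracket is certified, everything else is routine interval propagation.
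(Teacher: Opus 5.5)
Your plan matches the paper's approach: Proposition~\ref{prop:ceiling-certificate} is an outward-rounded MPFR interval computation that brackets $\widehat x$ using the sign structure of $J'$ established in the proof of Theorem~\ref{thm:ceiling}, encloses $J$ on the bracket, and propagates the result to $\widehat m=\frac12+J(\widehat x)/\log2$ and $\widehat c=1-\widehat m$. There are two slips, neither of which affects validity. First, $J''(\widehat x)\approx-1.11$, not $-0.42$. Second, the naive interval extension of $J$ over $[a,b]$ has width about $1.1\cdot10^{-13}$, not $10^{-14}$. That is comparable to $\log2$ times the width of the stated $\widehat m$-interval, so a differently ordered naive evaluation could fail the containment check by a hair; your sharper enclosure $J(\widehat x)\in[J(a),\,J(a)+\sup_{[a,b]}|J'|\,(b-a)]$ is the one to use.
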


\begin{certificate}
\label{prop:tuned-bernstein}
Let $\tau_0=1985281/4000000$ and
\[
 W=1+\tau_0(1-x)(1-y).
\]
After the rescaling $x=(45/64)s$, $y=(45/64)t$, the polynomial
\[
 W\bigl[1+\tau_0(1-2x)(1-y)\bigr]
  \bigl[1+\tau_0(1-x)(1-2y)\bigr]
 -\tau_0(1-r_{\ell_0}(x,y))
\]
has bidegree $(3,3)$ and all $16$ product-Bernstein coefficients are strictly positive.  Their minimum is the exact rational number
\[
 \frac{4652778828628843621308587}
 {8388608000000000000000000}
 =0.5546544585977606\ldots.
\]
Since
\[
 (r_{\ell_0})_x
 =y\bigl[1+\tau_0(1-2x)(1-y)\bigr],
 \qquad
 (r_{\ell_0})_y
 =x\bigl[1+\tau_0(1-x)(1-2y)\bigr],
\]
this implies
\[
 W(r_{\ell_0})_x(r_{\ell_0})_y
 \ge \tau_0xy\bigl(1-r_{\ell_0}(x,y)\bigr)
\]
on $[0,45/64]^2$.
\end{certificate}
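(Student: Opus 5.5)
The statement is a computational certificate: exact rational Bernstein coefficients of an explicit polynomial, plus the inequality they imply. I would verify it the same way Proposition~\ref{prop:bernstein-certificate} was verified, entirely in exact rational arithmetic.

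\textbf{Step 1: set up the polynomial.} Write $\kappa=45/64$ and substitute $x=\kappa s$, $y=\kappa t$ into
\[
 Q(x,y):=W\bigl[1+\tau_0(1-2x)(1-y)\bigr]\bigl[1+\tau_0(1-x)(1-2y)\bigr]-\tau_0\bigl(1-r_{\ell_0}(x,y)\bigr).
\]
Each of $W$ and the two bracketed factors has degree at most $1$ in $x$ and at most $1$ in $y$, so the triple product has bidegree $(3,3)$. Also $r_{\ell_0}=xyW$ has bidegree $(2,2)$. Hence $Q$ has bidegree at most $(3,3)$. Expand it exactly over $\mathbb Q$ to obtain the monomial coefficients $a_{ij}$ of $s^it^j$.

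\textbf{Step 2: convert to the product Bernstein basis.} Use the standard formula
\[
 b_{kl}=\sum_{i\le k,\,j\le l}\frac{\binom ki\binom lj}{\binom3i\binom3j}\,a_{ij}.
\]
Check that all $16$ values $b_{kl}$ are positive rationals, and identify the minimum exactly as the stated fraction. This is a finite exact computation, and I expect no conceptual obstacle. The only care needed is keeping $\tau_0=1985281/4000000$ and the powers of $45/64$ exact, which is why the denominator is $2^{23}\cdot10^{18}$-like.

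\textbf{Step 3: deduce the inequality.} Since $B^3_k(s)B^3_l(t)\ge0$ on $[0,1]^2$ and these basis functions sum to $1$, we get $Q\ge\min b_{kl}>0$ on $[0,1]^2$. Rescaling back gives $Q\ge0$ on $[0,45/64]^2$. Finally, substitute the derivative formulas for $(r_{\ell_0})_x$ and $(r_{\ell_0})_y$, which follow by direct differentiation of $xyW$ (as already computed in Lemma~\ref{lem:tuned-product}). Then
\[
 W(r_{\ell_0})_x(r_{\ell_0})_y=xy\,W\bigl[1+\tau_0(1-2x)(1-y)\bigr]\bigl[1+\tau_0(1-x)(1-2y)\bigr].
\]
Multiplying $Q\ge0$ by $xy\ge0$ therefore gives $W(r_{\ell_0})_x(r_{\ell_0})_y\ge\tau_0xy(1-r_{\ell_0})$ on $[0,45/64]^2$, as claimed.

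The main obstacle is only the reliability of the exact expansion. I would guard against errors in two ways. First, recompute with an independent computer algebra system. Second, sanity-check a corner: at $s=t=0$ the coefficient $b_{00}=Q(0,0)=(1+\tau_0)^3-\tau_0$ must be positive.
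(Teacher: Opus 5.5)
Your proposal is correct and is essentially the paper's argument: expand the rescaled polynomial exactly over $\mathbb Q$ in the degree-$(3,3)$ product Bernstein basis, check that all $16$ coefficients are positive, and then multiply by $xy\ge0$ after inserting $(r_{\ell_0})_x(r_{\ell_0})_y=xy[\cdots][\cdots]$. As a further sanity check, the stated minimum is the corner coefficient $b_{03}=b_{30}$. This is the value $(1+19\tau_0/64)^2(1-13\tau_0/32)-\tau_0$ at $(x,y)=(0,45/64)$, and its denominator is exactly $2^{41}5^{18}=2^{23}\cdot10^{18}$.
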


\begin{certificate}
\label{prop:tuned-one-dimensional}
With $G,W_0,m_0,\beta_0$ as in {Section~6}, one has, for every $v>0$,
\[
 W_0(v)\ge\frac85 G(v),
 \qquad
 G(16v)\ge\frac{178}{25}G(v).
\]
The three scalar margins used in the analytic cases of Lemma~\ref{lem:tuned-second} satisfy
\begin{align*}
 \frac{8\beta_0}{5}\sqrt{\frac{178}{25}}-\frac1{2m_0}
 &\ge2.8908190756\cdot10^{-6},\\
 \beta_0W_0(2)-\frac1{2m_0}
 &\ge0.4053615705664,\\
 8\beta_0\log2-\frac1{2m_0}
 &\ge0.2421300730523.
\end{align*}
For each of the first two global inequalities, the finite middle interval $[0.001,8]$ is covered by outward-rounded interval boxes, with respectively $233$ and $243$ terminal cells and no unresolved cell.  The complementary intervals $(0,0.001]$ and $[8,\infty)$ are covered by the analytic estimates on $(0,0.001]$ and $[8,\infty)$ documented in the computational supplement.
\end{certificate}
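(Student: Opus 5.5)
The plan is to treat the two global one-variable inequalities and the three scalar margins separately, since the scalars are just point evaluations. For the margins $\frac{8\beta_0}{5}\sqrt{178/25}-\frac1{2m_0}$, $\beta_0W_0(2)-\frac1{2m_0}$ and $8\beta_0\log2-\frac1{2m_0}$, I would first compute $\alpha_0$ and $\beta_0$ exactly from \eqref{eq:tuned-m} and \eqref{eq:tuned-weights} as rationals. Then I would evaluate each expression in outward-rounded MPFR interval arithmetic and read off the lower endpoint. $W_0(2)=e^{4}H(g_{\ell_0}(e^{-2}))$ involves only $\exp$ and $\log$ at a single point. The only delicate item is the first margin, which is of order $10^{-6}$. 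Since $\beta_0$ and $m_0$ are exact rationals, modest working precision (say 200 bits) leaves a huge safety factor.

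For the inequalities $W_0(v)\ge\frac85G(v)$ and $G(16v)\ge\frac{178}{25}G(v)$, I would split $(0,\infty)$ into $(0,0.001]$, $[0.001,8]$ and $[8,\infty)$.

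\emph{Small-$v$ tail.} Use $G(v)\le v+ve^v(v-\log v)$, i.e.\ \eqref{eq:G-small-upper}, together with a matching lower bound $G(u)\ge u(1-\log u)$ as in \eqref{eq:G-small-lower}. The factor $16$ then gives
\[
 G(16v)/G(v)\approx 16\log(1/16v)/\log(1/v),
\]
which is about $9.3$ at $v=0.001$ and increases to $16$ as $v\to0$; this comfortably exceeds $178/25=7.12$. For $W_0$, put $r_v:=1-g_{\ell_0}(e^{-v})$. The estimates in Case~3 of Lemma~\ref{lem:tuned-second} give $2v\le e^{2v}r_v$ and $r_v\le 2v$, hence
\[
 W_0(v)\ge 2v\log\frac1{2v}.
\]
Comparing this with $\frac85G(v)\le\frac85 v(1+e^v(v-\log v))$ reduces the tail to the one-variable inequality
\[
 2\log\tfrac1{2v}\ge\tfrac85\bigl(1+e^v(v-\log v)\bigr),
\]
in which the $\log(1/v)$ coefficients are $2$ versus $8/5$. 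This holds for $v\le0.001$ by monotonicity plus one endpoint check.

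\emph{Large-$v$ tail.} Use $v+1-e^{-v}\le G(v)\le v+1$ from Lemma~\ref{lem:G-properties}. Then
\[
 G(16v)/G(v)\ge(16v+1-e^{-16v})/(v+1),
\]
which is increasing in $v$ and about $14.3$ at $v=8$. For $W_0$, write $q=g_{\ell_0}(e^{-v})$ and use $H(q)\ge q\log(1/q)$. Since $q\ge e^{-2v}$ and $q\le(1+\tau_0)e^{-2v}$, this gives
\[
 W_0(v)\ge 2v-\log(1+\tau_0),
\]
and $2v-\log(1+\tau_0)\ge\frac85(v+1)$ once $v\ge8$.

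\emph{Middle interval.} On $[0.001,8]$ I would run adaptive bisection with outward-rounded interval evaluation of $W_0(v)-\frac85G(v)$ and of $G(16v)-\frac{178}{25}G(v)$. For $G$, use the closed form $u+(e^u-1)(-\log(1-e^{-u}))$ rather than the series. Boxes where naive interval enclosure is too wide would be handled with a first-order mean-value bound using interval enclosures of $G'$ and $W_0'$ (formulas from Section~6).

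The main obstacle is this middle interval, and specifically the second inequality. I expect the constant $178/25$ to sit close to the true minimum of $G(16v)/G(v)$ near the transition $v\approx0.1$–$1$, so many cells and the mean-value refinement will be needed there. The dependency problem in $e^u-1$ times $-\log(1-e^{-u})$ near the left endpoint is the other place where precision and subdivision must be increased.
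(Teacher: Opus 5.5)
Your plan has the same structure as the paper's certificate. You evaluate the three scalar margins directly in outward-rounded arithmetic, handle $(0,0.001]$ and $[8,\infty)$ analytically, and bisect adaptively on $[0.001,8]$. Three of your four tail estimates are fine: both tails of $G(16v)\ge\frac{178}{25}G(v)$, and the large-$v$ tail $W_0(v)\ge 2v-\log(1+\tau_0)\ge\frac85(v+1)$.

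The step that fails is the small-$v$ tail of $W_0(v)\ge\frac85G(v)$. Your reduced inequality $2\log\frac1{2v}\ge\frac85\bigl(1+e^v(v-\log v)\bigr)$ is false at $v=0.001$. There the left side is $2\log500\approx12.43$ and the right side is about $12.67$. The difference (left minus right) is decreasing in $v$ and behaves like $\frac25\log\frac1v-\bigl(\frac85+2\log2\bigr)$. So it is positive only for $v$ below roughly $5.6\cdot10^{-4}$. Your endpoint check would fail, and $(5.6\cdot10^{-4},10^{-3}]$ is left uncovered. The inequality itself is not in danger there, since the true ratio $W_0/G$ is about $1.82$.

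The loss comes from using $H(r)\ge r\log\frac1r$. The discarded term $(1-r)\log\frac1{1-r}\approx r_v\approx 2v$ is exactly what supplies the constant in $W_0(v)\approx 2v\bigl(1+\log\frac1{2v}\bigr)$. Without it, your advantage of $2-\frac85$ in the coefficient of $\log\frac1v$ must overcome a constant deficit of about $3$.

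The repair is cheap. Use $-(1-r)\log(1-r)\ge r(1-r)$, so that $H(r)\ge r\bigl(\log\frac1r+1-r\bigr)$. The bracket is decreasing in $r$, and you already have $r_v\le 2v$ and $e^{2v}r_v\ge 2v$, so
\[
 W_0(v)\ge 2v\Bigl(\log\frac1{2v}+1-2v\Bigr).
\]
Now consider $2\bigl(\log\frac1{2v}+1-2v\bigr)-\frac85\bigl(1+e^v(v-\log v)\bigr)$. Its derivative is
\[
 \frac{\frac85e^v-2}{v}-4-\frac85e^v\bigl(1+v-\log v\bigr),
\]
which is negative on $(0,0.001]$ because $\frac85e^v<2$ there. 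At $v=0.001$ it equals about $14.43-12.67>0$, which closes the tail on all of $(0,0.001]$. Alternatively, you could lower the cut point to about $5\cdot10^{-4}$ and extend the interval bisection. That would no longer match the middle interval $[0.001,8]$ named in the statement.
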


\begin{certificate}
\label{prop:tuned-contact}
Let
\[
 t_c=0.369781418829901,
 \qquad h=\frac1{20},
\]
and let
\[
 \mathcal B=[t_c-h,t_c+h]\times[-h,h].
\]
The Hessian of the function
\[
 (t,d)\longmapsto F_0(t-d,t+d)
\]
is positive definite throughout $\mathcal B$.  More precisely, an outward-rounded interval subdivision with $824$ terminal cells, $823$ splits, and maximum depth $10$ gives
\[
 \partial_{tt}F_0(t-d,t+d)\ge0.08943365573651689
\]
and a Hessian determinant at least
\[
 0.0012824495103952169
\]
on every terminal cell.  At $(t_c,0)$,
\[
 F_0(t_c,t_c)\ge1.1386421250314525\cdot10^{-8},
\]
and the absolute value of the derivative in the $t$ direction is at most
\[
 1.2656542480726788\cdot10^{-14}.
\]
The derivative in the $d$ direction is zero there by evenness.  The tangent inequality for a differentiable convex function therefore gives, throughout $\mathcal B$,
\[
 F_0(t-d,t+d)\ge1.1386420617487399\cdot10^{-8}>0.
\]
\end{certificate}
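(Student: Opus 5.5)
The plan is to prove the statement by a rigorous interval computation organised around the convexity argument already used for $\mathcal Q_0$ in Lemma~\ref{lem:F-global}. Put $\Phi(t,d):=F_0(t-d,t+d)$, with $F_0$ as in \eqref{eq:tuned-F}. On $\mathcal B$ we have $t-d\ge t_c-2h>0.26$, so both arguments $s=t-d$ and $\sigma=t+d$ are bounded away from $0$. Hence $G$, $W_0$ and $\sqrt{W_0}$ are real-analytic there, and all derivatives are given by closed formulas. The function $\Phi$ is even in $d$, because $F_0$ is symmetric in its two arguments. This is why $\mathcal B$ may use $[-h,h]$ in $d$, and it is also why $\Phi_d(t_c,0)=0$.

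\textbf{Step 1: derivative formulas.} First I would write $\Phi_{tt},\Phi_{dd},\Phi_{td}$ explicitly. This repeats the computation of $F_{tt},F_{dd},F_{td}$ in Lemma~\ref{lem:F-global}, with two changes: the factor $m_*\alpha$ becomes $\alpha_0$, and $\Gamma_5$ becomes $\Omega:=\sqrt{W_0}$.
\begin{itemize}
\item For $G'$ and $G''$, use $G'(u)=G(u)-H'(e^{-u})$ and $G''(u)=G'(u)-1/(1-e^{-u})$.
\item For $\Omega$, set $z=e^{-u}$ and $q=g_{\ell_0}(z)$. Then $\Omega(u)=e^{u}\sqrt{H(q)}$, and its first two derivatives are the $\Gamma_5$ formulas with $q'=-zg_{\ell_0}'(z)$ and $q''=zg_{\ell_0}'(z)+z^2g_{\ell_0}''(z)$.
\end{itemize}
Every ingredient is a composition of $\exp$, $\log$, $\sqrt{\cdot}$ and polynomials in $z$. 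All of these are available in MPFR with directed rounding, and $q$ stays inside $(0,1)$ on $\mathcal B$.

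\textbf{Step 2: Hessian enclosure by adaptive bisection.} Next I would subdivide $\mathcal B$ adaptively. On each cell I evaluate interval enclosures of $\Phi_{tt}$ and of $\Phi_{tt}\Phi_{dd}-\Phi_{td}^2$. A cell is accepted when both lower endpoints are positive. Otherwise I split the cell along its longer side, repeating up to a fixed depth.

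Termination then gives the stated minima over all terminal cells: $\Phi_{tt}\ge0.0894\ldots$ and determinant $\ge0.00128\ldots$. So the Hessian is positive definite on the convex set $\mathcal B$, and $\Phi$ is convex there.

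\textbf{Step 3: value and slope at the centre.} At the single point $(t_c,0)$, I would evaluate $\Phi(t_c,0)=F_0(t_c,t_c)$ and $\Phi_t(t_c,0)$ in high precision with outward rounding. Because $\Phi$ is essentially zero here, this needs many guard digits. It is also essential that $\alpha_0,\beta_0,m_0,\tau_0$ enter as exact rationals, since otherwise their rounding would swamp the $10^{-8}$ margin.

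\textbf{Step 4: tangent inequality.} Convexity gives, for $(t,d)\in\mathcal B$,
\[
\Phi(t,d)\ge\Phi(t_c,0)+\Phi_t(t_c,0)(t-t_c)\ge\Phi(t_c,0)-h\,|\Phi_t(t_c,0)|.
\]
The term $\Phi_d(t_c,0)\,d$ is absent because that derivative vanishes by evenness. Plugging in the certified numbers gives the stated lower bound $1.13864206\ldots\cdot10^{-8}>0$.

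\textbf{Expected obstacle.} The hard part is Step~2, because of the dependency problem of interval arithmetic. The Hessian entries are differences of nearly cancelling terms: the $-\tfrac12(G''(s)+G''(\sigma))$ part against the $\Omega\Omega''$ and $\Omega'\Omega'$ products. The determinant is also small relative to its individual products, with $\approx0.0013$ against entries of size $\approx0.1$–$1$. Naive enclosures on large cells will therefore straddle zero.

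I would first try plain bisection with centred (mean-value) forms for each entry. If the depth blows up, I would precompute the one-variable functions $G^{(k)}$ and $\Omega^{(k)}$ on the $s$- and $\sigma$-intervals separately before combining them, which reduces overestimation. A depth of about $10$, roughly $800$ cells, should then suffice.
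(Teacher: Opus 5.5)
Your plan is correct and is essentially the paper's certificate. It uses an adaptive outward-rounded interval subdivision of $\mathcal B$ certifying $\Phi_{tt}>0$ and $\Phi_{tt}\Phi_{dd}-\Phi_{td}^2>0$ on every terminal cell, a high-precision enclosure of $\Phi(t_c,0)$ and $\Phi_t(t_c,0)$ with $\Phi_d(t_c,0)=0$ by evenness, and the convex tangent inequality $\Phi\ge\Phi(t_c,0)-h\,|\Phi_t(t_c,0)|$ on $\mathcal B$.

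One small slip in Step~1: transcribing the formulas of Lemma~\ref{lem:F-global} requires two further coefficient changes besides $m_*\alpha\to\alpha_0$ and $\Gamma_5\to\sqrt{W_0}$. You must also replace $m_*\beta$ by $\beta_0$, and the coefficient $\tfrac12$ of the $G$-terms by $\tfrac1{2m_0}$.
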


\begin{certificate}
\label{prop:tuned-compact}
Let
\[
 \mathcal Q=[2^{-11},17]\times[0,15/17].
\]
There is a finite axis-parallel subdivision of $\mathcal Q$ with the following property.  Every terminal cell is either contained in
\[
 \{(t,\xi):|t-t_c|\le h,\ 0\le t\xi\le h\},
\]
so that Proposition~\ref{prop:tuned-contact} applies with $d=t\xi$, or satisfies
\[
 F_0(t_0(1-\xi_0),t_0(1+\xi_0))
 -\sup_R|\partial_t\{F_0(t(1-\xi),t(1+\xi))\}|\,\Delta_t
 -\sup_R|\partial_\xi\{F_0(t(1-\xi),t(1+\xi))\}|\,\Delta_\xi
 >0
\]
for the cell
\[
 R=[t_0-\Delta_t,t_0+\Delta_t]
 \times[\xi_0-\Delta_\xi,\xi_0+\Delta_\xi].
\]
The supplied cover has
\[
 666\text{ mean-value terminal cells},\qquad
 15\text{ cells covered by }\mathcal B,\qquad
 553\text{ splits},
\]
with maximum depth $8$ and no unresolved cell.  The least positive mean-value lower bound is
\[
 5.2266673845026352\cdot10^{-8}.
\]
\end{certificate}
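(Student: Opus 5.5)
The statement is a finite computational assertion, so the plan is to describe the verification procedure and argue that any run of it which terminates with the stated counts is a rigorous proof. Put
\[
\Phi(t,\xi):=F_0\bigl(t(1-\xi),t(1+\xi)\bigr),
\]
with $F_0$ as in \eqref{eq:tuned-F}. I would run an adaptive bisection of $\mathcal Q=[2^{-11},17]\times[0,15/17]$, starting from a coarse grid. Each cell $R$ is processed as follows.

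First, decide exactly (in rational arithmetic) whether $R$ is contained in $\{|t-t_c|\le h,\ 0\le t\xi\le h\}$. Since $t\xi$ is monotone in each variable for $t,\xi\ge0$, it suffices to check the corner with largest $t$ and $\xi$. If $R$ is contained, it is recorded as covered by $\mathcal B$, and Proposition~\ref{prop:tuned-contact} applies.

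Otherwise, compute three quantities with outward-rounded MPFR interval arithmetic:
\begin{itemize}
\item a lower enclosure of $\Phi(t_0,\xi_0)$ at the center;
\item interval enclosures of $\partial_t\Phi$ and $\partial_\xi\Phi$ over all of $R$.
\end{itemize}
The cell is accepted if the mean-value expression in the statement is strictly positive. If not, it is split along its longer scaled side. This inequality is exactly \eqref{eq:mean-value} for $\Phi$, so acceptance certifies $\Phi>0$ on $R$.

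The ingredients needed to make the interval enclosures valid are the following.
\begin{itemize}
\item \emph{Enclosures for $G$.} On $u>0$, $G$ is increasing (Lemma~\ref{lem:G-properties}), so $G$ is enclosed by its endpoint values. These are computed from $G(u)=u+(e^u-1)[-\log(1-e^{-u})]$, using outward-rounded $\exp$ and $\log$.
\item \emph{Derivatives of $G$.} Use $G'(u)=G(u)-H'(e^{-u})$ from the proof of Lemma~\ref{lem:F-global}.
\item \emph{Enclosures for $W_0$.} $W_0$ is increasing, as shown in the proof of Lemma~\ref{lem:tuned-second}, and its derivative is given by the explicit formula $W_0'(v)=\bigl(2H(q)-zq_zH'(q)\bigr)/z^2$ with $q=g_{\ell_0}(z)$, $z=e^{-v}$. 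Here $q$ and $zq_z$ are polynomials with exact rational coefficients involving $\tau_0$.
\item \emph{Chain rule.} With $s=t(1-\xi)$ and $\sigma=t(1+\xi)$,
\[
\partial_t\Phi=(1-\xi)\partial_sF_0+(1+\xi)\partial_\sigma F_0,
\qquad
\partial_\xi\Phi=t(\partial_\sigma F_0-\partial_sF_0).
\]
\item \emph{The square-root term.} The derivative of $\beta_0\sqrt{W_0(s)W_0(\sigma)}$ is $\tfrac{\beta_0}{2}\sqrt{W_0(\sigma)/W_0(s)}\,W_0'(s)$ in $s$, and symmetrically in $\sigma$. This is well defined on $\mathcal Q$ because $s\ge 2^{-11}\cdot 2/17>0$ there, so $W_0(s)$ is bounded away from zero.
\end{itemize}
All constants $m_0,\alpha_0,\beta_0,\tau_0$ are exact rationals, so they enter the interval computation without rounding error.

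For termination and the recorded counts, I would log each split, each accepted cell with its margin, and each cell sent to $\mathcal B$. The proposition then reads off from the log:
\begin{itemize}
\item the number of terminal cells of each type;
\item the maximum depth;
\item the minimum margin, $5.2266673845026352\cdot10^{-8}$.
\end{itemize}
Since the terminal cells form a partition of $\mathcal Q$ produced by bisection, their union is $\mathcal Q$ with no unresolved cell.

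The main obstacle is obtaining positive margins in two delicate zones. The first is near small $t$: there $F_0=O(t\log(1/t))$, while $\partial_\xi\Phi$ carries logarithmic factors from $G'$ and $W_0'$, so cells must shrink roughly in proportion to $t$. I would therefore bisect geometrically in $t$ near $2^{-11}$ rather than uniformly. The second is the neighbourhood of the near-contact point $(t_c,0)$, where $F_0$ is only of order $10^{-8}$. There I would rely on $\mathcal B$ swallowing every cell within distance $h$. This works because the boundary of $\mathcal B$ lies a distance $h=1/20$ from $(t_c,0)$, so by the Hessian bound of Proposition~\ref{prop:tuned-contact} the function already has quadratic growth of order $10^{-3}$ there, which is comfortably resolvable. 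If margins nonetheless remain tight, the remedy is more subdivision together with the high MPFR precision needed for $\Phi(t_0,\xi_0)$ to be resolved well beyond $10^{-8}$.
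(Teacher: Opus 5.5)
Your proposal takes the same approach as the paper. Proposition~\ref{prop:tuned-compact} is a computer-assisted certificate: outward-rounded MPFR interval evaluation on an adaptive axis-parallel subdivision of $\mathcal Q$, with cells inside the image of $\mathcal B$ under $d=t\xi$ discharged by Proposition~\ref{prop:tuned-contact} and every other cell by the mean-value bound \eqref{eq:mean-value}, as you describe.

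Two small caveats. First, the specific counts and the minimum margin are properties of the supplied cover; they are consistent with $128$ initial cells refined by bisection, since $666+15=128+553$. They are therefore confirmed by rechecking that cover, not by a run with your own splitting rules. Second, constants such as $\tau_0$ and $1/(2m_0)$ are not dyadic, so in MPFR they must enter as outward-rounded enclosures rather than ``without rounding error''.
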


\begin{certificate}
\label{prop:general-product-numerics}
Let $x_c$ be the unique solution in $(1/2,1)$ of
\[
 H(x)-\frac12H(x^2)=\frac12\log2.
\]
Directed interval evaluation gives
\[
 x_c\in[0.59963882760,
        0.59963882761]
\]
and hence
\[
 1-x_c\in[0.40036117239,
          0.40036117240].
\]
In particular,
\[
 1-x_c<0.40036117241.
\]
\end{certificate}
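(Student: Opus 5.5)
This statement is a computational certificate, so the proof consists in describing a rigorous directed-rounding evaluation rather than an analytic argument. I will evaluate the defining equation $\Phi(x):=H(x)-\tfrac12H(x^2)-\tfrac12\log2$ with outward-rounded interval arithmetic (MPFR at, say, 200 bits) at the two endpoints $a=0.59963882760$ and $b=0.59963882761$.

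First I will confirm that $a,b\in(1/2,1)$. Both are exact decimal rationals, so they can be represented as tight intervals.

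Next I will compute enclosures of $\Phi(a)$ and $\Phi(b)$, with $H(u)=-u\log u-(1-u)\log(1-u)$ evaluated by interval $\log$. The goal is to check that the enclosure of $\Phi(a)$ lies strictly in $(0,\infty)$ and that of $\Phi(b)$ strictly in $(-\infty,0)$.

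Since Section~7 already proved that $\Phi$ is strictly decreasing on $(1/2,1)$, these sign conditions force the unique zero $x_c$ into $(a,b)\subset[a,b]$. The enclosure for $1-x_c$ then follows by exact subtraction, giving $[0.40036117239,0.40036117240]$. The final strict inequality $1-x_c<0.40036117241$ is immediate from this.

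The only real obstacle is precision. The interval has width $10^{-11}$, and $\Phi'(x)$ is of moderate size (about $(1-x)\log\frac{1-x}{x}-x\log\frac{1+x}{x}\approx-1$), so $|\Phi(a)|$ and $|\Phi(b)|$ are both of order $10^{-11}$ or less. They could be much smaller if an endpoint happens to lie very close to $x_c$. To separate them from zero I will use working precision well beyond double, and I will verify that the output interval radius is far below $|\Phi|$. If one endpoint sign turns out to be unresolved, I would first increase the precision. Only if $x_c$ actually lies outside the stated interval would the certificate need to be restated, which a preliminary high-precision Newton iteration would detect beforehand.
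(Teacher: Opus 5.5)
Your proposal is correct and is essentially the paper's argument: the paper certifies this statement by outward-rounded MPFR interval evaluation, and Section~7 supplies the existence, uniqueness and strict decrease of $H(x)-\frac12H(x^2)-\frac12\log2$ on $(1/2,1)$, so a certified sign change at the two decimal endpoints places $x_c$ in the stated interval and the bounds for $1-x_c$ follow by exact subtraction. Your precision caution is justified but not a real obstacle: $x_c\approx0.5996388276017$ lies only about $1.7\cdot10^{-12}$ above the lower endpoint, and the derivative there is about $-0.75$ rather than $-1$, so the value at the lower endpoint is roughly $1.3\cdot10^{-12}$, which any interval evaluation at or beyond double precision resolves comfortably.
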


\section*{Acknowledgments}
Preliminary versions of parts of this work appeared separately in~\cite{costa-preprint,sadhu-note}.

\section*{Declaration of generative AI and AI-assisted technologies in the writing process}
ChatGPT (OpenAI) was used as an aid for exploratory symbolic calculations, proof checking, drafting and checking computer code, and language editing.  The mathematical statements, proofs, numerical choices, and computer-assisted certificates remain the responsibility of the authors.

\end{document}